\def\l@subsection{\@tocline{2}{0pt}{2.5pc}{5pc}{}}
\DeclareSymbolFont{largesymbol}{OMX}{yhex}{m}{n}
\DeclareMathAccent{\Widehat}{\mathord}{largesymbol}{"62}
\newcommand*\di{\mathop{}\!\mathrm{d}}
\numberwithin{equation}{section}              % numerazionedelleequazioni
\newtheorem{theorem}{Theorem}[section]
\newtheorem{lemma}[theorem]{Lemma}
\newtheorem{proposition}[theorem]{Proposition}
\newtheorem*{proposition*}{Proposition}
\newtheorem*{corollary*}{Corollary}
\newtheorem{claim}{Claim}[section]
\newtheorem{definition}[theorem]{Definition}
\newtheorem*{definitions*}{Definitions}
\newtheorem*{acknowledgements*}{Acknowledgements}
\newtheorem*{conjecture*}{\bf Conjecture}
\newtheorem{example}{\bf Example}[section]
\newtheorem*{example*}{\bf Example}
\theoremstyle{remark}
\newtheorem{remark}[theorem]{\bf Remark}
\begin{document}
\date{\today}                                     %TITOLO\date\today

\author[Yu Gao \and Hao Liu \and Tak Kwong Wong]
{Yu Gao$^{1}$\quad Hao Liu$^{2}$
	\quad Tak Kwong Wong$^{3}$}
\thanks{${}^1$ Department of Applied Mathematics, The Hong Kong Polytechnic University, Hung Hom, Kowloon, Hong Kong.
(Email: mathyu.gao@polyu.edu.hk)}

\thanks{${}^2$ School of Mathematical Sciences and Institute of Natural Sciences, Shanghai Jiao Tong University, Shanghai, China. (Email: mathhao.liu@sjtu.edu.cn)}

\thanks{${}^3$ Department of Mathematics, The University of Hong Kong, Pokfulam, Hong Kong. (Email: takkwong@maths.hku.hk)}
	
%\author{Yu Gao}
%\address[Y. Gao]{Department of Applied Mathematics, The Hong Kong Polytechnic University, Hung Hom, Kowloon, Hong Kong}
%\email{mathyu.gao@polyu.edu.hk}
%
%\author{Hao Liu}
%\address[H. Liu]{School of Mathematical Sciences and Institute of Natural Sciences, Shanghai Jiao Tong University, Shanghai, China}
%\email{mathhao.liu@sjtu.edu.cn}
%
%\author{Tak Kwong Wong}
%\address[T. K. Wong]{Department of Mathematics, The University of Hong Kong, Pokfulam, Hong Kong.}
%\email{takkwong@maths.hku.hk}

\title[Asymptotic behavior to Hunter-Saxton eqt]{Asymptotic behavior of conservative solutions to the Hunter-Saxton equation}

\maketitle

\begin{abstract}
In this paper we study the large time asymptotic behavior of (energy) conservative solutions to the Hunter-Saxton equation in a generalized framework that consists of the evolutions of solution and its energy measure. We describe the large time asymptotic expansions of the conservative solutions, and rigorously verify the validity of the leading order term in $L^{\infty}(\mathbb{R})$ and ${\dot{H}}^1(\mathbb{R})$ spaces respectively. The leading order term is given by a kink-wave that is determined by the total energy of the system only. As a corollary, we also show that the singular part of the energy measure converges to zero, as the time goes to either positive or negative infinity. Under some natural decay rate assumptions on the tails of the initial energy measure, 
we rigorously provide
the optimal error estimates in $L^{\infty}(\mathbb{R})$  and ${\dot{H}}^1(\mathbb{R})$.
% the optimal growth rate estimates for the error term  in $L^{\infty}(\mathbb{R})$,  and the optimal decay rate estimates for the error term in ${\dot{H}}^1(\mathbb{R})$.  
As the time goes to infinity, the pointwise convergence and pointwise growth rate for the solution are also obtained under the same assumptions on the initial data.
The proofs of our results rely heavily on the elaborate analysis of the generalized characteristics designed for the measure-valued initial data, and explicit formulae for conservative solutions.

\end{abstract}
{\small
\keywords{\textbf{{Keywords}:} asymptotic behavior of characteristics, energy measure, generalized framework, integrable system, kink-wave}
}
{
\hypersetup{linkcolor=blue}
\tableofcontents
}

\section{Introduction}\label{sec:intro}
In this paper, we study the large time asymptotic behavior of (energy) conservative solutions to the Hunter-Saxton equation on the whole real line $\mathbb{R}$: for any $x$, $t\in\mathbb{R}$, {the unknown $u$ satisfies}
\begin{align}\label{eq:HS}
u_t+uu_x=\frac{1}{2} \int_{-\infty}^x u_y^2 (y, t)\di y.
\end{align}
This integrable equation was first proposed by Hunter and Saxton \cite{hunter1991dynamics} to study a nonlinear instability in the director field of a nematic liquid crystal. Formally, one has the following energy conservation law:
\begin{align}\label{eq:conservationlaw}
(u_x^2)_t+(uu_x^2)_x=0.
\end{align}
According to Equation~\eqref{eq:conservationlaw}, it is natural to seek for the global-in-time (energy) conservative solution $u$ that satisfies $\|u_x(\cdot,t)\|_{L^2}=\|\bar{u}_x\|_{L^2}$ for all time $t\in\mathbb{R}$, provided that the initial data $u|_{t=0}=\bar{u}$ satisfies $\bar{u}_x\in L^2(\mathbb{R})$. However, some simple examples with explicit solution formulae show that $\|u_x(\cdot,t)\|_{L^2}$ is not conserved due to the potential blow up of the solutions; see \cite{bressan2007asymptotic,hunter1995nonlinear}. To describe the evolution of the total energy, an extra measure variable $\mu$ for the energy was introduced; see \cite{Bressan2010,antonio2019lipschitz} for instance. Hence, we consider the following  generalized framework\footnote{See also \cite[Eqs. (1.4)-(1.6)]{gao2021regularity} for instance.}:
\begin{align}
&u_t+uu_x=\frac{1}{2}\int_{-\infty}^x\di \mu(t),\label{eq:gHS1}\\
&\mu_t+(u\mu)_x=0,\label{eq:gHS2}\\
&\di \mu_{ac}(t)=u_x^2(x,t)\di x,\label{eq:gHS3}
\end{align} 
for any $x$, $t\in\mathbb{R}$.  Here, the energy measure $\mu(t)$ is a nonnegative Radon measure, and $\mu_{ac}(t)$ is the absolutely continuous part of $\mu(t)$ with respect to the Lebesgue measure $\di x$. Equation \eqref{eq:gHS2} is just Equation \eqref{eq:conservationlaw} with $u_x^2\di x$ replaced by $\di\mu$. Equation~\eqref{eq:gHS3} is a compatibility condition between $\mu$ and $u_x^2$, which means that $u$ only contributes to the absolutely continuous part of $\mu$. In \eqref{eq:gHS1}, the integral $\int_{-\infty}^x\di \mu(t)$ is not clearly well-defined when $\mu(t)$ contains pure point measures; however, it was shown in authors' previous work \cite{gao2021regularity} that there are  at most countably many times $t$ such that  the energy measure $\mu(t)$ can have pure point measures, and hence, the ambiguity of this integral will not affect the definition of the weak/conservative solutions; see Definition~\ref{def:weak} below for details. Therefore, we will keep this integral notation in \eqref{eq:gHS1}. Equation \eqref{eq:gHS2} is viewed in the sense of distributions. It is then natural to consider the  conservative solutions $(u,\mu)$ to the generalized framework \eqref{eq:gHS1}-\eqref{eq:gHS3} in the following space $\mathcal{D}$:
\begin{definition}\label{def:D}
Let $\mathcal{D}$ be the set of all pairs $(u,\mu)$ satisfying
\begin{enumerate}
\item[(i)] $u\in C_b(\mathbb{R})$, $u_x\in L^2(\mathbb{R})$;
\item[(ii)] $\mu\in \mathcal{M}_+(\mathbb{R})$;
\item[(iii)] $\di \mu_{ac}=u_x^2\di x$, where  $\mu_{ac}$ is the absolutely continuous part of measure $\mu$ with respect to the Lebesgue measure $\di x $.
\end{enumerate}
Here, $C_b(\mathbb{R})$ is the space of continuous and bounded functions defined on $\mathbb{R}$ equipped with the sup-norm, and $\mathcal{M}_+(\mathbb{R})$ stands for the set of all finite nonnegative Radon measures endowed with the weak topology. 
\end{definition}
The global existence and uniqueness among with many other properties of conservative solutions to the generalized framework \eqref{eq:gHS1}-\eqref{eq:gHS3} have been  established in \cite{gao2021regularity} via a generalized characteristics method; see also Theorem \ref{thm:main} below. Moreover, for any initial data in $\mathcal{D}$, the explicit formula of characteristics for conservative solutions was also obtained in \cite{gao2021regularity}; see also \eqref{eq:xbarbeta} below for instance. 
For conservative solutions, some Lipschitz metrics were introduced in \cite{Bressan2010,antonio2019lipschitz} for the stability with respect to initial data.  
In \cite{bressan2007asymptotic}, Bressan, Zhang and Zheng studied the following more general model: for any $x>0$, $t\in\mathbb{R}$, 
\[
u_t+g(u)_x=\frac{1}{2}\int_0^xg''(u)u_x^2\di x.
\]
Here, the flux $g$ is a given function with a Lipschitz continuous second-order derivative such that $g''(0)>0$. When $g(u)=u^2/2$, the above equation is just  the Hunter-Saxton equation. They obtained the global-in-time existence and uniqueness of conservative solutions on the half real line by using  the  method of characteristics for compactly supported initial energy measure $\bar{\mu}$. 
Except for conservative solutions, the Hunter-Saxton equation also has a class of weak solutions called dissipative solutions that dissipate the energy; for instance, see \cite{bressan2005global,hunter1995nonlinear1,hunter1995nonlinear,zhang1998oscillations,zhang1999existence,zhang2000existence} for the global-in-time existence, and  \cite{cieslak2016maximal,dafermos2011generalized,dafermos2012maximal} for the uniqueness of dissipative solutions via  characteristics methods. 

The prime objective of this paper is to study the large time asymptotic behavior of conservative solutions from different aspects.
%To achieve this objective, we will make use of the explicit formulae
%, which were previously obtained by authors in \cite{gao2021regularity} by using the generalized framework, 
%for the generalized characteristics and solutions. 
The large time behavior of solutions to the Hunter-Saxton equation was first conjectured in \cite{hunter1991dynamics} by Hunter and Saxton; see Remark \ref{rmk:conjecture} below. In \cite{hunter1995nonlinear1}, Hunter and Zheng obtained the asymptotic behavior  in $\dot{H}^1(\mathbb{R})$ for dissipative solutions with compactly supported BV initial data; see \cite[Theorem 5.1]{hunter1995nonlinear1} for details. In the same paper, they also stated that the conservative solutions converge to kink-wave solutions\footnote{For an example of kink-wave solution, see Remark \ref{rmk:kinkwave} below.} asymptotically in ${\dot{H}}^1$; 
however, the relation between these kink-wave solutions and their initial data is not that clear, since they do not have the uniqueness of conservative solutions and the kink-wave solution does not have a well defined initial data in their framework\footnote{The major issue is that one requires the total energy $\bar{\mu}(\mathbb{R})$ to uniquely determine a particular kink-wave solution, but the singular part of the initial energy measure, namely $\bar{\mu}_{s}$, which is needed to define the initial value of the kink-wave is missing in the framework used in \cite{hunter1995nonlinear1}. The kink-wave solution does have a well-defined initial data in the generalized framework that we use in this paper; see Remark~\ref{rem:initial data for kinkwave} below for further discussion.}.
In this paper, we will prove this convergence for any initial data in $\mathcal{D}$ by using the explicit formulae for conservative solutions given by \cite{gao2021regularity}. 
Moreover, in \cite{hunter1995nonlinear1}, the asymptotic behavior of $u$ is not that clear; for example, what is the asymptotic expansion of $u$, say in $L^{\infty}(\mathbb{R})$? For any fixed $x\in\mathbb{R}$, does the pointwise limit  $\lim\limits_{t \to \pm\infty}u(x,t)$ exist? If the limit exists, will we be able to find the explicit form of the limit? We will address all these questions in Theorem \ref{thm:mainthm1} and Theorem \ref{thm:mainthm3} below. In addition, we will also provide the optimal estimates for the error terms in the asymptotic expansions; see Theorem \ref{thm:mainthm2} below. To the best of our knowledge, the large time pointwise behavior of conservative solutions, the asymptotic expansion in $L^\infty(\mathbb{R})$, and the optimal error estimates are completely new  and have never been mentioned in the literature. 

The asymptotic behavior of conservative solutions of the Hunter-Saxton equation is closely related to its self-similar solutions and scaling property: for any global smooth solution $u:=u(x,t)$ to the Hunter-Saxton equation \eqref{eq:gHS1} with $\di \mu(t)=u_x^2(x,t) \di x$ for all $t\in\mathbb{R}$, and any non-zero constant $\lambda$, the re-scaled function $u_{\lambda}(x,t):=\frac{1}{\lambda}u(\lambda^2 x, \lambda t)$  is also a solution to the Hunter-Saxton equation \eqref{eq:gHS1} with $\di \mu_{\lambda}(t)=((u_{\lambda})_x)^2(x,t) \di x$ ; a special class of solutions that are invariant under this scaling property\footnote{That is, $u_{\lambda}(x,t)=u(x,t)$ for any $\lambda\neq 0$. In particular, by choosing $\lambda:=2/t$, we have $u(x,t) = \frac{t}{2}u\left(\frac{4x}{t^2}, 2\right)=:\frac{t}{2}v\left(\frac{4x}{t^2}\right)$; equivalently, let  $y:=\frac{4x}{t^2}$, then $v(y):=\frac{2}{t}u\left(\frac{t^2}{4}y, t\right)$ for these self-similar solutions. In other words, the values of these self-similar solutions are completely determined by their values at a particular time, such as $t=2$.} is the self-similar solutions given by $u(x,t) := \frac{t}{2}v\left(\frac{4x}{t^2}\right)$ for some smooth function $v:\mathbb{R}\to\mathbb{R}$. A general belief is that the self-similar solutions to physical systems, such as the inviscid Burgers' equation and hyperbolic systems of conservation laws, may shed light on the large time asymptotic behavior of generic solutions, and hence, we expect that for a generic conservative solution $u$ to the Hunter-Saxton equation, we actually have $u(x,t)  =\frac{t}{2}v\left(\frac{4x}{t^2}\right) + o(t)$ in $L^{\infty}(\mathbb{R})$ as $t\to\pm\infty$, provided that the rescaled   limit $v(y) := \lim_{t\to\pm \infty}\frac{2}{t}u\left(\frac{t^2}{4}y, t\right)$ exists. In other words, if the rescaled limit function $v$ exists, the function $\frac{t}{2}v\left(\frac{4x}{t^2}\right)$ should serve as the leading order term in the large time asymptotic expansion.

As an example, let us consider the conservative solution $u$ to the Hunter-Saxton equation subject to the initial data $(\bar{u},\bar{\mu})=(k,\ell\delta_{x_0})$, where $\ell>0$,  $k$, $x_0\in\mathbb{R}$ are three constants, and $\delta_{x_0}$ is the Dirac delta mass at the spatial location $x=x_0$. It is worth noting that the limit $\lim\limits_{t\to\pm \infty}\frac{2}{t}{u\left(\frac{t^2}{4}x,t\right)}$ depends only on the total energy $\ell$, but not on $k$ and $x_0$; see Example \ref{exm:k} in Appendix \ref{app} for the verification. This also suggests that $v$ depends on the initial total energy only; in other words, the rescaled limit is more related to the total initial energy $\bar{\mu}(\mathbb{R})$ than the initial data $\bar{u}$. This is indeed true, namely we will show that the limit $v$ is completely determined by $\bar{\mu}(\mathbb{R})$ and unrelated to the specific form of $\bar{u}$, 
%Furthermore, the limit $v$ will be explicitly computed by using the solution coming from the initial data $(0,\bar{\mu}(\mathbb{R})\delta)$
and the asymptotic expansions in $L^{\infty}(\mathbb{R})$ and $\dot{H}^1(\mathbb{R})$ will also be obtained by using this $v$. More precisely, we actually have:
\begin{theorem}\label{thm:mainthm1}
Let $(u(t),\mu(t))$ be a conservative solution to the generalized framework \eqref{eq:gHS1}-\eqref{eq:gHS3}  of the Hunter-Saxton equation subject to the initial data $(\bar{u},\bar{\mu})\in\mathcal{D}$, in the sense of Definition~\ref{def:weak}.
Then
\begin{equation}\label{eq:v}
v(x):=\lim\limits_{t\to\pm\infty} \frac{2}{t}{u\left(\frac{t^2}{4}x,t\right)} = 
\left\{
\begin{aligned}
&0, \quad x< 0,\\
&x, \quad 0\le x\le \bar{\mu}(\mathbb{R}),\\
&\bar{\mu}(\mathbb{R}), \quad x > \bar{\mu}(\mathbb{R}).
\end{aligned}
\right.
\end{equation}
Furthermore, we have the following asymptotic expansions: 
\begin{align}\label{eq:Linfty}
u(x,t) = \frac{t}{2} v\left(\frac{4x}{t^2}\right)+o(t) ~\textrm{in}~ {L^\infty}(\mathbb{R}),\quad\mbox{as }t\to\pm\infty,
\end{align}
and
\begin{align}\label{eq:L2}
u_x(x,t) = \partial_x\left[\frac{t}{2} v\left(\frac{4x}{t^2}\right)\right]  + o(1)~\textrm{in}~ {L^2}(\mathbb{R}),\quad\mbox{as }t\to\pm\infty.
\end{align}
As a direct consequence, we also have
\begin{align}\label{eq:singular part}
\lim_{t\to\pm\infty} {\mu}_{s}(t)(\mathbb{R})=0,
\end{align}
where ${\mu}_{s}(t)$ is the singular part of the energy measure ${\mu}(t)$ with respect to the Lebesgue measure.
\end{theorem}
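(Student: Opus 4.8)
The plan is to push everything through the explicit characteristic formula recalled in \eqref{eq:xbarbeta} (equivalently Theorem~\ref{thm:main}). Translating it to the cumulative‑energy variable $\beta\in[0,\bar\mu(\mathbb R)]$ — with $\bar x(\beta)$ the generalized inverse of $x\mapsto\bar\mu((-\infty,x])$ and $\bar u(\beta):=\bar u(\bar x(\beta))$ — the $\beta$‑characteristic sits at $\bar x(t,\beta)=\bar x(\beta)+t\,\bar u(\beta)+\tfrac{t^2}{4}\beta$ at time $t$, and there $u(\bar x(t,\beta),t)=\bar u(\beta)+\tfrac{t}{2}\beta$; moreover $u(\cdot,t)$ is spatially constant to the left of $\bar x(t,0)$, to the right of $\sup_\beta\bar x(t,\beta)$ when finite, and across each gap of $\su\bar\mu$ (on which $\bar u$ is locally constant). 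After the rescaling $y:=4x/t^2$ and $w(y,t):=\tfrac{2}{t}\,u(\tfrac{t^2}{4}y,t)$ one has the identity $\big\|u(\cdot,t)-\tfrac{t}{2}v(4\cdot/t^2)\big\|_{L^\infty}=\tfrac{|t|}{2}\,\|w(\cdot,t)-v\|_{L^\infty}$, so \eqref{eq:v} and \eqref{eq:Linfty} both follow once we prove $w(\cdot,t)\to v$ in $L^\infty(\mathbb R)$ as $t\to+\infty$; the case $t\to-\infty$ reduces to $t\to+\infty$ via the symmetry $u(x,t)\mapsto-u(x,-t)$, which solves the same equation and carries the same energy measure.

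For the uniform convergence I would argue as follows. When $(\tfrac{t^2}{4}y,t)$ lies on the $\beta$‑characteristic, the relation $y=\beta+\tfrac{4\bar x(\beta)}{t^2}+\tfrac{4\bar u(\beta)}{t}$ rearranges to $\bar x(\beta)=\tfrac{t^2}{4}(y-\beta)-t\,\bar u(\beta)$ and gives $w(y,t)=\beta+\tfrac{2\bar u(\beta)}{t}$; since $\bar u$ is bounded, it suffices to show $\beta\to v(y)$ uniformly in $y$. The key quantitative fact is exactly $\bar x(\beta)=\tfrac{t^2}{4}(y-\beta)-t\bar u(\beta)$: if $\beta$ were $\delta$‑far from $v(y)$ on the relevant side, this forces $\bar x(\beta)$ to have magnitude of order $t^2\delta$, and since $\bar\mu$ is a \emph{finite} measure its mass on $(-\infty,-\tfrac{t^2}{4}\delta+t\|\bar u\|_{L^\infty}]$ and on $[\tfrac{t^2}{4}\delta-t\|\bar u\|_{L^\infty},\infty)$ both tend to $0$ at a rate depending only on $\delta$ and $\bar\mu$ — pinning $\beta$ (which agrees with $\bar\mu((-\infty,\bar x(\beta)])$ up to the usual atom conventions) within $o_t(1)$ of $0$ or of $\bar\mu(\mathbb R)$, a contradiction. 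Thus on the middle branch $0\le y\le\bar\mu(\mathbb R)$ one gets $\beta\to y=v(y)$, while when $y$ is negative, or exceeds $\bar\mu(\mathbb R)$, or lies in the image of a gap, or lies beyond the extreme characteristics (where $u$ is spatially constant), the same pinching drives $\beta$ — hence $w(y,t)$ — to the flat value $0$ or $\bar\mu(\mathbb R)$ of $v$ there. Collecting the cases gives $\|w(\cdot,t)-v\|_{L^\infty}\to0$.

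Finally, \eqref{eq:L2} and \eqref{eq:singular part} follow cheaply from energy conservation, once the pointwise values $w(0,t)\to0$ and $w(\bar\mu(\mathbb R),t)\to\bar\mu(\mathbb R)$ are in hand. Put $w_t:=\partial_x\big[\tfrac{t}{2}v(4x/t^2)\big]=\tfrac{2}{t}\,\mathbf 1_{[0,\,t^2\bar\mu(\mathbb R)/4]}$, so $\|w_t\|_{L^2}^2=\bar\mu(\mathbb R)$, while by \eqref{eq:gHS3} and conservation of the total mass of $\mu$ one has $\|u_x(\cdot,t)\|_{L^2}^2=\mu_{ac}(t)(\mathbb R)=\bar\mu(\mathbb R)-\mu_s(t)(\mathbb R)$. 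The cross term telescopes, $\langle u_x(\cdot,t),w_t\rangle_{L^2}=\tfrac{2}{t}\big[u(\tfrac{t^2}{4}\bar\mu(\mathbb R),t)-u(0,t)\big]=\bar\mu(\mathbb R)+o(1)$, so expanding $\|u_x(\cdot,t)-w_t\|_{L^2}^2$ gives $-\mu_s(t)(\mathbb R)+o(1)$; non‑negativity of the left‑hand side forces $\mu_s(t)(\mathbb R)\to0$, which is \eqref{eq:singular part}, and feeding this back gives $\|u_x(\cdot,t)-w_t\|_{L^2}\to0$, i.e.\ \eqref{eq:L2}.

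I expect the main obstacle to be precisely this uniformity in the $L^\infty$ statement when $\su\bar\mu$ is unbounded: any \emph{fixed} characteristic has $\bar x(\beta)=O(1)$, but for a given large $t$ there can coexist characteristics with $\bar x(\beta)$ as large as $\sim t^2$, and a crude estimate then only yields an $O(t)$ — rather than $o(t)$ — error. Overcoming this needs the quantitative tail trade‑off above (largeness of $\bar x(\beta)$ bought at the cost of $\beta$ being near an endpoint of $[0,\bar\mu(\mathbb R)]$, hence near a flat branch of $v$), supplemented by careful bookkeeping of the atoms of $\bar\mu$ (where $\bar x(\cdot)$ is constant but $\bar x(t,\cdot)$ still strictly increases, so $u(\cdot,t)$ is genuinely non‑constant across them) and of the gaps of $\su\bar\mu$ (where $u(\cdot,t)$ is constant in $x$ and the gap's image in the $y$‑variable must be located relative to $[0,\bar\mu(\mathbb R)]$).
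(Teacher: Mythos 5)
Your proposal is correct. For the rescaled limit \eqref{eq:v} and the $L^\infty$ expansion \eqref{eq:Linfty} you are doing essentially what the paper does: your cumulative-energy parameter $\beta$ is the paper's $\alpha-\bar x(\alpha)$, your relation $y=\beta+\frac{4\bar x(\beta)}{t^2}+\frac{4\bar u(\beta)}{t}$ is the paper's \eqref{eq:charater relation}, and your pinching mechanism (a deviation $|\beta-v(y)|\ge\delta$ forces $|\bar x(\beta)|\gtrsim t^2\delta$, whence finiteness of $\bar\mu$ pins $\beta$ near $0$ or $\bar\mu(\mathbb R)$ and contradicts the location of $y$) is exactly the contradiction argument behind Lemma~\ref{lem:important} and Lemma~\ref{lmm:importanddifference}; the paper organizes the uniformity through monotonicity of $\alpha\mapsto\alpha-\bar x(\alpha)$ and the two endpoint curves $\alpha_l(t),\alpha_r(t)$, while you run a uniform $\delta$-contradiction directly, but the underlying estimate is the same. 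Where you genuinely depart from the paper is in \eqref{eq:L2} and \eqref{eq:singular part}: the paper changes variables to $\alpha$, splits the $L^2$ error into the three regional integrals $I_1,I_2,I_3$ of \eqref{eq:I123}, and controls each via the Lagrangian identity $u_x^2(y(\alpha,t),t)\,y_\alpha(\alpha,t)=1-\bar x'(\alpha)$ on $B_t^L$ together with $\mathcal L(\mathbb R\setminus y(B_t^L,t))=0$, obtaining \eqref{eq:L2} first and then \eqref{eq:singular part} from energy conservation; you instead expand $\|u_x-\partial_x[\tfrac t2 v(\tfrac{4x}{t^2})]\|_{L^2}^2$ globally, use $\|u_x(\cdot,t)\|_{L^2}^2=\bar\mu(\mathbb R)-\mu_s(t)(\mathbb R)$ and the telescoping cross term $\tfrac2t\big[u(\tfrac{t^2}4\bar\mu(\mathbb R),t)-u(0,t)\big]\to\bar\mu(\mathbb R)$, and read off both conclusions at once from nonnegativity of the left-hand side (getting $\mu_s(t)(\mathbb R)\to0$ \emph{first}, then \eqref{eq:L2}). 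Your route is shorter and needs only energy conservation, condition (v) of Definition~\ref{def:weak}, absolute continuity of $u(\cdot,t)$ to justify the fundamental theorem of calculus in the cross term, and the two endpoint limits $v(0)=0$ and $v(\bar\mu(\mathbb R))=\bar\mu(\mathbb R)$; the paper's heavier route pays off later, since the regional estimates \eqref{eq:termI1}, \eqref{eq:termI3} and \eqref{eq: term I2} are reused verbatim to extract the quantitative rates of Theorem~\ref{thm:mainthm2}, which your global expansion does not localize.
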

It is worth noting that the $L^{\infty}(\mathbb{R})$ and $\dot{H}^1(\mathbb{R})$-norms of $\frac{t}{2} v\left(\frac{4x}{t^2}\right)$ are exactly $\frac{t}{2} \bar{\mu}(\mathbb{R})$ and $\sqrt{\bar{\mu}(\mathbb{R})}$, which are larger than the $o(t)$ and $o(1)$ terms respectively. Thus, Equations~\eqref{eq:Linfty} and \eqref{eq:L2} justify that the term $\frac{t}{2} v\left(\frac{4x}{t^2}\right)$ is the leading order term in the asymptotic expansions in $L^{\infty}(\mathbb{R})$  and $\dot{H}^1(\mathbb{R})$ respectively. 

On the other hand, for a weak/conservative solution $(u,\mu)$ to the generalized framework \eqref{eq:gHS1}-\eqref{eq:gHS3}, say in the sense of Definition~\ref{def:weak}, it is natural to ask whether the energy measure $\mu$ also has the corresponding asymptotic expansion. Indeed, the large time asymptotic behavior of $\mu$ is a direct consequence of that of $u_x$ and $\mu_s$. To illustrate the idea, let us derive the estimate in terms of the total variation as follows. Recall that the total variation distance between two Radon measures $\mu$ and $\nu$ can be defined as $\|\mu-\nu\|:= \sup_{A\in \mathcal{B}}|\mu(A) - \nu(A)|$, where $\mathcal{B}$ is the Borel $\sigma$-algebra of $\mathbb{R}$. Let $\di \nu(t) := \left( \partial_x\left[\frac{t}{2} v\left(\frac{4x}{t^2}\right)\right]\right)^2 \di x$, then it follows from \eqref{eq:L2} and \eqref{eq:singular part} that $\|\mu(t)-\nu(t)\|\le \left\|u_x(x,t)-\partial_x\left[\frac{t}{2} v\left(\frac{4x}{t^2}\right)\right]\right\|_{L^2}^2 +  {\mu}_{s}(t)(\mathbb{R})\to 0$, as $t \to \pm \infty$. To keep this work concise, we will not discuss the asymptotic expansions for $\mu$ separately in the rest of this paper, and leave this to interested readers.  
\begin{remark}[Kink-wave]\label{rmk:kinkwave}
It is worth noting that the special solution $\frac{t}{2} v\left(\frac{4x}{t^2}\right)$ is known as a kink-wave in the literature; see \cite{hunter1995nonlinear1} for instance. Hence,  the leading order terms of conservative solution $u$ in $L^{\infty}(\mathbb{R})$ and $\dot{H}^1(\mathbb{R})$ are given by the kink-wave determined by the total energy of the system. 
\end{remark}
\begin{remark}\label{rem:initial data for kinkwave}
{It follows from a direct checking that $(u,\mu):=\left(\frac{t}{2} v(\frac{4x}{t^2}), \left(\partial_x\left[\frac{t}{2} v\left(\frac{4x}{t^2}\right) \right] \right)^2\di x \right)$ is a conservative solution to the generalized framework \eqref{eq:gHS1}-\eqref{eq:gHS3} of the Hunter-Saxton equation subject to the special initial data $(0,\bar{\mu}(\mathbb{R}) \delta_0)$, in the sense of Definition~\ref{def:weak}. Hence, the kink-wave solution can be uniquely determined by its initial data in the generalized framework that we use in this work.
}
One important and interesting feature of this solution %starting from$(0,\bar{\mu}(\mathbb{R}) \delta)$ 
is that it would not blow up for all $t\neq 0$, i.e, the singular part of the energy measure is 0 for all $t\neq0$.
\end{remark}
\begin{remark}[Conjecture in \cite{hunter1991dynamics}]\label{rmk:conjecture}
In the original paper \cite{hunter1991dynamics}, Hunter and Saxton conjectured that {the large time behavior (as $t\to+\infty$) of all the strongly admissible solutions\footnote{In the sense of \cite[Definition 4.5]{hunter1991dynamics}.} can be described by
\[
U(x,t;\kappa):=
\left\{
\begin{aligned}
&-\kappa t,\quad -\infty<x\leq -\kappa t^2/2;\\
&2x/t,\quad -\kappa t^2/2<x<0;\\
&0,\quad 0\leq x<+\infty
\end{aligned}
\right.
\]
for some constant $\kappa\geq0$.} It is worth noting that $U(x,t;\kappa)$ is different from the kink-wave stated in Theorem~\ref{thm:mainthm1}, because the authors in \cite{hunter1991dynamics} actually studied a slightly different version of the Hunter-Saxton equation. More precisely, they defined the anti-derivative term on the right hand side of \eqref{eq:HS} as
\[
-\frac{1}{2}\int_x^{+\infty}u_x^2(y,t)\di y.
\]
For this definition, all the characteristics will  {propagate the energy} from right to left, and hence, the right part (for $x\ge0$) becomes zero asymptotically. However, for  \eqref{eq:HS}, all the characteristics will transfer the energy from left to right, and hence, the left part (for $x\le0$) becomes zero asymptotically. Moreover, there are some papers \cite{bressan2005global,Bressan2010,antonio2019lipschitz} using the following anti-derivative term (see \eqref{eq:HSanother} below):
\[
\frac{1}{4}\left(\int^x_{-\infty}u_x^2(y,t)\di y-\int_x^{+\infty}u_x^2(y,t)\di y\right).
\]
Then the asymptotic behavior of conservative solutions will also be different, especially the pointwise behavior; see Theorem \ref{mainthm:pointwiselimit2} for instance.
\end{remark}

To the best of authors' knowledge, the results below (i.e., Theorem~\ref{thm:mainthm2},  Theorem \ref{thm:mainthm3} and Theorem \ref{mainthm:pointwiselimit2}) have not been mentioned in the literature yet.
Equality~\eqref{eq:Linfty} tells us the error $\left\|u(\cdot,t) -\frac{t}{2}v\left(\frac{4x}{t^2}\right)\right\|_{L^\infty(\mathbb{R})}$ is $o(t)$, however,  it does not provide any precise size of it. While Equality \eqref{eq:L2} tells us the energy distribution $u_x^2(\cdot, t)$ will {asymptotically} behave like a piecewise constant function $\frac{2}{t}\chi_{\{0\leq x \leq \frac{t^2}{4} \bar{\mu}(\mathbb{R})\}}$ ($\chi$ is the characteristic function) as time goes by, but it also does not provide any information about the convergence rate. 
%\tcb{It turns out, as we founded, the convergence rate is closely connected to the behaviors  of the tails of the initial energy measure $\bar{\mu}$. }
	For example, if we have some further information on the decay rate of the tails of the initial energy measure $\bar{\mu}$, then we will be able to rigorously show some explicit growth rate of $\left\|u(\cdot,t)-\frac{t}{2} v\left(\frac{4x}{t^2}\right)\right\|_{L^\infty(\mathbb{R})}$ and decay rate of $\left\|u_x(\cdot,t)-\partial_x\left[\frac{t}{2} v\left(\frac{4x}{t^2}\right)\right]\right\|_{L^2(\mathbb{R})}$, {which enhance the estimates in \eqref{eq:Linfty} and \eqref{eq:L2}.}
More precisely, we have {the following optimal error estimates in $\dot{H}^1(\mathbb{R})$ and $L^\infty(\mathbb{R})$ respectively}:
\begin{theorem}[Error estimates in $\dot{H}^1(\mathbb{R})$ and $L^\infty(\mathbb{R})$]\label{thm:mainthm2}
Let $(u,\mu)$ be a conservative solution to the generalized framework \eqref{eq:gHS1}-\eqref{eq:gHS3} of the Hunter-Saxton equation subject to the initial data $(\bar{u},\bar{\mu})\in\mathcal{D}$, in the sense of Definition~\ref{def:weak}. Let the function $v$ be defined by \eqref{eq:v}.  
\begin{enumerate}[(i)]
	\item If $\bar{\mu}$ satisfies the following asymptotic behavior near $x=-\infty$:
	\begin{align}\label{eq:leftcondition1}
		\limsup_{x\to-\infty}|x|^{1-\theta_1}\bar{\mu}((-\infty,-|x|^{1+\theta_1})) \le  A_1,\tag{L1}
	\end{align} 
	for some constants $0\leq 
	\theta_1 < 1$ and $A_1>0$, then we have, as $t\to\pm\infty$,
	\begin{equation}\label{eq:Linfinitycontrolleft}
		\begin{aligned}
			&\left\|u(x,t)-\frac{t}{2} v\left(\frac{4x}{t^2}\right)\right\|_{L^\infty((-\infty, 0))}= O\left({|t|}^{\theta_1}\right),\\
			&\left\|u_x(x,t)-\partial_x\left[\frac{t}{2} v\left(\frac{4x}{t^2}\right)\right]\right\|_{L^2((-\infty, 0))} = O\left({|t|}^{\frac{\theta_1-1}{2}}\right).
		\end{aligned}
	\end{equation}
	\item If $\bar{\mu}$ satisfies the following asymptotic behavior near $x=+\infty$: 
	\begin{align}\label{eq:rightcondition1}
		\limsup_{x\to+\infty}x^{1-\theta_2}\bar{\mu}((x^{1+\theta_2},+\infty)) \le  A_2,\tag{R1}
	\end{align} 
	for some constants $0\leq 
	\theta_2<1$ and $A_2$, 
	then we have, as $t\to\pm\infty$,
	\begin{equation}\label{eq:Linfinitycontrolright}
		\begin{aligned}
			&\left\|u(x,t)-\frac{t}{2} v\left(\frac{4x}{t^2}\right)\right\|_{L^\infty(( \frac{t^2}{4}\bar{\mu}(\mathbb{R}), +\infty))}= O\left({|t|}^{\theta_2}\right),\\
			&\left\|u_x(x,t)-\partial_x\left[\frac{t}{2} v\left(\frac{4x}{t^2}\right)\right]\right\|_{L^2(( \frac{t^2}{4}\bar{\mu}(\mathbb{R}), +\infty))} = O\left({|t|}^{\frac{\theta_2-1}{2}}\right).
		\end{aligned}
	\end{equation}
	\item If $\bar{\mu}$ satisfies both \eqref{eq:leftcondition1} and \eqref{eq:rightcondition1}, then we have 
	\begin{equation}\label{eq:Linfinitycontrolmiddle}
		\begin{aligned}
			&\left\|u(x,t)-\frac{t}{2} v\left(\frac{4x}{t^2}\right)\right\|_{L^\infty(\mathbb{R})}= \left\|u(x,t)-\frac{t}{2} v\left(\frac{4x}{t^2}\right)\right\|_{L^\infty((0, \frac{t^2}{4}\bar{\mu}(\mathbb{R})))} = O\left({|t|}^{\theta}\right),\\
			&\left\|u_x(x,t)-\partial_x\left[\frac{t}{2} v\left(\frac{4x}{t^2}\right)\right]\right\|_{L^2(\mathbb{R})}=\left\|u_x(x,t)-\partial_x\left[\frac{t}{2} v\left(\frac{4x}{t^2}\right)\right]\right\|_{L^2((0, \frac{t^2}{4}\bar{\mu}(\mathbb{R})))} = O\left({|t|}^{\frac{\theta-1}{2}}\right),
		\end{aligned}
	\end{equation}
	and 
	\begin{align}\label{eq:singularpartdeacyrate2}
		{\mu}_{s}(t)(\mathbb{R})=O\left({|t|}^{\frac{\theta-1}{2}}\right)
	\end{align}
	as $t\to \pm\infty$, where $\theta:=\max\{\theta_1,\theta_2\}$. 
	\item In particular, for any compactly supported $\bar{\mu}$, the above estimates hold with $\theta_1=\theta_2=0$.
\end{enumerate} 
\end{theorem}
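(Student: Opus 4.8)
The approach is to push everything through the explicit representation of the conservative solution along the generalized characteristics from \cite{gao2021regularity}: if $\beta$ denotes the Lagrangian label, then $x=\bar x(\beta,t)=\bar x(\beta)+t\,\bar u(\beta)+\tfrac{t^{2}}{4}E(\beta)$ and $u(\bar x(\beta,t),t)=\bar u(\beta)+\tfrac{t}{2}E(\beta)$, where $E(\beta)$ is the $\bar\mu$-mass lying to the left of the $\beta$-th characteristic --- a nondecreasing function with range $[0,\bar\mu(\mathbb R)]$, constant along each characteristic (cf.\ \eqref{eq:xbarbeta}) --- and the energy measure $\mu(t)$ is the image of the initial energy under the map $\beta\mapsto\bar x(\beta,t)$. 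Fix $t$ large; the cases $t\to\pm\infty$ are symmetric once $t$ is replaced by $|t|$ in every rate. Split $\mathbb R$ into $I_{-}:=(-\infty,0)$, $I_{0}:=(0,\tfrac{t^{2}}{4}\bar\mu(\mathbb R))$ and $I_{+}:=(\tfrac{t^{2}}{4}\bar\mu(\mathbb R),+\infty)$, on which $v(4x/t^{2})$ equals $0$, $4x/t^{2}$ and $\bar\mu(\mathbb R)$ respectively, and estimate the error on each piece.

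On $I_{0}$ a direct substitution of the two formulae yields the algebraic identity $u(x,t)-\tfrac{t}{2}v(4x/t^{2})=u(x,t)-\tfrac{2x}{t}=-\bar u(\beta)-\tfrac{2}{t}\bar x(\beta)$ for any label $\beta$ whose characteristic passes through $(x,t)$, so it suffices to bound $|\bar x(\beta)|/|t|$ for such $\beta$. This is where the tail hypotheses enter, through a self-consistent (``bootstrap'') estimate: if $\bar x(\beta)$ is large and positive, then $\bar x(\beta,t)\in I_{0}$ forces $\tfrac{t^{2}}{4}(\bar\mu(\mathbb R)-E(\beta))\gtrsim\bar x(\beta)$, while hypothesis~\eqref{eq:rightcondition1} bounds the right-tail mass $\bar\mu(\mathbb R)-E(\beta)=\bar\mu((\bar x(\beta),+\infty))\lesssim\bar x(\beta)^{-(1-\theta_{2})/(1+\theta_{2})}$; eliminating $\bar x(\beta)$ gives $\bar x(\beta)\lesssim|t|^{1+\theta_{2}}$, hence $|\bar x(\beta)|/|t|\lesssim|t|^{\theta_{2}}$, and symmetrically \eqref{eq:leftcondition1} controls the labels with $\bar x(\beta)$ large and negative, yielding $|\bar x(\beta)|/|t|\lesssim|t|^{\theta_{1}}$; together with $\|\bar u\|_{L^{\infty}}<\infty$ this is the $L^{\infty}$ bound on $I_{0}$. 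On $I_{-}$ one has $v=0$, and a characteristic with $\bar x(\beta,t)<0$ must carry very little left-energy; the same bootstrap with \eqref{eq:leftcondition1} gives $E(\beta)\lesssim|t|^{\theta_{1}-1}$, whence $|u(\bar x(\beta,t),t)|\le\|\bar u\|_{L^{\infty}}+\tfrac{|t|}{2}E(\beta)=O(|t|^{\theta_{1}})$; the interval $I_{+}$ is treated identically using \eqref{eq:rightcondition1} on the right-energy. Since $\mu(t)$ is transported along the flow, the same estimates on $E$ at the labels $\beta_{0},\beta_{1}$ determined by $\bar x(\beta_{0},t)=0$ and $\bar x(\beta_{1},t)=\tfrac{t^{2}}{4}\bar\mu(\mathbb R)$ give $\mu(t)(I_{-})=O(|t|^{\theta_{1}-1})$ and $\mu(t)(I_{+})=O(|t|^{\theta_{2}-1})$; because $\partial_{x}[\tfrac{t}{2}v(4x/t^{2})]\equiv0$ on $I_{\pm}$, this is precisely the $L^{2}$ error there, as $\|u_{x}\|_{L^{2}(I_{\pm})}^{2}=\mu_{ac}(t)(I_{\pm})\le\mu(t)(I_{\pm})$.

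For the $\dot H^{1}$ estimate on $I_{0}$ and on $\mathbb R$, I would expand the square using $\partial_{x}[\tfrac{t}{2}v(4x/t^{2})]=\tfrac{2}{t}\chi_{I_{0}}$: this gives $\|u_{x}-\partial_{x}[\tfrac{t}{2}v(4x/t^{2})]\|_{L^{2}(\mathbb R)}^{2}=\int_{\mathbb R}u_{x}^{2}\,\di x-\tfrac{4}{t}(u(\tfrac{t^{2}}{4}\bar\mu(\mathbb R),t)-u(0,t))+\bar\mu(\mathbb R)$. Conservation of total mass (Theorem~\ref{thm:main}) gives $\int_{\mathbb R}u_{x}^{2}\,\di x=\bar\mu(\mathbb R)-\mu_{s}(t)(\mathbb R)$, and inserting the two boundary values already obtained, $u(0,t)=O(|t|^{\theta_{1}})$ and $u(\tfrac{t^{2}}{4}\bar\mu(\mathbb R),t)=\tfrac{t}{2}\bar\mu(\mathbb R)+O(|t|^{\theta_{2}})$, collapses the right-hand side to $-\mu_{s}(t)(\mathbb R)+O(|t|^{\theta-1})$, where $\theta=\max\{\theta_{1},\theta_{2}\}$. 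As the left-hand side is nonnegative and $\mu_{s}(t)(\mathbb R)\ge0$, this simultaneously forces $\mu_{s}(t)(\mathbb R)=O(|t|^{\theta-1})$, which is \eqref{eq:singularpartdeacyrate2}, and $\|u_{x}-\partial_{x}[\tfrac{t}{2}v(4x/t^{2})]\|_{L^{2}(\mathbb R)}=O(|t|^{(\theta-1)/2})$; restricting the integral to $I_{0}$, and recalling that the $I_{\pm}$ contributions are of order at most $|t|^{\theta-1}$, yields the localized statements, while part~(iv) is immediate because a compactly supported $\bar\mu$ satisfies \eqref{eq:leftcondition1} and \eqref{eq:rightcondition1} with $\theta_{1}=\theta_{2}=0$. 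I expect the main obstacle to lie in the second paragraph: the generalized characteristics need not be injective (plateaus arising from atoms of $\bar\mu$ and from focusing in energy-free regions), so ``the characteristic through $(x,t)$'' must be handled with care, and the transport description of $\mu(t)$ together with the monotonicity of $E$ must be invoked to justify both the algebraic identity on $I_{0}$ and the self-consistent bounds on $|\bar x(\beta)|$ and on the tail energies; once that bookkeeping is in place, everything else is an elementary consequence of \eqref{eq:leftcondition1}--\eqref{eq:rightcondition1}, and the optimality of the exponents follows by testing against initial data whose tails realize these hypotheses with equality.
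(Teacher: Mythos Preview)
Your approach is essentially the paper's: both split $\mathbb{R}$ into $I_-$, $I_0$, $I_+$, use the explicit formula $y(\alpha,t)=\bar x(\alpha)+t\,\bar u(\bar x(\alpha))+\tfrac{t^{2}}{4}(\alpha-\bar x(\alpha))$ and the monotonicity of $E(\alpha)=\alpha-\bar x(\alpha)$ to reduce everything to controlling $\bar x(\alpha_l(t))$ and $\bar x(\alpha_r(t))$, and your ``bootstrap'' is exactly the content of Lemma~\ref{lemma:properties}(i) and Lemma~\ref{lemma:propertiesright}(i), yielding $|\bar x(\alpha_l(t))|\lesssim|t|^{1+\theta_1}$, $|\bar x(\alpha_r(t))|\lesssim|t|^{1+\theta_2}$ and hence \eqref{eq:estimatesforleftdifference}--\eqref{eq:estimates for rightdifference}. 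The $L^\infty$ and $L^2$ estimates on $I_\pm$ then follow from \eqref{eq:estiatleft}--\eqref{eq:estiatright} and \eqref{eq:termI1}--\eqref{eq:termI3} exactly as you describe.

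The one genuine difference is your treatment of the global $\dot H^1$ error and of $\mu_s$. The paper handles $I_2$ by the change of variables in \eqref{eq: term I2} and then deduces \eqref{eq:singularpartdeacyrate2} from the reverse triangle inequality $\big|\,\|u_x\|_{L^2}-\|\partial_x[\tfrac t2 v]\|_{L^2}\big|\le\|u_x-\partial_x[\tfrac t2 v]\|_{L^2}$, which only gives $\mu_s(t)(\mathbb R)=O(|t|^{(\theta-1)/2})$. Your direct expansion of the square on all of $\mathbb{R}$ produces the cleaner identity
\[
\|u_x-\partial_x[\tfrac t2 v(\tfrac{4x}{t^2})]\|_{L^2(\mathbb R)}^2+\mu_s(t)(\mathbb R)=2\bar\mu(\mathbb R)-\tfrac{4}{t}\big(u(\tfrac{t^2}{4}\bar\mu(\mathbb R),t)-u(0,t)\big)=O(|t|^{\theta-1}),
\]
which, since both left-hand terms are nonnegative, simultaneously gives the $\dot H^1$ bound and the \emph{stronger} estimate $\mu_s(t)(\mathbb R)=O(|t|^{\theta-1})$. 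This is a real improvement over \eqref{eq:singularpartdeacyrate2}; the paper itself remarks (Remark~\ref{rem:singularpart}) that \eqref{eq:singularpartdeacyrate2} is not expected to be sharp.
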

The estimates \eqref{eq:Linfinitycontrolleft}-\eqref{eq:Linfinitycontrolmiddle} in Theorem \ref{thm:mainthm2} are indeed optimal. For example, the estimates \eqref{eq:Linfinitycontrolleft} and \eqref{eq:Linfinitycontrolright} will be attainable, if the conditions \eqref{eq:leftcondition1} and \eqref{eq:rightcondition1} are changed to \eqref{eq:leftcondition2} and \eqref{eq:rightcondition2} below; see Remark \ref{rmk:L2R2} below for further details. The estimate \eqref{eq:Linfinitycontrolmiddle} is also optimal, since it basically relies on \eqref{eq:Linfinitycontrolleft}-\eqref{eq:Linfinitycontrolright}. However, \eqref{eq:singularpartdeacyrate2} can be further improved; see Remark \ref{rem:singularpart} below for further discussions.

At last, we have the following pointwise convergence results for $u$ if we  know the  exact decay rate of the left tail of the initial energy measure $\bar{\mu}$ and the value of the limit $\bar{u}(-\infty):=\lim_{x\to-\infty}\bar{u}(x)$; here, the  {key observation} is that  we only need the  information of $\bar{u}$ and $\bar{\mu}$ in a neighbourhood of $-\infty$.
%{\color{red} (To Hao and Yu: I want to replace the green part below by the blue part above.)}
%{\color{green} The estimates \eqref{eq:Linfinitycontrolleft}-\eqref{eq:Linfinitycontrolright} stated in Theorem \ref{thm:mainthm2} are optimal  in the sense that the estimates \eqref{eq:Linfinitycontrolleft} and \eqref{eq:Linfinitycontrolright} are  attainable when we change ``$\leq$'' in \eqref{eq:leftcondition1} and \eqref{eq:rightcondition1} to the equal sign $``="$, i.e. \eqref{eq:leftcondition2} and \eqref{eq:rightcondition2} below; see Remark \ref{rmk:L2R2} below for instance. The estimates \eqref{eq:Linfinitycontrolmiddle} is also optimal as \eqref{eq:Linfinitycontrolleft}-\eqref{eq:Linfinitycontrolright}  are optimal.
%	We believe that \eqref{eq:singularpartdeacyrate2} can be improved; see Remark \ref{rem:singularpart} below for discussions.
%At last, we  have the following pointwise convergence results for $u$ if we  know the  exact decay rate of the left tail of the initial energy measure $\bar{\mu}$ and the value of the limit $\bar{u}(-\infty):=\lim_{x\to-\infty}\bar{u}(x)$; here, the  {key observation} is that  we only need the  information of $\bar{u}$ and $\bar{\mu}$ in a neighbourhood of $-\infty$.}

\begin{theorem}[Large time Pointwise behavior of $u$]\label{thm:mainthm3}	
Let $(u,\mu)$ be a conservative solution to the generalized framework \eqref{eq:gHS1}-\eqref{eq:gHS3} of the Hunter-Saxton equation  subject to the initial data $(\bar{u},\bar{\mu})\in\mathcal{D}$, in the sense of Definition~\ref{def:weak}.
Assume that $\bar{\mu}$ satisfies  the following asymptotic behavior near the left tail of the initial energy measure $\bar{\mu}$:
\begin{align}\label{eq:leftcondition2}
\lim_{x\to -\infty}|x|^{1-\theta_1}{\bar{\mu}((-\infty, -|x|^{1+\theta_1}))} = A_1 \tag{L2}
\end{align} 
for some constants $0\leq \theta_1<1$ and $A_1\geq0$. Then for any $x\in\mathbb{R}$,
\begin{enumerate}
\item[(i)] if $A_1>0$ and $0<\theta_1<1$, we have
\begin{equation}\label{eq:uxtlimit1}
\lim_{t\to+\infty}\frac{u(x,t)}{t^{\theta_1}}=2\left(\frac{A_1}{4}\right)^{\frac{1+\theta_1}{2}};
\end{equation}
\item[(ii)] if $A_1>0$ and $\theta_1=0$ {and $\bar{u}(-\infty)$ exists}, we have
\begin{equation}\label{eq:uxtlimit2}
\lim_{t\to+\infty}u(x,t)=\sqrt{\bar{u}^2(-\infty)+A_1};
\end{equation}
\item[(iii)] if $A_1=0$, $0<\theta_1<1$, and $\bar{\mu}((-\infty,z))>0$ for any $z\in\mathbb{R}$, we have
\begin{equation}\label{eq:uxtlimit3}
\lim_{t\to+\infty}\frac{u(x,t)}{t^{\theta_1}}=0;
\end{equation}
\item[(iv)] if $A_1=\theta_1=0$, and $\bar{\mu}((-\infty,z))>0$ for any $z\in\mathbb{R}$,  and $\bar{u}(-\infty)$ exists, then we have
\begin{equation}\label{eq:limtingforfixedx}
\lim_{t\to+\infty}u(x,t)=|\bar{u}(-\infty)|.
\end{equation}
\end{enumerate}
If $\bar{\mu}$ satisfies
\begin{align}\label{eq:leftcondition3}
\ell:=\inf\mathrm{supp}\{\bar{\mu}\}>-\infty,\tag{L3}
\end{align}
then we also have \eqref{eq:limtingforfixedx} provided that   $\bar{u}(-\infty)$ exists.
\end{theorem}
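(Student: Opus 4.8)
The plan is to reduce the entire statement to the explicit characteristic formula for conservative solutions established in \cite{gao2021regularity} (see \eqref{eq:xbarbeta} and Theorem~\ref{thm:main}), and then to carry out a careful asymptotic analysis of the single characteristic passing through a fixed point. Parametrize the characteristics by the energy label $\beta\in[0,\bar{\mu}(\mathbb{R})]$, and let $\bar{x}(\beta)$ be the initial position of the characteristic carrying leftward energy $\beta$, i.e. a generalized inverse of the distribution function $\bar{F}(x):=\bar{\mu}((-\infty,x))$. Since the leftward energy is transported and preserved by \eqref{eq:gHS2}, integrating \eqref{eq:gHS1} along characteristics yields
\[
X_\beta(t)=\bar{x}(\beta)+\bar{u}(\bar{x}(\beta))\,t+\frac{\beta t^2}{4},\qquad u(X_\beta(t),t)=\bar{u}(\bar{x}(\beta))+\frac{\beta t}{2}.
\]
Fixing $x\in\mathbb{R}$, the non-crossing property makes $\beta\mapsto X_\beta(t)$ nondecreasing, so for each large $t$ there is an essentially unique label $\beta(t)$ with $X_{\beta(t)}(t)=x$, and the (well-defined) value is $u(x,t)=\bar{u}(\bar{x}(\beta(t)))+\tfrac12\beta(t)t$. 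The theorem thus becomes the asymptotic study of the two scalars $\beta(t)t$ (producing the leading term) and $\bar{u}(\bar{x}(\beta(t)))$ (the boundary value dragged from $-\infty$) as $t\to+\infty$.

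Writing $\gamma:=\frac{1-\theta_1}{1+\theta_1}$, the substitution $s=-|x|^{1+\theta_1}$ shows that \eqref{eq:leftcondition2} is equivalent to $\bar{\mu}((-\infty,s))\sim A_1|s|^{-\gamma}$ as $s\to-\infty$, which inverts (for $A_1>0$) to $|\bar{x}(\beta)|\sim (A_1/\beta)^{1/\gamma}$ as $\beta\to0^+$. Inserting this into the defining relation, written as $\frac{\beta t^2}{4}=x-\bar{x}(\beta)-\bar{u}(\bar{x}(\beta))\,t$, I would use the scaling ansatz $\beta(t)\asymp t^{-(1-\theta_1)}$, under which the two principal terms $\frac{\beta t^2}{4}$ and $-\bar{x}(\beta)=|\bar{x}(\beta)|$ are both of order $t^{1+\theta_1}$. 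Matching their coefficients determines $c:=\lim_{t\to+\infty}\beta(t)t^{1-\theta_1}$ as the positive root of $\frac{c}{4}=(A_1/c)^{1/\gamma}$, namely $c=4^{(1-\theta_1)/2}A_1^{(1+\theta_1)/2}$. Since $u$ is bounded, the term $\bar{u}(\bar{x}(\beta))\,t=O(t)$ has the strictly smaller order $t<t^{1+\theta_1}$ whenever $\theta_1>0$ and hence does not affect $c$; feeding this into $u(x,t)=\bar{u}+\frac{\beta t}{2}=\frac{c}{2}t^{\theta_1}(1+o(1))$ gives case~(i) after the elementary check $\frac{c}{2}=2(A_1/4)^{(1+\theta_1)/2}$. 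For case~(ii) ($\theta_1=0$, so $\gamma=1$ and $\beta(t)\asymp t^{-1}$) all three terms $\frac{\beta t^2}{4}$, $|\bar{x}(\beta)|\sim A_1/\beta$ and $\bar{u}(\bar{x}(\beta))t$ share the common order $t$, which is exactly why the existence of $\bar{u}(-\infty)$ is required to pass to the limit in the last term; setting $c:=\lim\beta(t)t$ turns the balance into the quadratic $\frac{c}{4}=\frac{A_1}{c}-\bar{u}(-\infty)$ whose positive root $c=-2\bar{u}(-\infty)+2\sqrt{\bar{u}^2(-\infty)+A_1}$ yields $u(x,t)\to\bar{u}(-\infty)+\frac{c}{2}=\sqrt{\bar{u}^2(-\infty)+A_1}$.

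For the degenerate cases $A_1=0$, the hypothesis $\bar{\mu}((-\infty,z))>0$ for all $z$ still forces $\bar{x}(\beta)\to-\infty$, so the small-$\beta$ regime remains relevant, but $A_1=0$ only provides $\bar{\mu}((-\infty,s))=o(|s|^{-\gamma})$ rather than a matched power. I would handle these by a squeeze: for every $\varepsilon>0$ the tail is eventually dominated by the $A_1=\varepsilon$ profile, and the monotonicity of $\beta\mapsto X_\beta(t)$ then forces $\beta(t)t^{1-\theta_1}\le c(\varepsilon)+o(1)$ with $c(\varepsilon)\to0$; together with $\beta(t)\ge0$ and the boundedness of $\bar{u}$ this gives $u(x,t)/t^{\theta_1}\to0$ in (iii), and in (iv) the same squeeze sends $\beta(t)t$ to the $A_1=0$ root, giving $u(x,t)\to|\bar{u}(-\infty)|$ (consistently with setting $A_1=0$ in case~(ii)). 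Under \eqref{eq:leftcondition3} the left edge $\ell:=\inf\mathrm{supp}\,\bar{\mu}$ is finite, so $\bar{x}(\beta)\to\ell$ and $\bar{u}(\bar{x}(\beta))\to\bar{u}(\ell)=\bar{u}(-\infty)$ (as $\bar{u}$ is constant on the energy-free region $(-\infty,\ell)$). Here the analysis splits by the sign of $\bar{u}(-\infty)$: if $\bar{u}(-\infty)\ge0$ the leftmost characteristic $X_0(t)=\ell+\bar{u}(\ell)t$ eventually lies to the right of $x$, so $x$ sits in the energy-free region and $u(x,t)=\bar{u}(-\infty)$; if $\bar{u}(-\infty)<0$ the point $x$ is overtaken and the balance $\frac{\beta t^2}{4}\approx-\bar{u}(\ell)t$ gives $\beta(t)t\to-4\bar{u}(\ell)$, whence $u(x,t)\to\bar{u}(\ell)-2\bar{u}(\ell)=-\bar{u}(-\infty)$. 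In both cases $u(x,t)\to|\bar{u}(-\infty)|$, which is \eqref{eq:limtingforfixedx}.

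I expect the main obstacle to be the rigorous control of the implicit function $\beta(t)$. Condition \eqref{eq:leftcondition2} only prescribes $\bar{\mu}$ along the sampled points $s=-|x|^{1+\theta_1}$, so it must first be upgraded into genuine two-sided asymptotics of $\bar{x}(\beta)$ valid across the full range of small $\beta$; and one must show that the candidate small-$\beta$ solution of $X_\beta(t)=x$ is the correct, essentially unique branch despite the possible non-smoothness of $\bar{x}(\cdot)$ caused by atoms and plateaus of $\bar{\mu}$. Tracking the negligibility of the $x$- and $\bar{u}\,t$-corrections in case~(i), together with their exact limiting contributions in cases~(ii) and (iv), uniformly in $t$, is precisely where the boundedness of $u$ and the existence of $\bar{u}(-\infty)$ are indispensable.
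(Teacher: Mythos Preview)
Your approach is essentially the paper's: both reduce to the characteristic formula \eqref{eq:measuresolutionu} at the fixed point $x$ and analyze the implicit relation \eqref{eq:pro11} (your identity $\tfrac{\beta t^2}{4}=x-\bar{x}(\beta)-\bar{u}(\bar{x}(\beta))\,t$ is exactly this, with $\beta=\alpha_x(t)-\bar{x}(\alpha_x(t))$). The paper parametrizes by $\alpha$ and packages the asymptotics of $\bar{x}(\alpha_x(t))/t^{1+\theta_1}$ into a standalone Lemma~\ref{lemma:properties}, proved by liminf/limsup matching; the algebraic equations it arrives at (e.g.~\eqref{eq:equationfora} and the quadratic $a^2-4\bar{u}(-\infty)a-4A_1=0$) are precisely your coefficient-matching and quadratic balance. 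Your case split for \eqref{eq:leftcondition3} by the sign of $\bar{u}(-\infty)$ also mirrors the paper's Appendix~\ref{app:compact}.

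One small correction: your worry that \eqref{eq:leftcondition2} only prescribes $\bar{\mu}$ along the sampled points $s=-|x|^{1+\theta_1}$ is unfounded, since that substitution is a bijection of $(-\infty,0)$ onto itself; so \eqref{eq:leftcondition2} already \emph{is} the full tail statement $|s|^{\gamma}\bar{\mu}((-\infty,s))\to A_1$, and no upgrading is needed.
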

\begin{remark}
	It is worth noting that the conditions \eqref{eq:leftcondition2} above and \eqref{eq:rightcondition2} below are attainable for any $0\leq \theta_1,\theta_2<1$. For a precise construction of such initial data, see Example \ref{ex:nosingular} in Appendix \ref{app} for instance.
\end{remark}
% 	Next, we consider an example that can generate singular measure at countably many times.
% 	\begin{example}
% 	Consider $\bar{u}(x)= \int_{0}^{x} f(y)\di y$, where $f(y)$ is an odd function and for $y> 0$, we have $f(y)=\sum_{n=0}^{+\infty}\frac{(-1)^n\chi_{n}(y)}{(n+1)^a}$, where $a\in (\frac{1}{2},1]$, $\chi_{n}$ is the characteristic function of $(n,n+1)$, 
% 	We know that $\bar{u}$ is bounded, Lipschitz continuous  and even, moreover, $\bar{u}(-\infty)$ = $\bar{u}(+\infty)$ exist as the integral $\int_{0}^{+\infty}f(y) \di y$=$ \sum_{n=0}^{+\infty}\frac{(-1)^n}{(n+1)^a}$ is conditionally convergent.
% 	Now for $n\geq 0$,
% 	\begin{equation*}
% 		\bar{u}_x(x)=f(x)=\left\{
% 		\begin{aligned}
% 			\frac{1}{(n+1)^a},~x\in[n,n+1),~\textrm{$n$ even},\\
% 				-\frac{1}{(n+1)^a},~x\in[n,n+1),~\textrm{$n$ odd}.\\
% 		\end{aligned}
% 	\right.
% 	\end{equation*}
% Due to $a>\frac{1}{2}$, we see that $\bar{u}_x(x)\in L^2(\mathbb{R})$, and for $\theta_2=\frac{1}{a}-1$, we have
% 	\begin{equation*}
% 	\lim_{x\to +\infty}x^{1-\theta_2}{\bar{\mu}((x^{1+\theta_2},+\infty))}
% 	=\lim_{x\to +\infty}x^{1-\theta_2} \int_{x^{1+\theta_2}}^{+\infty}f^2(y) \di y = C(a)>0.
% \end{equation*}
%\end{example}

% We have constructed an example so that \eqref{eq:rightcondition2} holds with $\theta_1=\theta_2$.
We can see from the above theorem that for $\theta_1>0$, the effect of   the tail of the initial energy measure is dominant and it accounts for the pointwise sub-linear growth $t^{\theta_1}$. The more interesting case is $\theta_1=0$, the effects of the tails of the initial energy measure $\bar{\mu}$ and $\bar{u}(-\infty)$ are at the same level, and hence, they affect each other in a non-trivial way; see \eqref{eq:uxtlimit2} for instance. Moreover, \eqref{eq:limtingforfixedx} can also be seen as the limiting case of \eqref{eq:uxtlimit2} as $A_1\to 0^+$. We refer readers to the beginning of Section~\ref{sec:decayrate} for further discussion, as well as Remark~\ref{rem:Discussion_of_Figure1} and Figure~\ref{fig:chara} for some more explanations of \eqref{eq:limtingforfixedx}.
It is worth noting that if $\left(u(\cdot,t), \mu(t)\right)$ is a conservative solution to the Hunter-Saxton equation, then so is $\left(\tilde{u}(\cdot,t), \tilde{\mu}(t) \right)$ $=\left(-u(\cdot,-t), \mu(-t)\right)$.
Using this fact, one can immediately obtain similar results to Theorem \ref{thm:mainthm3} and Theorem \ref{mainthm:pointwiselimit2} for the case  $t\to-\infty$. 

Finally, we will also study the asymptotic behavior of conservative solutions to the following form of Hunter-Saxton equation, which also appears in the literature (see \cite{bressan2005global,Bressan2010,antonio2019lipschitz} for instance):
\begin{align}\label{eq:HSanother} 
u_t+uu_x=\frac{1}{4}\left(\int_{-\infty}^x u_y^2 (y, t)\di y- \int_x^{+\infty} u_y^2 (y, t)\di y\right).
\end{align}
The generalized framework of this form can also be considered in the same manner as that for \eqref{eq:HS}; for instance, see \eqref{eq:gHS21}-\eqref{eq:gHS23} below. In Section~\ref{sec: another form} we will only state (without proof) the global characteristics and the rescaled limit $v_1(x):=\lim_{t \to \pm\infty}\frac{2}{t}{u\left(\frac{t^2}{4}x,t\right)}$ for conservative solutions to the generalized framework \eqref{eq:gHS21}-\eqref{eq:gHS23} corresponding to \eqref{eq:HSanother}; see Theorem \ref{thm:limitfunction2} for the precise form of $v_1(x)$. Similar asymptotic expansions and error estimates stated in Theorem \ref{thm:mainthm1} and Theorem \ref{thm:mainthm2} respectively for the generalized framework \eqref{eq:gHS21}-\eqref{eq:gHS23} to the Hunter-Saxton Equation \eqref{eq:HSanother} can be also obtained exactly in the same way based on this new limiting function $v_1(x)$, and we leave them to interested readers.  
However, we will provide the pointwise limit $\lim_{t\to\pm\infty} u(x,t)$ for conservative solutions to the generalized framework \eqref{eq:gHS21}-\eqref{eq:gHS23} of \eqref{eq:HSanother} below, 
%and it shows that although the asymptotic expansions in $L^{\infty}(\mathbb{R})$ and $\dot{H}^1(\mathbb{R})$ are similar to that of the generalized framework \eqref{eq:gHS1}-\eqref{eq:gHS3}, 
as it is shown that the pointwise limit can be very different. Compared with Theorem \ref{thm:mainthm3}, the situation now is much  simpler; more precisely, the limit $\lim_{t\to+\infty} u(x,t)$ not only always exists for all $x$, but is just a constant independent of $x$.
\begin{theorem}[Pointwise convergence of $u$ to the Hunter-Saxton equation \eqref{eq:HSanother}]\label{mainthm:pointwiselimit2}
Let $(u,\mu)$ be the conservative solution to the generalized framework \eqref{eq:gHS21}-\eqref{eq:gHS23} of the Hunter-Saxton equation \eqref{eq:HSanother} subject to the initial data $(\bar{u},\bar{\mu})\in\mathcal{D}$.
Then for any fixed $x\in\mathbb{R}$,
\begin{equation}\label{eq:pointwizsev2}
\lim_{t\to +\infty}u(x,t) = -\bar{u}(\bar{x}(\alpha^*)),
\end{equation}
where $\alpha^*$ is any real number such that $\alpha^*-\bar{x}(\alpha^*)= \frac{1}{2}\bar{\mu}(\mathbb{R})$, and the function $\bar{x}(\alpha)$ will be defined by \eqref{eq:barx1}.
\end{theorem}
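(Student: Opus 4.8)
The plan is to work entirely with the explicit generalized characteristics of the conservative solution to \eqref{eq:HSanother} --- in particular the map $\bar x(\alpha)$ of \eqref{eq:barx1} and the companion formulas recorded in Section~\ref{sec: another form} --- and to follow the single characteristic passing through a fixed point $(x,t)$ as $t\to+\infty$. Write $M:=\bar\mu(\mathbb{R})$ and let $G(\alpha):=\alpha-\bar x(\alpha)$ be the cumulative energy carried to the left of the characteristic labelled $\alpha$; then $G$ is continuous and non-decreasing, runs from $0$ to $M$, and $\bar x(\alpha)=\alpha-G(\alpha)$ is non-decreasing, Lipschitz and onto $\mathbb{R}$. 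The starting observation is that, since the energy measure is transported along the flow, the source term on the right of \eqref{eq:HSanother} is \emph{constant in $t$} along each characteristic and equals $s(\alpha):=\tfrac{1}{2} G(\alpha)-\tfrac{1}{4} M$; hence the characteristic and the solution along it are the explicit quadratics $y(\alpha,t)=\bar x(\alpha)+t\,\bar u(\bar x(\alpha))+\tfrac{t^2}{2}s(\alpha)$ and $u(y(\alpha,t),t)=\bar u(\bar x(\alpha))+t\,s(\alpha)$. (If $M=0$ the equation degenerates to $u_t+uu_x=0$ with $\bar u$ constant and the claim is immediate, so I assume $M>0$ from now on.)

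First I would fix $x$; since $y(\cdot,t)$ is continuous with $y(\alpha,t)\to\pm\infty$ as $\alpha\to\pm\infty$ (here $M>0$ enters, through $s(\alpha)\to\pm M/4$), some $\alpha_t$ satisfies $y(\alpha_t,t)=x$. Solving the relation $y(\alpha_t,t)=x$ for $s(\alpha_t)$ and substituting into the formula for $u$ gives the clean identity $u(x,t)=\tfrac{2}{t}\bigl(x-\bar x(\alpha_t)\bigr)-\bar u\bigl(\bar x(\alpha_t)\bigr)$ (valid for any admissible choice of $\alpha_t$), so the entire theorem reduces to controlling the scalar $\bar x(\alpha_t)$ as $t\to+\infty$.

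The heart of the argument is to show that $\bar x(\alpha_t)$ stays bounded and converges to $\bar x(\alpha^*)$. Boundedness is soft: if $\bar x(\alpha_{t_k})\to+\infty$ along some $t_k\to\infty$ then $\alpha_{t_k}\to+\infty$ (as $\bar x$ is non-decreasing and onto), so $s(\alpha_{t_k})\to M/4>0$, and since $\|\bar u\|_{L^\infty}<\infty$ the relation $y(\alpha_{t_k},t_k)=x$ forces a term of order $t_k^2$ to balance a bounded quantity, which is impossible; the case $\bar x(\alpha_{t_k})\to-\infty$ is symmetric. Boundedness of $\bar x(\alpha_t)$ together with the identity then makes $t\,s(\alpha_t)$ bounded, so $s(\alpha_t)\to0$, i.e. $G(\alpha_t)\to M/2$. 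Along any subsequence with $\bar x(\alpha_t)\to\xi$ we get $\alpha_t=\bar x(\alpha_t)+G(\alpha_t)\to\xi+M/2=:\alpha_\infty$ with $G(\alpha_\infty)=M/2$ by continuity of $G$; thus $\alpha_\infty$ is an admissible $\alpha^*$ and $\xi=\bar x(\alpha_\infty)$. To conclude, I would check that $\bar u(\bar x(\alpha^*))$ does not depend on which admissible $\alpha^*$ is chosen: the set $\{G=M/2\}$ is a compact interval on which $\bar x$ is a translation, so its $\bar x$-image is a compact interval across which $\bar\mu$ carries no absolutely continuous part; by the compatibility relation $\di\bar\mu_{ac}=\bar u_x^2\,\di x$ this forces $\bar u_x\equiv0$, hence $\bar u$ constant, there. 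Therefore $\bar u(\bar x(\alpha_t))\to\bar u(\bar x(\alpha^*))$ along the subsequence with a subsequence-independent limit, hence along the whole family, and combining with the identity yields $u(x,t)\to-\bar u(\bar x(\alpha^*))$.

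The main obstacle, and the step requiring the most care, is this last one: establishing that the label $\alpha_t$ of the characteristic through $(x,t)$ is attracted, as $t\to+\infty$, to the ``energy-median'' characteristic $\alpha^*$ fixed by $G(\alpha^*)=\tfrac{1}{2}\bar\mu(\mathbb{R})$ (equivalently $\alpha^*-\bar x(\alpha^*)=\tfrac{1}{2}\bar\mu(\mathbb{R})$), together with the unambiguity of the right-hand side of \eqref{eq:pointwizsev2}; the latter genuinely uses the compatibility condition $\di\bar\mu_{ac}=\bar u_x^2\,\di x$, without which $-\bar u(\bar x(\alpha^*))$ would not be well defined when $\bar\mu$ has a gap at the energy-median. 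Some additional bookkeeping is needed at the at-most-countably-many times where $\mu(t)$ carries atoms (so that $y(\cdot,t)$ fails to be injective at $x$), where the quadratic solution formula must be invoked in the precise form guaranteed by the characteristic construction of \cite{gao2021regularity} adapted to \eqref{eq:HSanother}.
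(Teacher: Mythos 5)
Your proposal is correct and follows essentially the same route as the paper's proof: the same explicit quadratic characteristics, the same boundedness-by-contradiction argument for the label $\alpha_t$ (forcing the $t^2$-coefficient to vanish in the limit, i.e. $\alpha_t-\bar x(\alpha_t)\to\tfrac12\bar\mu(\mathbb{R})$), the same reduction via the identity $u(x,t)=\tfrac{2}{t}(x-\bar x(\alpha_t))-\bar u(\bar x(\alpha_t))$, and the same resolution of the non-uniqueness of $\alpha^*$ by showing $\bar\mu$ vanishes on the gap so that $\bar u_x\equiv0$ there, followed by a subsequence argument.
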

It is worth noting that the definition of $\alpha^*$ above is not unique in general, however, one can easily verify that this will not affect the value of $\bar{u}(\bar{x}(\alpha^*))$, namely the numerical value of $\bar{u}(\bar{x}(\alpha^*))$ in \eqref{eq:pointwizsev2} is uniquely determined; see the proof of Theorem \ref{mainthm:pointwiselimit2} in Section \ref{sec: another form}. 

The rest of this paper is organized as follows. Section \ref{sec:characteristicsmain1} and Section \ref{sec:decayrate} will be devoted to the study of large time behaviors of the generalized framework \eqref{eq:gHS1}-\eqref{eq:gHS3}.
In Section \ref{sec:characteristicsmain1}, we will first recall the explicit formulae for the generalized characteristics and the global-in-time conservative solutions $(u,\mu)$. Then we will prove Theorem \ref{thm:mainthm1} based on them. In Section~\ref{sec:decayrate}, we will first derive the optimal error estimates in $L^{\infty}(\mathbb{R})$ and $\dot{H}^1(\mathbb{R})$ respectively, under the assumptions on the tails of initial energy measure $\bar{\mu}$; then we will also study the pointwise behavior of conservative solutions. In particular, Theorem \ref{thm:mainthm2} and Theorem \ref{thm:mainthm3} will be shown in this section. 
In Section~\ref{sec: another form}, we will study the asymptotic behavior of conservative solutions to the generalized framework~\eqref{eq:gHS21}-\eqref{eq:gHS23} of \eqref{eq:HSanother}, especially we will prove Theorem \ref{mainthm:pointwiselimit2}.

\section{Asymptotic behavior and expansions in $L^{\infty}(\mathbb{R})$ and $\dot{H}^1(\mathbb{R})$}\label{sec:characteristicsmain1}

In this section we will first recall the unique existence
%, which was previously obtained by the authors in \cite{gao2021regularity}, 
of conservative solutions and their global characteristics for the generalized framework \eqref{eq:gHS1}-\eqref{eq:gHS3} of the Hunter-Saxton equation~\eqref{eq:HS}, and then prove Theorem \ref{thm:mainthm1}.

\subsection{Preliminaries} 
Recall the following definition of conservative  solutions to the generalized framework \eqref{eq:gHS1}-\eqref{eq:gHS3} of the Hunter-Saxton equation from \cite[Definition 1.2]{gao2021regularity}:
\begin{definition}[Conservative solutions]\label{def:weak}
Let $(\bar{u},\bar{\mu})\in \mathcal{D}$ be a given initial data, where the space $\mathcal{D}$ was defined in Definition~\ref{def:D}. The pair $(u(t), \mu(t))$ is said to be a global-in-time conservative solution to the generalized framework \eqref{eq:gHS1}-\eqref{eq:gHS3} subject to the initial data $(\bar{u},\bar{\mu})$, if the pair $(u(t),\mu(t))$ satisfies all of the following:
\begin{enumerate}
\item[(i)] $u\in C(\mathbb{R};C_b(\mathbb{R}))\cap C^{1/2}_{loc}(\mathbb{R}\times\mathbb{R})$, $u_t\in L_{loc}^2(\mathbb{R}\times\mathbb{R})$, $u_x(\cdot, t) \in L^2(\mathbb{R})$ for all $t \in \mathbb{R}$,  and $\mu\in C(\mathbb{R};\mathcal{M}_+(\mathbb{R}))$;
\item[(ii)] $(u(\cdot,0),\mu(0))=(\bar{u},\bar{\mu})$, and $\di\mu(t)={u}_x^2(x,t)\di x$ for a.e. $t\in\mathbb{R}$;
\item[(iii)] the equation
\begin{align}\label{eq:weakformula}
\int_{\mathbb{R}}\int_{\mathbb{R}}u\phi_t-\phi\left(uu_x-\frac{1}{2}F\right)\di x\di t=0
\end{align}
holds for all $\phi\in C_c^\infty(\mathbb{R}\times \mathbb{R})$; the function $F(x,t)$ is defined by  $F(x,t) :=\int_{-\infty}^x\di \mu(t)$; 
\item[(iv)] the conservation of energy
\begin{equation}\label{eq:fourth}
\int_{\mathbb{R}}\int_{\mathbb{R}} \left( \phi_{t} + u \phi_{x}\right) \di \mu(t) \di t =0 
\end{equation}
holds for all $\phi\in C^{\infty}_c(\mathbb{R}\times \mathbb{R})$; and
\item[(v)] Equation~\eqref{eq:gHS3} holds for all $t \in \mathbb{R}$. 
\end{enumerate}
The condition (v) above is equivalent to $(u(t), \mu(t)) \in \mathcal{D}$ for all $t \in \mathbb{R}$.   
\end{definition}
In the following, the main tools for studying the large time asymptotic behaviors of conservative solutions will be the explicit formulae for the  globally-in-time well-defined (generalized) characteristics. We will briefly recall the generalized characteristics below, and state the existence and uniqueness results in \cite{gao2021regularity}.  
Consider an initial data $(\bar{u},\bar{\mu})\in\mathcal{D}$. For any $\alpha\in\mathbb{R}$, we can  define the function $\bar{x}(\alpha)$ %on $\mathbb{R}$ 
via
\begin{align}\label{eq:barx1}
\bar{x}(\alpha)+\bar{\mu}((-\infty, \bar{x}(\alpha)))\leq \alpha \leq \bar{x}(\alpha)+\bar{\mu}((-\infty, \bar{x}(\alpha)]). 
\end{align}
It follows directly from the above definition of $\bar{x}$ that $\bar{x}(\alpha)\le \alpha$ for all $\alpha\in\mathbb{R}$, and $\bar{x}(\alpha)$ is a nondecreasing and Lipschitz continuous function with Lipschitz constant bounded by 1; see \cite[Proposition 2.1]{gao2021regularity} for the verification of these facts.
Then we further define $y(\alpha, t)$ as follows: for any $\alpha$, $t\in\mathbb{R}$,
\begin{align}\label{eq:xbarbeta}
y(\alpha,t):=\bar{x}(\alpha)+\bar{u}(\bar{x}(\alpha))t+\frac{t^2}{4}(\alpha-\bar{x}(\alpha)).
\end{align}
For any $t\in\mathbb{R}$, the conservative solution $(u(x,t),\mu(t))$ is then given by 
\begin{align}\label{eq:measuresolutionu}
u(x,t):=\frac{\partial}{\partial t}y(\alpha,t)=\bar{u}(\bar{x}(\alpha))+\frac{t}{2}(\alpha-\bar{x}(\alpha))~\textrm{ for }~ x=y(\alpha,t),
\end{align}
and
\begin{equation}\label{eq:measuresolutionmu}
\mu(t):=y(\cdot,t)\# (f\di \alpha),
\end{equation}
where $f(\alpha) := 1-\bar{x}'(\alpha)$. The following theorem was proved in \cite[Section 3]{gao2021regularity}:
\begin{theorem}[Existence and uniqueness]\label{thm:main}
Let $(\bar{u},\bar{\mu})\in \mathcal{D}$ be a given initial data. Let $u$ and $\mu$ be defined by \eqref{eq:measuresolutionu} and \eqref{eq:measuresolutionmu} respectively. Then $(u(t), \mu(t))$ is the unique global-in-time conservative solution to the generalized framework \eqref{eq:gHS1}-\eqref{eq:gHS3} subject to the initial data $(\bar{u},\bar{\mu})$, in the sense of Definition~\ref{def:weak}. 
\end{theorem}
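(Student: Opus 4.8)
The plan is to split the argument into an \emph{existence} part — verifying that the pair $(u,\mu)$ given by \eqref{eq:measuresolutionu}--\eqref{eq:measuresolutionmu} satisfies every clause of Definition~\ref{def:weak} — and a \emph{uniqueness} part, in which an arbitrary conservative solution with the same initial data is shown to admit the same Lagrangian representation and therefore to coincide with this one. The whole proof is organized around the principle that, in the Lagrangian variable $\alpha$, the equations collapse to a trivial second order ODE for $y$ together with a transport identity for the left cumulative energy $F$.

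For existence I would first record the structural facts about the Lagrangian flow. Using the properties of $\bar x$ recalled above (nondecreasing, $1$-Lipschitz) together with the compatibility condition $\di\bar\mu_{ac}=\bar u_x^2\di x$ built into $\mathcal D$, the central computational lemma is the pointwise inequality
\[
\partial_\alpha y(\alpha,t)\;\ge\;\Bigl(\sqrt{\bar x'(\alpha)}-\tfrac{|t|}{2}\sqrt{\,1-\bar x'(\alpha)\,}\Bigr)^2\;\ge\;0\qquad\text{for a.e. }\alpha\text{ and every }t,
\]
which follows from the constraint $\bar x'(\alpha)\bigl(\bar u_x(\bar x(\alpha))\bigr)^2\le 1-\bar x'(\alpha)$ — obtained by differentiating \eqref{eq:barx1} and using $\di\bar\mu_{ac}=\bar u_x^2\di x$ — and completing the square in $\partial_\alpha y=\bar x'(\alpha)\bigl(1+\bar u_x(\bar x(\alpha))t\bigr)+\tfrac{t^2}{4}\bigl(1-\bar x'(\alpha)\bigr)$. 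This makes $\alpha\mapsto y(\alpha,t)$ nondecreasing and onto $\mathbb R$ for each fixed $t$, so that $u(x,t)$ in \eqref{eq:measuresolutionu} is well defined (constant on the level sets of $y(\cdot,t)$), and it gives at once the bounds $0\le\alpha-\bar x(\alpha)\le\bar\mu(\mathbb R)$ and $|u(x,t)|\le\|\bar u\|_\infty+\tfrac{|t|}{2}\bar\mu(\mathbb R)$; joint continuity, local $C^{1/2}$ regularity, the $L^2$ bound on $u_x(\cdot,t)$, and $\mu\in C(\mathbb R;\mathcal M_+)$ then follow from the explicit formulae and continuity of push-forwards under uniformly continuous maps, so clause (i) holds. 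At $t=0$, $y(\alpha,0)=\bar x(\alpha)$ yields $u(\cdot,0)=\bar u$, and the change-of-variables identity $\bar x\#\bigl((1-\bar x')\di\alpha\bigr)=\bar\mu$ — essentially a restatement of \eqref{eq:barx1} — gives $\mu(0)=\bar\mu$. For the distributional identities, substituting $x=y(\alpha,t)$ into \eqref{eq:weakformula}, using $u(y(\alpha,t),t)=\partial_t y$, the transport identity $F(y(\alpha,t),t)=\alpha-\bar x(\alpha)$, and the ODE $\partial_{tt}y(\alpha,t)=\tfrac12(\alpha-\bar x(\alpha))$, lets the integrand telescope after one integration by parts in $t$; \eqref{eq:fourth} is even more direct, since $\partial_t\bigl[\phi(y(\alpha,t),t)\bigr]=\phi_t+u\phi_x$ along the flow and $\phi$ has compact support. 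Finally, decomposing $\mu(t)=y(\cdot,t)\#(f\di\alpha)$ according to $\{\partial_\alpha y(\cdot,t)>0\}$ and $\{\partial_\alpha y(\cdot,t)=0\}$ and computing $u_x=\partial_t\partial_\alpha y/\partial_\alpha y$ on the first set yields $\di\mu_{ac}(t)=u_x^2\di x$ for \emph{every} $t$ (clause (v)), while the set of times for which $\{\partial_\alpha y(\cdot,t)=0,\ f>0\}$ carries positive mass is at most countable, which together with the $t=0$ computation gives clause (ii).

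For uniqueness, the plan is to show that any conservative solution $(\tilde u,\tilde\mu)$ with the same initial data induces, through its own characteristics, a Lagrangian flow $\tilde y(\alpha,t)$ solving the same second order ODE with the same initial position $\bar x(\alpha)$ and initial velocity $\bar u(\bar x(\alpha))$; ODE uniqueness then forces $\tilde y\equiv y$, hence $\tilde u=u$ and $\tilde\mu=\mu$. Concretely one must extract from the distributional relations \eqref{eq:weakformula}--\eqref{eq:fourth} that along $\tilde y$ one has $\tfrac{d}{dt}\tilde u=\tfrac12\tilde F$ and that $\tilde F$ composed with the flow is time-independent and equal to $\alpha-\bar x(\alpha)$; the regularity in Definition~\ref{def:weak}(i) ($C^{1/2}$ in space-time, $u_t\in L^2_{loc}$, weak continuity of $\mu$) is exactly what legitimizes these manipulations and the construction of the characteristics for measure-valued data.

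I expect the main obstacle to be this uniqueness step — passing rigorously from the measure-valued, distributional formulation back to pointwise ODEs along a well-defined family of characteristics, while handling the at most countably many times at which the singular part of $\tilde\mu$ does not vanish. On the existence side the only genuinely delicate point is clause (v): identifying the absolutely continuous part of the push-forward $y(\cdot,t)\#(f\di\alpha)$ and matching its density to $u_x^2$ at every time requires the careful structure theory of the set $\{\partial_\alpha y(\cdot,t)=0\}$ where energy concentrates.
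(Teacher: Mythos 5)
The first thing to note is that this paper does not actually prove Theorem~\ref{thm:main}: it is imported verbatim from the authors' earlier work (\cite[Section 3]{gao2021regularity}), so there is no in-paper proof to match yours against. Your outline does follow the same generalized-characteristics route as that reference, and the existence half is essentially sound: the key positivity lemma is correct, since differentiating \eqref{eq:barx1} gives $\bar x'(\alpha)\,\bar u_x^2(\bar x(\alpha))\le 1-\bar x'(\alpha)$ a.e., whence $|\bar u_x(\bar x(\alpha))|\,\bar x'(\alpha)\le\sqrt{\bar x'(\alpha)}\sqrt{1-\bar x'(\alpha)}$ and the completed square $\partial_\alpha y\ge\bigl(\sqrt{\bar x'}-\tfrac{|t|}{2}\sqrt{1-\bar x'}\bigr)^2$ follows; monotonicity and surjectivity of $y(\cdot,t)$, the well-definedness of $u$ in \eqref{eq:measuresolutionu}, and the verification of clauses (i)--(iv) of Definition~\ref{def:weak} then go through as you describe. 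The identification of $\mu_{ac}(t)$ with $u_x^2\,\di x$ at \emph{every} time (clause (v)) needs the equality case of the above inequality on $\{\partial_\alpha y>0\}$ and a structure analysis of $\{\partial_\alpha y=0\}$, which you correctly flag but do not carry out.

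The genuine gap is the uniqueness half. You state the right target --- that any conservative solution $(\tilde u,\tilde\mu)$ with the same data admits a Lagrangian flow $\tilde y(\alpha,t)$ satisfying $\partial_{tt}\tilde y=\tfrac12(\alpha-\bar x(\alpha))$ with the same initial position and velocity --- but you do not establish the two facts on which this rests: (a) that characteristics $t\mapsto\tilde y(\alpha,t)$ with $\partial_t\tilde y=\tilde u(\tilde y,t)$ exist and are unique for a velocity field that is merely $C^{1/2}_{loc}$ (this requires exploiting the specific monotone/transport structure, not Picard--Lindel\"of), and (b) that $t\mapsto\tilde F(\tilde y(\alpha,t),t)$ is constant and equals $\alpha-\bar x(\alpha)$, which must be extracted from the distributional identity \eqref{eq:fourth} and survives the countably many times at which $\tilde\mu(t)$ has atoms (where $\int_{-\infty}^{x}\di\tilde\mu(t)$ is itself ambiguous). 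These are precisely the technical core of the uniqueness argument in \cite{gao2021regularity}; acknowledging them as ``the main obstacle'' does not discharge them, so as written the proposal establishes existence in outline but not uniqueness.
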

Using these explicit formulae, one can also effectively study the regularity structure of conservative solutions to the generalized framework \eqref{eq:gHS1}-\eqref{eq:gHS3}; see \cite[Theorem 2.1]{gao2021regularity} for more details.

\subsection{Asymptotic behavior and expansions of conservative solutions}\label{sec:Asymptotic behavior}
In this subsection, we are going to prove Theorem~\ref{thm:mainthm1}. Consider a solution $(u(\cdot,t),\mu(t)) \in \mathcal{D}$ that solves  \eqref{eq:gHS1}-\eqref{eq:gHS3}, in the sense of Definition~\ref{def:weak}. As we briefly analyzed in the introduction (i.e., Section~\ref{sec:intro}), the first step towards the large time asymptotic behavior is to find $v(x):=\lim_{t \to \pm\infty}\frac{2}{t}{u\left(\frac{t^2}{4}x,t\right)}$. Let us begin with the expression for $\frac{2}{t}{u\left(\frac{t^2}{4}x,t\right)}$.
According to \eqref{eq:measuresolutionu}, we have
\begin{equation*}
\frac{2}{t}{u\left(\frac{t^2}{4}x,t\right)} = \frac{2}{t} \bar{u}(\bar{x}(\alpha)) + 
\alpha-\bar{x}(\alpha),
\end{equation*}
where $\alpha$ is chosen so that $\frac{t^2}{4}x=y(\alpha,t)$.
From \eqref{eq:xbarbeta}, we have the following relation between $x$ and $\alpha$:
\begin{equation*}
\frac{t^2}{4}x= y(\alpha,t)=\bar{x}(\alpha)+\bar{u}(\bar{x}(\alpha))t+\frac{t^2}{4}(\alpha-\bar{x}(\alpha)),
\end{equation*}
or equivalently,
\begin{equation}\label{eq:relation between x and alpha}
x= \frac{4}{t^2}y(\alpha,t)=\frac{4}{t^2}\bar{x}(\alpha)+ \frac{4}{t}\bar{u}(\bar{x}(\alpha))+ (\alpha-\bar{x}(\alpha)).
\end{equation}
Now, let us introduce the concept of pseudo inverse function for \eqref{eq:relation between x and alpha} as follows, which will be useful in the mathematical analysis below.
\begin{definition}\label{def:pseudo}
For any $t\neq 0$, let $\alpha(\cdot,t):\mathbb{R}\to\mathbb{R}$ be the pseudo inverse function defined by
\begin{align}\label{eq:defalpha}
\alpha(x,t): =\inf\left\{\alpha\in\mathbb{R}:\;x=\frac{4}{t^2}y(\alpha,t)\right\},
\end{align}
where the function $y(\alpha,t)$ is defined by \eqref{eq:xbarbeta}.
\end{definition}
\noindent
Since $y$ is continuous, we actually have $x=\frac{4}{t^2}y(\alpha(x,t),t)$.
Then the first aim becomes to  find the limit of 
\begin{equation*}
\frac{2}{t}{u\left(\frac{t^2}{4}x,t\right)}= \frac{2}{t} \bar{u}(\bar{x}(\alpha(x,t))) + 
\alpha(x,t)-\bar{x}(\alpha(x,t)),
%~\textrm{ for }~ x= \frac{4}{t^2}y(\alpha(x,t),t)
\end{equation*}
as $t$ tends to positive or negative infinity. Let us recall that the initial data $(\bar{u},\bar{\mu})$ is assumed to be in $\mathcal{D}$, so it follows from part (i) of Definition~\ref{def:D} that $\bar{u}$ is bounded, and hence, we have
\begin{equation}\label{eq:limit}
v(x):=\lim\limits_{t\to\pm \infty}\frac{2}{t}{u\left(\frac{t^2}{4}x,t\right)}= \lim\limits_{t\to \pm\infty}
[\alpha(x,t)-\bar{x}(\alpha(x,t))].
\end{equation}
On the other hand, it follows from Equation~\eqref{eq:relation between x and alpha} that 
\begin{equation}\label{eq:charater relation}
\alpha(x,t)-\bar{x}(\alpha(x,t)) = x - \frac{4}{t^2}\bar{x}(\alpha(x,t))- \frac{4}{t}\bar{u}(\bar{x}(\alpha(x,t))).
\end{equation}
The relation \eqref{eq:charater relation} is of great importance, this relation and its variants will be  used very often to derive many useful consequences, including the asymptotic behaviors of  $\alpha(x,t)$ and $\bar{x}(\alpha(x,t))$; see the proofs of Lemma~\ref{lem:important} and Lemma~\ref{lemma:properties} below %, and Lemma \ref{lemma:propertiesright}  
for instance.
Now, substituting \eqref{eq:charater relation} into \eqref{eq:limit}, and using the boundedness of $\bar{u}$ again, we also have
\begin{equation}\label{eq:asy}
v(x)=\lim\limits_{t\to\pm \infty}\frac{2}{t}{u\left(\frac{t^2}{4}x,t\right)} = x - \lim\limits_{t\to\pm \infty} \frac{4}{t^2}\bar{x}(\alpha(x,t)) = x - \lim\limits_{t\to \pm\infty} \frac{4}{t^2}\alpha(x,t),
\end{equation}
where the last identity follows from the fact that 
\begin{align}\label{eq:diffbetalphaandbarx}
\bar{x}(\alpha(x,t))+\bar{\mu}((-\infty, \bar{x}(\alpha(x,t))))\leq \alpha(x,t) \leq \bar{x}(\alpha(x,t))+\bar{\mu}((-\infty, \bar{x}(\alpha(x,t))])
\end{align}
and $\bar{\mu}(\mathbb{R})<+\infty$. 
One then sees that if we can find 
$\lim\limits_{t\to \pm\infty}[\alpha(x,t)-\bar{x}(\alpha(x,t))]$ or 
$ \lim\limits_{t\to \pm\infty} \frac{4}{t^2}\alpha(x,t)$, 
we can use  \eqref{eq:limit} or \eqref{eq:asy} to find the value of $v$. This indeed can be done and we have the following important Lemma:
\begin{lemma}\label{lem:important}
For the pseudo inverse function $\alpha(x,t)$ defined in Definition \ref{def:pseudo}, we have
\begin{equation}\label{eq:differencealphaxt}
\lim\limits_{t\to \pm\infty}
[\alpha(x,t)-\bar{x}(\alpha(x,t))]=\left\{
\begin{aligned}
&0, \quad x< 0,\\
&x, \quad 0\le x\le \bar{\mu}(\mathbb{R}),\\
&\bar{\mu}(\mathbb{R}), \quad x > \bar{\mu}(\mathbb{R}),
\end{aligned} \right.
\end{equation}
and
\begin{equation}\label{eq:grwothalphaxt}
\lim\limits_{t\to \pm\infty} \frac{4}{t^2}{\bar{x}(\alpha(x,t))}=\lim\limits_{t\to \pm\infty} \frac{4}{t^2}\alpha(x,t)=\left\{
\begin{aligned}
&x, \quad x< 0,\\
&0, \quad 0\le x\le \bar{\mu}(\mathbb{R}),\\
&x-\bar{\mu}(\mathbb{R}), \quad x > \bar{\mu}(\mathbb{R}).
\end{aligned} \right.
\end{equation}
\end{lemma}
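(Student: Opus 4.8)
The plan is to exploit the two structural identities already in hand. Writing $\bar{x}_t$ for $\bar{x}(\alpha(x,t))$, relation \eqref{eq:charater relation} reads
\begin{equation*}
[\alpha(x,t)-\bar{x}_t] + \frac{4}{t^2}\bar{x}_t + \frac{4}{t}\bar{u}(\bar{x}_t) = x ,
\end{equation*}
and the sandwich \eqref{eq:diffbetalphaandbarx} gives $\bar{\mu}((-\infty,\bar{x}_t)) \le \alpha(x,t)-\bar{x}_t \le \bar{\mu}((-\infty,\bar{x}_t])$, hence in particular the uniform a priori bound $0 \le \alpha(x,t)-\bar{x}_t \le \bar{\mu}(\mathbb{R})$. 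Since $\bar{u}$ is bounded, the displayed identity immediately shows that $\frac{4}{t^2}\bar{x}_t$ stays bounded as $t\to\pm\infty$ and that $\frac{4}{t^2}[\alpha(x,t)-\bar{x}_t]\to 0$; therefore it is enough to compute $\lim_{t\to\pm\infty}\frac{4}{t^2}\bar{x}_t$, after which the equality of the three limits in \eqref{eq:grwothalphaxt} is automatic, and \eqref{eq:differencealphaxt} follows by substituting back into the identity and using $\frac{4}{t}\bar{u}(\bar{x}_t)\to 0$.

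The remaining input is a dichotomy coming from finiteness of $\bar{\mu}$: since $\bar{\mu}((-\infty,s))\to\bar{\mu}(\mathbb{R})$ as $s\to+\infty$ and $\bar{\mu}((-\infty,s])\to 0$ as $s\to-\infty$, the sandwich forces $\alpha(x,t_n)-\bar{x}_{t_n}\to\bar{\mu}(\mathbb{R})$ along any sequence with $\bar{x}_{t_n}\to+\infty$, and $\alpha(x,t_n)-\bar{x}_{t_n}\to 0$ along any sequence with $\bar{x}_{t_n}\to-\infty$. With this, I would run a subsequence argument: fix $x$, take $t_n\to+\infty$ (the case $t_n\to-\infty$ being identical), and by Bolzano--Weierstrass pass to a subsequence along which $a_n:=\frac{4}{t_n^2}\bar{x}_{t_n}\to L\in\mathbb{R}$. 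If $L>0$, then $\bar{x}_{t_n}\to+\infty$, the dichotomy gives $\alpha-\bar{x}\to\bar{\mu}(\mathbb{R})$, and the identity yields $L=x-\bar{\mu}(\mathbb{R})$, which is consistent with $L>0$ only if $x>\bar{\mu}(\mathbb{R})$. If $L<0$, then $\bar{x}_{t_n}\to-\infty$, the dichotomy gives $\alpha-\bar{x}\to 0$, and the identity yields $L=x$, consistent with $L<0$ only if $x<0$. If $L=0$, the identity directly gives $\alpha-\bar{x}\to x$, which by the a priori bound $0\le\alpha-\bar{x}\le\bar{\mu}(\mathbb{R})$ is possible only if $0\le x\le\bar{\mu}(\mathbb{R})$. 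Matching these three cases against the position of $x$ shows that $L$ is uniquely determined and equals $x$, $0$, or $x-\bar{\mu}(\mathbb{R})$ according to whether $x<0$, $0\le x\le\bar{\mu}(\mathbb{R})$, or $x>\bar{\mu}(\mathbb{R})$; since every subsequential limit of the bounded quantity $\frac{4}{t^2}\bar{x}_t$ equals this common value, the limit itself exists and equals it. This is \eqref{eq:grwothalphaxt}, and \eqref{eq:differencealphaxt} then drops out of the displayed identity.

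I expect the only delicate point to be keeping the case analysis exhaustive and, in particular, handling the boundary values $x=0$ and $x=\bar{\mu}(\mathbb{R})$: this is precisely where the a priori bound $0\le\alpha(x,t)-\bar{x}(\alpha(x,t))\le\bar{\mu}(\mathbb{R})$ does the work, ruling out $L\neq 0$ exactly when $x$ lies strictly inside $[0,\bar{\mu}(\mathbb{R})]$ and ruling out $L=0$ exactly when $x$ lies strictly outside it. No monotonicity of $t\mapsto\alpha(x,t)$ seems to be available, which is why the argument must go through subsequential limits rather than a direct monotone-limit computation; beyond that, the proof is a short deduction from the explicit characteristic formulae.
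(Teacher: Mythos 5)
Your proposal is correct, and it takes a genuinely different route from the paper's proof. The paper splits on the position of $x$ into four cases ($x<0$, $x>\bar{\mu}(\mathbb{R})$, $0<x<\bar{\mu}(\mathbb{R})$, and the boundary values $x=0$, $x=\bar{\mu}(\mathbb{R})$), and in each case runs a contradiction argument to pin down whether $\alpha(x,t)$ diverges to $-\infty$, diverges to $+\infty$, or stays bounded; it establishes \eqref{eq:differencealphaxt} first and then reads off \eqref{eq:grwothalphaxt} from \eqref{eq:charater relation}, with the boundary values requiring a separate and slightly more delicate treatment, plus a separate disposal of the trivial case $\bar{\mu}(\mathbb{R})=0$. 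You instead observe that \eqref{eq:charater relation} together with the uniform bound $0\le\alpha(x,t)-\bar{x}(\alpha(x,t))\le\bar{\mu}(\mathbb{R})$ from \eqref{eq:diffbetalphaandbarx} makes $\frac{4}{t^2}\bar{x}(\alpha(x,t))$ bounded, extract a subsequential limit $L$ by Bolzano--Weierstrass, and run a single trichotomy on the sign of $L$ using the dichotomy $\bar{x}\to\pm\infty\Rightarrow\alpha-\bar{x}\to\bar{\mu}(\mathbb{R})$ or $0$; the position of $x$ then forces a unique value of $L$ in every case, including the boundary values and the degenerate case $\bar{\mu}(\mathbb{R})=0$, with no extra work. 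Your version is shorter and more unified; what it gives up is the explicit divergence statements ($\alpha(x,t)\to\mp\infty$ for $x<0$ or $x>\bar{\mu}(\mathbb{R})$) that the paper's case analysis produces as a by-product, but since only the stated conclusions of the lemma are used downstream, nothing is lost. The one point worth spelling out in a final write-up is the dichotomy itself: the lower bound $\bar{\mu}((-\infty,s))$ and the upper bound $\bar{\mu}((-\infty,s])$ in \eqref{eq:diffbetalphaandbarx} both tend to $\bar{\mu}(\mathbb{R})$ as $s\to+\infty$ and to $0$ as $s\to-\infty$ precisely because $\bar{\mu}$ is a finite measure, which is what makes the squeeze work.
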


%With this lemma in hand, Theorem \ref{thm:mainthm1} will be an easy consequence as  we shall see later.
\begin{proof}%[Proof of Lemma \ref{lem:important}] 
In the trivial case $\bar{\mu}(\mathbb{R})=0$, we actually have $\bar{u}\equiv C$ for some constant $C\in\mathbb{R}$. Furthermore, it follows from \eqref{eq:barx1} that
$\bar{x}(\alpha)\equiv \alpha$, so $\alpha(x,t)-\bar{x}(\alpha(x,t))=\alpha(x,t)-\alpha(x,t)\equiv0$, and hence, \eqref{eq:differencealphaxt} holds. In addition, using \eqref{eq:xbarbeta}, we also obtain $y(\alpha,t)=\alpha+Ct$, which and \eqref{eq:defalpha} imply that $\alpha(x,t)=\frac{t^2x}{4}-Ct$, and hence, \eqref{eq:grwothalphaxt} follows with $\bar{\mu}(\mathbb{R})=0$.

From now on, we will, without loss of generality, assume that $\bar{\mu}(\mathbb{R}) > 0$.
%We first prove the limit for $t\to+\infty$. 
The proof will be separated into the following four cases.

\noindent\textbf{Case 1 ($x<0$).}
For any given $x<0$, we claim that $\lim\limits_{t\to \pm\infty} \alpha(x,t)= -\infty$. Seeking for a contradiction, we assume that there exists a constant $M>0$ and a sequence $t_n \to \pm\infty$ such that $\alpha(x,t_n)\ge -M$ for all $n\in\mathbb{N}$. It follows from \eqref{eq:charater relation} that for each  $t_n$,
\begin{equation}\label{eq: character relation for tn}
0\leq\alpha(x,t_n)-\bar{x}(\alpha(x,t_n)) = x - \frac{4}{t_n^2}\bar{x}(\alpha(x,t_n))- \frac{4}{t_n}\bar{u}(\bar{x}(\alpha(x,t_n))),
\end{equation}
where the first inequality follows from the nonnegativity of $\bar{\mu}$ and the first inequality in \eqref{eq:diffbetalphaandbarx}.  Furthermore, using the second inequality in \eqref{eq:diffbetalphaandbarx}, we also have
\begin{align*}
\bar{x}(\alpha(x,t_n)) \ge\alpha(x,t_n) - \bar{\mu}((-\infty, \bar{x}(\alpha(x,t_n))]) \ge -M - \bar{\mu}(\mathbb{R}).
\end{align*}
Hence, taking the limit superior in \eqref{eq: character relation for tn} and using the boundedness of $\bar{u}$, we finally obtain
\begin{equation*}
\begin{aligned}
0\le \limsup_{n\to +\infty} \left[\alpha(x,t_n)-\bar{x}(\alpha(x,t_n)) \right] &=  \limsup_{n\to +\infty} \left( x - \frac{4}{t_n^2}\bar{x}(\alpha(x,t_n))- \frac{4}{t_n}\bar{u}(\bar{x}(\alpha(x,t_n))) \right)\\
& \le x + \limsup_{n\to +\infty} \frac{4}{t_n^2}( M + \bar{\mu}(\mathbb{R})) = x <0,
\end{aligned}
\end{equation*}
which is a contradiction. Therefore, for any given $x<0$, we have just shown that $\lim\limits_{t\to \pm\infty} \alpha(x,t)= -\infty$. 
A direct consequence of \eqref{eq:diffbetalphaandbarx}  is that  $\left|\alpha(x,t)-\bar{x}(\alpha(x,t))\right|\leq \bar{\mu}(\mathbb{R})<+\infty$, so the limit $\lim\limits_{t\to \pm\infty} \alpha(x,t)= -\infty$ immediately implies $\lim\limits_{t\to \pm \infty} \bar{x}(\alpha(x,t))= -\infty$, and hence, $\lim\limits_{t\to \pm\infty} \bar{\mu}((-\infty, \bar{x}(\alpha(x,t))])= 0$. Another consequence of \eqref{eq:diffbetalphaandbarx} is that $0\leq\alpha(x,t)-\bar{x}(\alpha(x,t))\leq\bar{\mu}((-\infty, \bar{x}(\alpha(x,t))])$, so $\lim\limits_{t\to \pm\infty} \bar{\mu}((-\infty, \bar{x}(\alpha(x,t))])= 0$ and the squeeze theorem imply
$\lim\limits_{t\to \pm\infty} \left[\alpha(x,t)-\bar{x}(\alpha(x,t))  \right]=0$, which is \eqref{eq:differencealphaxt} indeed.
Then passing to the limit in \eqref{eq:charater relation} as $t\to\pm\infty$, we obtain
	  $\lim\limits_{t\to \pm\infty} \frac{4 \bar{x}(\alpha(x,t))}{t^2}=x$. Since $\left|\alpha(x,t)-\bar{x}(\alpha(x,t))\right|\leq \bar{\mu}(\mathbb{R})<+\infty$, we also have $ \lim\limits_{t\to \pm\infty} \frac{4\alpha(x,t)}{t^2}=x$. This shows \eqref{eq:grwothalphaxt}.

\noindent\textbf{Case 2 ($x>\bar{\mu}(\mathbb{R})$).}
For any given $x> \bar{\mu}(\mathbb{R})$, we claim that $\lim\limits_{t\to \pm\infty} \alpha(x,t) = +\infty$. Seeking for a contradiction, we assume that there exists a constant $M>0$ and  a sequence $t_n \to \pm\infty$ such that  $\alpha(x,t_n)\le M$. Then it follows from the first inequality in \eqref{eq:diffbetalphaandbarx}  and the nonnegativity of $\bar{\mu}$ that for any $t_n$, 
\begin{align*}
\bar{x}(\alpha(x,t_n)) \le\alpha(x,t_n) - \bar{\mu}((-\infty, \bar{x}(\alpha(x,t_n)))) \le M.
\end{align*}
Hence, using the second inequality of \eqref{eq:diffbetalphaandbarx}, Equation~\eqref{eq:charater relation} and the boundedness of $\bar{u}$, we have
\begin{equation*}
\begin{aligned}
\bar{\mu}(\mathbb{R}) &\ge \limsup_{n\to +\infty}[\alpha(x,t_n)-\bar{x}(\alpha(x,t_n))] = \limsup_{n\to +\infty} \left(x - \frac{4}{t_n^2}\bar{x}(\alpha(x,t_n))- \frac{4}{t_n}\bar{u}(\bar{x}(\alpha(x,t_n))) \right)\\
&\ge x + \limsup_{n\to +\infty} \frac{4}{t_n^2}( -M )  =x > \bar{\mu}(\mathbb{R}),
&
\end{aligned}
\end{equation*}
which is a contradiction. Thus, for any given $x > \bar{\mu}(\mathbb{R})$, we  have proved that $\lim\limits_{t\to \pm\infty} \alpha(x,t)=+\infty$. 
Similar to Case 1, we can use $	\lim\limits_{t\to \pm\infty} \alpha(x,t)=+\infty$ and \eqref{eq:diffbetalphaandbarx} to show that $\lim\limits_{t\to \pm\infty} \bar{x}(\alpha(x,t))= +\infty$, and hence, $\lim\limits_{t\to \pm\infty} \bar{\mu}((-\infty, \bar{x}(\alpha(x,t))))=\bar{\mu}(\mathbb{R})$. Using \eqref{eq:diffbetalphaandbarx} again and the squeeze theorem, 
%again that $\bar{\mu}((-\infty, \bar{x}(\alpha(x,t))))\leq\alpha(x,t)-\bar{x}(\alpha(x,t))\leq \bar{\mu}(\mathbb{R})$, so 
we  have
$\lim\limits_{t\to \pm\infty} \left[\alpha(x,t)-\bar{x}(\alpha(x,t))  \right]=\bar{\mu}(\mathbb{R})$, which is \eqref{eq:differencealphaxt}. Then passing to the limit in \eqref{eq:charater relation} as $t\to\pm\infty$, we finally have 
 $\lim\limits_{t\to \pm\infty} \frac{4 \bar{x}(\alpha(x,t))}{t^2}$ = $ \lim\limits_{t\to \pm\infty} \frac{4\alpha(x,t)}{t^2}=x-\bar{\mu}(\mathbb{R})$, which is \eqref{eq:grwothalphaxt}.

%Finally, we conclude from \eqref{eq:limit} that 
%\begin{equation*}
%\begin{aligned}
%v(x)=\lim\limits_{t\to\pm \infty}\frac{2}{t}{u\left(\frac{t^2}{4}x,t\right)} =\lim\limits_{t\to +\infty} \left(\alpha(x,t)-\bar{x}(\alpha(x,t))  \right) 
%= \bar{\mu}(\mathbb{R}).
%\end{aligned}
%\end{equation*}

\noindent\textbf{Case 3 ($0<x<\mu(\mathbb{R})$).} For any given $x\in (0,\mu(\mathbb{R}))$, we claim that  $\alpha(x,t)$ is  bounded in $t$, namely there exists a positive number $M(x)$ depending on $x$ such that $|\alpha(x,t)| \le M(x)$ for all $t\in\mathbb{R}$. Seeking for a contradiction, we assume that there exists a sequence $t_n \to \pm \infty$ such that  $\alpha(x,t_n)\to -\infty$ or $\alpha(x,t_n)\to +\infty$. We consider the case that $\alpha(x,t_n)\to -\infty$  first, by using an argument similar to the argument used in Case 1. Since $\alpha(x,t_n)\to -\infty$, it follows from \eqref{eq:diffbetalphaandbarx} that $\bar{x}(\alpha(x,t_n)) \to -\infty$   and $\alpha(x,t_n)-\bar{x}(\alpha(x,t_n)) \to 0^+$. Now, for this particular sequence $\{t_n\}$, evaluating \eqref{eq:charater relation} at $t=t_n$, and then passing to the limit as $n\to +\infty$ and using the boundedness of $\bar{u}$, we have 
\begin{equation*}
\begin{aligned}
0 = \lim_{n\to +\infty} \left[\alpha(x,t_n)-\bar{x}(\alpha(x,t_n)) \right] &=  \lim_{n\to +\infty} \left( x - \frac{4}{t_n^2}\bar{x}(\alpha(x,t_n))- \frac{4}{t_n}\bar{u}(\bar{x}(\alpha(x,t_n))) \right)\ge x,
\end{aligned}
\end{equation*}
which contradicts the hypothesis $x>0$. Similarly, one can also use the same argument to show that the case $\alpha(x,t_n)\to +\infty$ will contradict with $x<\bar{\mu}(\mathbb{R})$. In conclusion, we have just verified that $|\alpha(x,t)| \le M(x)$ for all $0<x<\bar{\mu}(\mathbb{R})$, this also shows that $\bar{x}(\alpha(x,t)) $ is uniformly bounded in $t$, again by \eqref{eq:diffbetalphaandbarx}. 
Hence, we have $\lim\limits_{t\to \pm\infty} \frac{4 \bar{x}(\alpha(x,t))}{t^2}$ = $ \lim\limits_{t\to \pm\infty} \frac{4\alpha(x,t)}{t^2}=0$.
Passing to the limit in \eqref{eq:charater relation} as $t\to\pm\infty$, we obtain $\lim\limits_{t\to \pm\infty}
[\alpha(x,t)-\bar{x}(\alpha(x,t))]=x$.

%\begin{equation*}
%\begin{aligned}
%v(x) =\lim\limits_{t\to\pm \infty}\frac{2}{t}{u\left(\frac{t^2}{4}x,t\right)}  =  \lim\limits_{t\to +\infty} \left(x - \frac{4}{t^2}\bar{x}(\alpha(x,t))- \frac{4}{t}\bar{u}(\bar{x}(\alpha(x,t))) \right) 
%= x,
%\end{aligned}
%\end{equation*}
%where the last identity follows from the uniform (in $t$) boundedness of $\bar{x}(\alpha(x,t)) $  and $\bar{u}$.

\noindent\textbf{Case 4 ($x=0$ and $x=\mu(\mathbb{R})$).}
For $x=0$, we first show that 
the case $\alpha(0,t_n) \to +\infty$ for some sequence $t_n\to \pm\infty$ will not happen. Otherwise, similar to the arguments used in previous cases, it follows from \eqref{eq:diffbetalphaandbarx}, \eqref{eq:charater relation} and boundedness of $\bar{u}$ that 
\begin{equation*}
\begin{aligned}
\bar{\mu}(\mathbb{R}) = \lim_{n\to +\infty} \left[\alpha(0,t_n)-\bar{x}(\alpha(0,t_n)) \right]&=  \lim_{n\to +\infty} \left( 0 - \frac{4}{t_n^2}\bar{x}(\alpha(0, t_n))- \frac{4}{t_n}\bar{u}(\bar{x}(\alpha(0, t_n))) \right) \le 0,
\end{aligned}
\end{equation*}
which contradicts the assumption that $\bar{\mu}(\mathbb{R}) >0$. 
We have just proved that $\limsup_{t\to\pm\infty}\alpha(0,t)<+\infty$, and using \eqref{eq:diffbetalphaandbarx} again, we also have $\limsup_{t\to\pm\infty}\bar{x}(\alpha(0,t))<+\infty$. 

Next, we will prove \eqref{eq:differencealphaxt} at $x=0$ by contradiction as follows. Suppose that  \eqref{eq:differencealphaxt} does not hold at $x=0$. Since $\alpha-\bar{x}(\alpha)\geq0$ for any $\alpha\in\mathbb{R}$, there exists a positive constant $k$ and a sequence $t_n\to\pm\infty$ such that 
\begin{equation}\label{eq:Limitalpha-xbaralpha=k}
\lim_{n\to+\infty}[\alpha(0,t_n)-\bar{x}(\alpha(0,t_n))]=k>0.
\end{equation}
As in the argument in Case 3, one can apply \eqref{eq:diffbetalphaandbarx} to show that if $\liminf_{n\to+\infty}\bar{x}(\alpha(0,t_n))=-\infty$, then $\liminf_{n\to+\infty}[\alpha(0,t_n)-\bar{x}(\alpha(0,t_n))]=0$. Thus, \eqref{eq:Limitalpha-xbaralpha=k} indeed implies $\liminf_{n\to+\infty}\bar{x}(\alpha(0,t_n))>-\infty$. Together with $\limsup_{t\to\pm\infty}\bar{x}(\alpha(0,t))<+\infty$ that we have already shown in this case, there exists a constant $M>0$ such that
\[
	|\bar{x}(\alpha(0,t_n))|\leq M.
\]
Hence, evaluating \eqref{eq:charater relation} at $t=t_n$, and then passing to the limit as $n\to +\infty$, we obtain
\[
0<k=\lim_{n\to+\infty}[\alpha(0,t_n)-\bar{x}(\alpha(0,t_n))]=\lim_{n\to +\infty} \left( - \frac{4}{t_n^2}\bar{x}(\alpha(0, t_n))- \frac{4}{t_n}\bar{u}(\bar{x}(\alpha(0, t_n))) \right)=0,
\]
which is a contradiction. Therefore, the limit \eqref{eq:differencealphaxt} holds at $x=0$. Finally, the limit \eqref{eq:grwothalphaxt} at $x=0$ follows immediately from \eqref{eq:charater relation} and \eqref{eq:differencealphaxt}. 
In a completely same manner, one can also verify \eqref{eq:differencealphaxt} and \eqref{eq:grwothalphaxt} at $x=\bar{\mu}(\mathbb{R})$, namely $\lim\limits_{t\to \pm\infty}
[\alpha(\bar{\mu}(\mathbb{R}),t)-\bar{x}(\alpha(\bar{\mu}(\mathbb{R}),t))]=\bar{\mu}(\mathbb{R})$ and $\lim\limits_{t\to \pm\infty} \frac{4}{t^2}\alpha(\bar{\mu}(\mathbb{R}),t) =\lim\limits_{t\to \pm\infty} \frac{4}{t^2}\bar{x}(\alpha(\bar{\mu}(\mathbb{R}),t)) = 0$.
%
%\tcr{The proof needs to be revised.}{\color{gray}Therefore, for any sequence $t_n\to \pm\infty$, we must have either $\alpha(0,t_n) \to -\infty$ as $n\to +\infty$, or $\alpha(0,t_n)$ is a bounded sequence.
%If $\alpha(0,t_n) \to -\infty$, then using \eqref{eq:diffbetalphaandbarx} as in previous cases, one can show that 
%\begin{equation*}
%\lim\limits_{n\to +\infty}[\alpha(0,t_n)-\bar{x}(\alpha(0,t_n))] = 0.
%\end{equation*}
%On the other hand, if $\alpha(0,t_n)$ is a bounded sequence, then it follows from inequality \eqref{eq:diffbetalphaandbarx} that $\bar{x}(\alpha(0,t_n))$ is also bounded sequence, and hence,
%\begin{equation*}
%\lim\limits_{n\to +\infty} \frac{4}{t_n^2}\alpha(0,t_n) =\lim\limits_{n\to +\infty} \frac{4}{t_n^2}\bar{x}(\alpha(0,t_n)) = 0.
%\end{equation*}
%As a result, by \eqref{eq:charater relation}, in any case we always have $\lim\limits_{t\to \pm\infty}
%[\alpha(0,t)-\bar{x}(\alpha(0,t))]=0$ and $\lim\limits_{t\to \pm\infty} \frac{4}{t^2}\alpha(0,t) =\lim\limits_{t\to \pm\infty} \frac{4}{t^2}\bar{x}(\alpha(0,t)) = 0$.
%In a completely same manner, one can also verify that $\lim\limits_{t\to \pm\infty}
%[\alpha(\bar{\mu}(\mathbb{R}),t)-\bar{x}(\alpha(\bar{\mu}(\mathbb{R}),t))]=\bar{\mu}(\mathbb{R})$ and $\lim\limits_{t\to \pm\infty} \frac{4}{t^2}\alpha(\bar{\mu}(\mathbb{R}),t) =\lim\limits_{t\to \pm\infty} \frac{4}{t^2}\bar{x}(\alpha(\bar{\mu}(\mathbb{R}),t)) = 0$.
%}
\end{proof}

Next, we consider the following difference
\begin{gather}\label{eq:diff}
u(x,t)-\frac{t}{2} v\left(\frac{4x}{t^2}\right)=\left\{
\begin{split}
&u(x,t),\quad x<0,\\
&u(x,t)-\frac{2}{t}x,\quad 0\leq x\leq \frac{t^2}{4}\bar{\mu}(\mathbb{R}),\\
&u(x,t)-\frac{t}{2}\bar{\mu}(\mathbb{R}),\quad x>\frac{t^2}{4}\bar{\mu}(\mathbb{R}).
\end{split}
\right.
\end{gather}
Evaluating \eqref{eq:diff} at $x=y(\alpha,t)$, and using the explicit formulae \eqref{eq:xbarbeta} and \eqref{eq:measuresolutionu} for $y$ and $u$, we eventually obtain
\begin{gather}\label{eq:difference}
u(y(\alpha,t),t)-\frac{t}{2} v\left(\frac{4y(\alpha,t)}{t^2}\right)=\left\{
\begin{split}
&\bar{u}(\bar{x}(\alpha))+\frac{t}{2}[\alpha-\bar{x}(\alpha)],\quad y(\alpha,t)<0,\\
&-\frac{2}{t}\bar{x}(\alpha)-\bar{u}(\bar{x}(\alpha)),\quad 0\leq y(\alpha,t)\leq \frac{t^2}{4}\bar{\mu}(\mathbb{R}),\\
&\bar{u}(\bar{x}(\alpha))+\frac{t}{2}[\alpha-\bar{x}(\alpha)]-\frac{t}{2}\bar{\mu}(\mathbb{R}),\quad y(\alpha,t)>\frac{t^2}{4}\bar{\mu}(\mathbb{R}).
\end{split}
\right.
\end{gather}
One can see from the expressions above and also from the  proof of Lemma \ref{lem:important} that there are two important families of points (before our scaling procedure), namely $x=0$ and $x=\frac{t^2}{4}\bar{\mu}(\mathbb{R})$, which correspond to 0 and $\bar{\mu}(\mathbb{R})$ after the scaling. Accordingly, we define the following two functions (of $t$) for later use:
\begin{definition}\label{def:alpha01}
	\begin{align}\label{eq:alpha01}
		\alpha_l(t):=\sup\{\alpha:~y(\alpha,t)<0\},\quad\mbox{and}\quad \alpha_r(t):=\inf\left\{\alpha:~y(\alpha,t)>\frac{t^2}{4}\bar{\mu}(\mathbb{R})\right\}.
	\end{align}
\end{definition}
\noindent
Since $y$ is continuous, the above definition actually implies that
\begin{equation}\label{eq:y(alpha_0,t)=0_and_y(alpha_1,t)=t^2barmu/4}
	\begin{aligned}
		y(\alpha_l(t),t)=0,\quad\mbox{and}\quad y(\alpha_r(t),t)=\frac{t^2}{4}\bar{\mu}(\mathbb{R}).
	\end{aligned}
\end{equation}
Hence, it follows from the explicit formula \eqref{eq:xbarbeta} for $y(\alpha,t)$ that
\begin{align}\label{eq:pro1}
	\bar{x}(\alpha_l(t))+\bar{u}(\bar{x}(\alpha_l(t)))t+\frac{t^2}{4}[\alpha_l(t)-\bar{x}(\alpha_l(t))]=0,
\end{align}
and
\begin{align}\label{eq:pro2}
	\bar{x}(\alpha_r(t))+\bar{u}(\bar{x}(\alpha_r(t)))t+\frac{t^2}{4}[\alpha_r(t)-\bar{x}(\alpha_r(t))]=\frac{t^2}{4}\bar{\mu}(\mathbb{R}).
\end{align}
We first prove two important facts about $\alpha_l$ and $\alpha_r$:
\begin{lemma}\label{lmm:importanddifference} 
For $\alpha_l(t)$ and $\alpha_r(t)$ defined by Definition \ref{def:alpha01}, we have
\begin{equation}\label{eq:Limitpminftyalpha_o-xbaralpha_0}
\lim_{t\to\pm\infty}[\alpha_l(t)-\bar{x}(\alpha_l(t))]=0,
\end{equation}
and 
\begin{align}\label{eq:differencelimitalpha1}
\lim_{t\to\pm\infty} \left[\alpha_r(t)-\bar{x}(\alpha_r(t))-\bar{\mu}(\mathbb{R})\right]=0.
\end{align}
\end{lemma}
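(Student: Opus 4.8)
The plan is to mimic the endpoint analysis ($x=0$ and $x=\bar{\mu}(\mathbb{R})$) carried out in Case 4 of the proof of Lemma~\ref{lem:important}. First dispose of the trivial case $\bar{\mu}(\mathbb{R})=0$: then $\bar{\mu}$ is the zero measure, so \eqref{eq:barx1} gives $\bar{x}(\alpha)\equiv\alpha$ and both \eqref{eq:Limitpminftyalpha_o-xbaralpha_0} and \eqref{eq:differencelimitalpha1} are immediate. So assume $\bar{\mu}(\mathbb{R})>0$ from now on. (Note also that $\alpha_l(t),\alpha_r(t)$ are genuine real numbers, since $y(\alpha,t)\to\pm\infty$ as $\alpha\to\pm\infty$ for $t\neq 0$, which follows from $\bar{x}(\alpha)\to\pm\infty$ together with $0\le\alpha-\bar{x}(\alpha)\le\bar{\mu}(\mathbb{R})$ and the boundedness of $\bar{u}$.) The crucial observation is that, by \eqref{eq:y(alpha_0,t)=0_and_y(alpha_1,t)=t^2barmu/4}, the identities \eqref{eq:pro1} and \eqref{eq:pro2} — after dividing by $t^2/4$ — are precisely the relation \eqref{eq:charater relation} specialized to $x=0$ and $x=\bar{\mu}(\mathbb{R})$, with $\alpha(x,t)$ replaced by $\alpha_l(t)$ and $\alpha_r(t)$ respectively; moreover $\alpha_l(t),\alpha_r(t)$ obey the two-sided bound \eqref{eq:diffbetalphaandbarx}, which is valid for every real argument. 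Throughout, "$t\to\pm\infty$" means an arbitrary sequence $t_n$ with $|t_n|\to\infty$; the sign of $t_n$ is irrelevant, since only $|t_n|\to\infty$ and the uniform bound $|\bar{u}|\le\|\bar{u}\|_{L^\infty}$ enter the estimates.

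For \eqref{eq:Limitpminftyalpha_o-xbaralpha_0} I would first show $\limsup_{t\to\pm\infty}\alpha_l(t)<+\infty$: if some sequence had $\alpha_l(t_n)\to+\infty$, then \eqref{eq:diffbetalphaandbarx} forces $\bar{x}(\alpha_l(t_n))\to+\infty$ and, by squeezing, $\alpha_l(t_n)-\bar{x}(\alpha_l(t_n))\to\bar{\mu}(\mathbb{R})$; dividing \eqref{eq:pro1} by $t_n^2/4$, letting $n\to\infty$, and using $\bar{x}(\alpha_l(t_n))>0$ eventually together with the boundedness of $\bar{u}$, yields $\bar{\mu}(\mathbb{R})\le0$, a contradiction. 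Hence $\bar{x}(\alpha_l(t))$ is bounded above too, by \eqref{eq:diffbetalphaandbarx}. Now suppose \eqref{eq:Limitpminftyalpha_o-xbaralpha_0} fails; since $\alpha-\bar{x}(\alpha)\ge0$ always, there is $t_n$ with $\alpha_l(t_n)-\bar{x}(\alpha_l(t_n))\to k>0$. If $\liminf_n\bar{x}(\alpha_l(t_n))=-\infty$, then along a subsequence $\bar{x}(\alpha_l(t_n))\to-\infty$, so $\bar{\mu}((-\infty,\bar{x}(\alpha_l(t_n))])\to0$ and \eqref{eq:diffbetalphaandbarx} gives $\alpha_l(t_n)-\bar{x}(\alpha_l(t_n))\to0$, contradicting $k>0$. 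Otherwise $\bar{x}(\alpha_l(t_n))$ is bounded, and dividing \eqref{eq:pro1} by $t_n^2/4$ gives $\alpha_l(t_n)-\bar{x}(\alpha_l(t_n))=-\tfrac{4}{t_n^2}\bar{x}(\alpha_l(t_n))-\tfrac{4}{t_n}\bar{u}(\bar{x}(\alpha_l(t_n)))\to0$, again contradicting $k>0$.

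For \eqref{eq:differencelimitalpha1} I would rewrite \eqref{eq:pro2} as
\[
\bar{x}(\alpha_r(t))+\bar{u}(\bar{x}(\alpha_r(t)))\,t+\frac{t^2}{4}\bigl[\alpha_r(t)-\bar{x}(\alpha_r(t))-\bar{\mu}(\mathbb{R})\bigr]=0,
\]
note that $\alpha_r(t)-\bar{x}(\alpha_r(t))-\bar{\mu}(\mathbb{R})\le0$ by \eqref{eq:diffbetalphaandbarx}, and run the mirror-image argument: first rule out $\alpha_r(t_n)\to-\infty$ (which would force $\bar{x}(\alpha_r(t_n))\to-\infty$, hence $\alpha_r(t_n)-\bar{x}(\alpha_r(t_n))\to0$, hence the bracket $\to-\bar{\mu}(\mathbb{R})$, and the displayed identity would then force $\bar{x}(\alpha_r(t_n))\to+\infty$, a contradiction), so $\alpha_r(t)$ and $\bar{x}(\alpha_r(t))$ are bounded below; then suppose the bracket tends to some $-k<0$ along $t_n$ and split on whether $\limsup_n\bar{x}(\alpha_r(t_n))=+\infty$ (in which case $\alpha_r(t_n)-\bar{x}(\alpha_r(t_n))\to\bar{\mu}(\mathbb{R})$ by \eqref{eq:diffbetalphaandbarx}, so the bracket $\to0$, a contradiction) or $\bar{x}(\alpha_r(t_n))$ is bounded (in which case dividing by $t_n^2/4$ sends the bracket to $0$, again a contradiction). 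This gives \eqref{eq:differencelimitalpha1}.

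The only genuinely non-routine step is the boundedness claim at the start of each part — ruling out $\alpha_l(t_n)\to+\infty$ and $\alpha_r(t_n)\to-\infty$. The mechanism is a self-consistency loop: a divergence of $\alpha_l$ (resp.\ $\alpha_r$) forces, via \eqref{eq:diffbetalphaandbarx}, the limiting value of $\alpha-\bar{x}(\alpha)$ to be $\bar{\mu}(\mathbb{R})$ (resp.\ $0$), which through \eqref{eq:pro1} (resp.\ \eqref{eq:pro2}) forces $\bar{x}(\alpha_l(t_n))\to-\infty$ (resp.\ $\bar{x}(\alpha_r(t_n))\to+\infty$) — the exact opposite direction — contradicting that $\alpha-\bar{x}(\alpha)$ is uniformly bounded by $\bar{\mu}(\mathbb{R})$. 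Everything after that is a verbatim adaptation of Case 4 of Lemma~\ref{lem:important}.
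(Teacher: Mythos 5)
Your proposal is correct and uses essentially the same mechanism as the paper: the contradiction argument built on \eqref{eq:pro1}, \eqref{eq:pro2} and the two-sided bound \eqref{eq:diffbetalphaandbarx}, exactly as in Case 4 of Lemma~\ref{lem:important}. The only cosmetic difference is that the paper dispatches \eqref{eq:Limitpminftyalpha_o-xbaralpha_0} instantly by noting $\alpha_l(t)=\alpha(0,t)$ and citing \eqref{eq:differencealphaxt} at $x=0$, whereas you rerun the endpoint argument directly for $\alpha_l$; both are fine.
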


\begin{proof}
Using Definition~\ref{def:pseudo} and Definition~\ref{def:alpha01}, one can actually show that
\begin{align}\label{eq:relation}
\alpha(0,t)=\alpha_l(t).
\end{align}
Therefore, the limit \eqref{eq:Limitpminftyalpha_o-xbaralpha_0} follows immediately from Lemma~\ref{lem:important}, namely \eqref{eq:differencealphaxt} at $x=0$.
	
Next, we prove \eqref{eq:differencelimitalpha1}. Although the proof is similar to Case 4 of the proof of Lemma~\ref{lem:important}, we will provide the details here for completeness.
It follows from \eqref{eq:pro2} that
\begin{align}\label{eq:alpha1t}
\alpha_r(t)-\bar{x}(\alpha_r(t))-\bar{\mu}(\mathbb{R})=-\frac{4}{t^2}\bar{x}(\alpha_r(t))-\frac{4}{t}\bar{u}(\bar{x}(\alpha_r(t))).
\end{align}
First of all, we claim that
\begin{claim}\label{claim:Limitinfxbaralpha_1>-infty}
	\[
		\liminf_{t\to\pm\infty}\bar{x}(\alpha_r(t)) > -\infty.
	\]	
\end{claim}
\begin{proof}
	Seeking for a contradiction, we first assume that there exists a sequence $t_n\to \pm \infty$ such that $\bar{x}(\alpha_r(t_n))\to -\infty$. 
	Then by \eqref{eq:alpha1t} and the boundedness of $\bar{u}$, we have
	\[
	\liminf_{n\to +\infty}[\alpha_r(t_n)-\bar{x}(\alpha_r(t_n))-\bar{\mu}(\mathbb{R})]\geq 0.
	\]
	On the other hand, it follows from \eqref{eq:barx1} that
	\begin{equation}\label{eq:differencealpha} 
		\bar{\mu}((-\infty, \bar{x}(\alpha_r(t_n))))\leq \alpha_r(t_n) - \bar{x}(\alpha_r(t_n)) \leq \bar{\mu}((-\infty, \bar{x}(\alpha_r(t_n))]),
	\end{equation}
	and hence 
	\[
	\lim_{n\to +\infty}[\alpha_r(t_n)-\bar{x}(\alpha_r(t_n))-\bar{\mu}(\mathbb{R})]=-\bar{\mu}(\mathbb{R}) <0,
	\]
	which is a contradiction.
\end{proof} 
Now, we are going to prove \eqref{eq:differencelimitalpha1} by using Claim~\ref{claim:Limitinfxbaralpha_1>-infty}. Seeking for a contradiction, we assume that \eqref{eq:differencelimitalpha1} does not hold. It follows from \eqref{eq:barx1} that $\alpha_r(t)-\bar{x}(\alpha_r(t))-\bar{\mu}(\mathbb{R})\leq0$ for all $t\in\mathbb{R}$, so there exists a negative number $m$ and a sequence $t_n\to+\infty$ such that
\[
\lim_{n\to+\infty}[\alpha_r(t_n)-\bar{x}(\alpha_r(t_n))-\bar{\mu}(\mathbb{R})]=m<0.
\]
One can also apply \eqref{eq:barx1} to show that $\limsup_{n\to+\infty}\bar{x}(\alpha_r(t_n))<+\infty$.
Together with Claim~\ref{claim:Limitinfxbaralpha_1>-infty}, there exists a constant $M>0$ such that $|\bar{x}(\alpha_r(t_n))|<M$ for any $n$. Evaluating \eqref{eq:alpha1t} at $t=t_n$, and then passing to the limit as $n\to +\infty$, we have, due to the boundedness of $\bar{x}(\alpha_r(t_n))$ and $\bar{u}$,
\[
0>m=\lim_{n\to+\infty}[\alpha_r(t_n)-\bar{x}(\alpha_r(t_n))-\bar{\mu}(\mathbb{R})]=\lim_{n\to+\infty}\left(-\frac{4}{t_n^2}\bar{x}(\alpha_r(t_n))-\frac{4}{t_n}\bar{u}(\bar{x}(\alpha_r(t_n)))\right)=0,
\]
which is a contradiction. Therefore, \eqref{eq:differencelimitalpha1} actually holds.
%
%
%\tcr{The following previous proof seems to be not sufficient.}
%\tcg{Now, using the claim that $\liminf_{t\to\pm\infty}\bar{x}(\alpha_r(t)) > -\infty$, we can assert that for any sequence $t_n\to \pm\infty$, either
%\[
%\bar{x}(\alpha_r(t_n))\to +\infty , \quad\textrm{ or }\quad |\bar{x}(\alpha_r(t_n))| \leq M~\textrm{for a constant $M$}.
%\]
%If $\alpha_r(t_n)\to +\infty$, then passing to the limit in \eqref{eq:differencealpha} yields
%\begin{equation}\label{eq:Limitalpha_1-xbaralpha_1}
%\lim_{n \to +\infty}[\alpha_r(t_n)-\bar{x}(\alpha_r(t_n))-\bar{\mu}(\mathbb{R})]=0.
%\end{equation} 
%On the other hand, if $|\bar{x}(\alpha_r(t_n))| \leq M$ for all $n\in\mathbb{N}$, then passing to the limit in \eqref{eq:alpha1t} yields \eqref{eq:Limitalpha_1-xbaralpha_1}. Since \eqref{eq:Limitalpha_1-xbaralpha_1} holds for any arbitrary sequence $t_n\to\pm\infty$, we actually have \eqref{eq:differencelimitalpha1}.}
\end{proof}
\begin{remark}

It follows from Definition~\ref{def:alpha01} and the continuity of $y$ that 
\[
\alpha_r(t)=\sup\left\{\alpha\in\mathbb{R}:~~\bar{\mu}(\mathbb{R})=\frac{4}{t^2}y(\alpha,t)\right\}.
\]
Hence, the limit \eqref{eq:differencelimitalpha1} in Lemma \ref{lmm:importanddifference} just means that the same conclusion in Lemma \ref{lem:important} (i.e., the limit \eqref{eq:differencealphaxt} at $x=\bar{\mu}(\mathbb{R})$) also holds for $\alpha_r(t)$, which is defined by using ``$\sup$''. In contrast, $\alpha\left(\bar{\mu}(\mathbb{R}),t\right)$ is defined by using ``$\inf$'' in Definition \ref{def:pseudo}.
%It is also worth noting that similar conclusions also hold for using other variants of pseudo inverse function $\alpha$
Indeed, this holds true for any $x$: that is Lemma~\ref{lem:important} still holds if we change the definition of $\alpha(x,t)$ in Definition \ref{def:pseudo} to $\alpha(x,t):=\sup \{\alpha\in\mathbb{R}:\;x=$ $\frac{4}{t^2}y(\alpha,t)\}$. As one can see from the proof of Lemma~\ref{lem:important} that all the arguments are based on \eqref{eq:charater relation} and \eqref{eq:diffbetalphaandbarx} (or \eqref{eq:barx1}),  and do not depend on the choice of $``\inf"$ or $``\sup"$ in the definition of $\alpha(x,t)$. In a certain sense, using different variants of definition of pseudo inverse function $\alpha$ does not really affect some limiting results, such as Lemma~\ref{lem:important}.
\end{remark}
Now we are ready to prove Theorem \ref{thm:mainthm1}.
\begin{proof}[Proof Theorem \ref{thm:mainthm1}]
First of all, the limit \eqref{eq:v} follows directly from \eqref{eq:limit} and \eqref{eq:differencealphaxt} in Lemma  \ref{lem:important}. The proofs of \eqref{eq:Linfty}-\eqref{eq:singular part} will be given as follows.
	
\noindent\textbf{Proof of \eqref{eq:Linfty}:}
For any fixed $t\in\mathbb{R}$, the mapping $\alpha\mapsto y(\alpha,t)$ is a surjective mapping from $\mathbb{R}$ to $\mathbb{R}$, so in order to show \eqref{eq:Linfty}, it suffices to prove that as $t\to\pm\infty$, the right hand side of \eqref{eq:difference} is (uniformly in $\alpha$) of order $o(t)$.
%Our aim is to prove that the right hand side of \eqref{eq:difference} is of order $o(t)$. 
We consider the following three cases separately.

\textbf{Case 1.} Consider $y(\alpha,t)<0$. We want to show that as $t\to\pm\infty$,
\[
\bar{u}(\bar{x}(\alpha))+\frac{t}{2}(\alpha-\bar{x}(\alpha))=o(t).
\]
Since $\bar{u}$ is bounded, it suffices to show that 
\[
\sup\left\{|\alpha-\bar{x}(\alpha)|;\;y(\alpha,t)<0\right\}\to 0 ~\textrm{ as }~t\to\pm\infty.
\]
Consider $\alpha_l$ defined by \eqref{eq:alpha01}. Due to \eqref{eq:barx1}, we know that $\alpha-\bar{x}(\alpha)$ is a  continuous, non-negative and non-decreasing function of $\alpha$, so for all $\alpha$ satisfying $y(\alpha,t)<0$, we have
\[
0\leq \alpha-\bar{x}(\alpha)\leq \alpha_l(t)-\bar{x}(\alpha_l(t)).
\]
In other words,
\begin{equation}\label{eq:estiatleft}
\left\|u(x,t)- \frac{t}{2}v\left(\frac{4x}{t^2}\right)\right\|_{L^{\infty}((-\infty,0))}\leq
\frac{t}{2}	\left(\alpha_l(t)-\bar{x}(\alpha_l(t)) \right) +\|\bar{u}\|_{L^{\infty}(\mathbb{R})}.
\end{equation}
Hence, by \eqref{eq:Limitpminftyalpha_o-xbaralpha_0}, this completes the proof of Case 1.

\textbf{Case 2.} Consider $y(\alpha,t)>\frac{t^2}{4}\bar{\mu}(\mathbb{R})$.
We want to show that as $t\to\pm\infty$,
\[
\bar{u}(\bar{x}(\alpha))+\frac{t}{2}[\alpha-\bar{x}(\alpha)]-\frac{t}{2}\bar{\mu}(\mathbb{R})=o(t),
\]
which is equivalent to show that
\[
\sup\left\{|\alpha-\bar{x}(\alpha)-\bar{\mu}(\mathbb{R})|;\;y(\alpha,t)>\frac{t^2}{4}\bar{\mu}(\mathbb{R})\right\} \to 0 ~\textrm{ as }~t\to\pm\infty.
\] 
Consider the function $\alpha_r$ defined by \eqref{eq:alpha01}.
Due to \eqref{eq:barx1} again, we have, for any $\alpha$ satisfying $y(\alpha,t)>\frac{t^2}{4}\bar{\mu}(\mathbb{R})$,
\[
0\geq \alpha-\bar{x}(\alpha)-\bar{\mu}(\mathbb{R}) \geq \alpha_r(t)-\bar{x}(\alpha_r(t))-\bar{\mu}(\mathbb{R}),
\]
where the last inequality follows from the monotonicity of $\alpha-\bar{x}(\alpha)$.
In other words,
\begin{equation}\label{eq:estiatright}
	\left\|u(x,t)- \frac{t}{2}v\left(\frac{4x}{t^2}\right)\right\|_{L^{\infty}((\frac{t^2}{4}\bar{\mu}(\mathbb{R}),+\infty))}\leq
	\frac{t}{2}	\left[\bar{\mu}(\mathbb{R})-(\alpha_r(t)-\bar{x}(\alpha_r(t)) )\right] +\|\bar{u}\|_{L^{\infty}(\mathbb{R})}.
\end{equation}
Hence, by \eqref{eq:differencelimitalpha1}, this completes the proof of Case 2. 

\textbf{Case 3.} Consider $0\leq y(\alpha,t)\leq \frac{t^2}{4}\bar{\mu}(\mathbb{R})$. In this case, we want to show that as $t\to\pm\infty$,
\[
-\frac{2}{t}\bar{x}(\alpha)-\bar{u}(\bar{x}(\alpha)) = o(t),
\]
uniformly in $\alpha$.
Since $\bar{u}$ is bounded, it suffices to show that 
\[
\sup\left\{|\bar{x}(\alpha)|;\;0\leq y(\alpha,t)\leq \frac{t^2}{4}\bar{\mu}(\mathbb{R})\right\} = o(t^2) ~\textrm{ as }~t\to\pm\infty.
\]
Since $\bar{x}$ is a non-decreasing function of $\alpha$, it suffices to show $\max\left\{ |\bar{x}(\alpha_l(t))|, |\bar{x}(\alpha_r(t))| \right\} = o(t^2)$ as $t\to\pm\infty$. In other words, it suffices to show $\lim_{t \to \pm\infty}\frac{\bar{x}(\alpha_l(t))}{t^2} =0$ and $\lim_{t \to \pm\infty}\frac{\bar{x}(\alpha_r(t))}{t^2} =0$. 
Indeed, it follows immediately from \eqref{eq:pro1} and \eqref{eq:Limitpminftyalpha_o-xbaralpha_0} that $\lim_{t \to \pm\infty}\frac{\bar{x}(\alpha_l(t))}{t^2} =0$, since $\bar{u}$ is bounded. Similarly, it follows from \eqref{eq:differencelimitalpha1}, \eqref{eq:alpha1t} and the boundedness of $\bar{u}$ that $\lim_{t \to \pm\infty}\frac{\bar{x}(\alpha_r(t))}{t^2} =0$ as well. This shows Case 3, and hence, completes the proof of \eqref{eq:Linfty}.

\noindent\textbf{Proof of \eqref{eq:L2}:} According to \eqref{eq:diff}, we have
\begin{multline}\label{eq:I123}
\left\|u_x(x,t)-\partial_x\left[\frac{t}{2} v\left(\frac{4x}{t^2}\right)\right]\right\|_{L^2}^2=\int_{(-\infty,0)}u_x^2(x,t)\di x+\int_{(0,\frac{t^2}{4}\bar{\mu}(\mathbb{R}))}\left[u_x(x,t)-\frac{2}{t}\right]^2\di x\\
+\int_{(\frac{t^2}{4}\bar{\mu}(\mathbb{R}),+\infty)}u_x^2(x,t)\di x=:I_1+I_2+I_3.
\end{multline}
We will apply the change of variables to estimate $I_i$ for $i=1$, $2$, $3$.
Following \cite{gao2021regularity}, we define $B_t^L$ as follows:
\[
B_t^L=\{\alpha:~y_\alpha(\alpha,t)>0\}.
\] 
The following results hold (see \cite[Eq. (2.43)]{gao2021regularity} for instance):
\[
u_x^2(y(\alpha,t),t)y_\alpha(\alpha,t)=f(\alpha) := 1-\bar{x}'(\alpha),\quad \alpha\in B_t^L,
\]
and
\[
	\mathcal{L}(\mathbb{R}\setminus y(B_t^L,t))=0, 
\]
where $\mathcal{L}$ is the Lebesgue measure. Combining the above two facts yields 
\begin{equation}\label{eq:termI1}
\begin{aligned}
I_1=&\left\|u_x(x,t)-\partial_x\left[\frac{t}{2} v\left(\frac{4x}{t^2}\right)\right]\right\|^2_{L^2(( -\infty,0))} =
\int_{(-\infty,0)}u_x^2(x,t)\di x=\int_{(-\infty,0)\cap y(B_t^L,t)}u_x^2(x,t)\di x\\
=&\int_{(-\infty,\alpha_l(t))\cap B_t^L}u_x^2(y(\alpha,t),t)y_\alpha(\alpha,t)\di \alpha
\leq \int_{(-\infty,\alpha_l(t))}f(\alpha)\di \alpha=\alpha_l(t)-\bar{x}(\alpha_l(t))\to 0,
\end{aligned}
\end{equation}
as $t\to\pm\infty$, because of \eqref{eq:Limitpminftyalpha_o-xbaralpha_0}. 
%\tcb{because of \eqref{eq:Limitpminftyalpha_o-xbaralpha_0}.} {\color{red} (To Hao and Yu: Here, I changed the reason to be \eqref{eq:Limitpminftyalpha_o-xbaralpha_0} since the original one was \eqref{eq:limitalpha0}, which is in the green part that I planned to delete. I think that we will also have the SAME issue (namely \eqref{eq:differencelimitalpha1}) below for supporting the limit in $I_3$.)}

For $I_3$, we similarly obtain
\begin{equation}\label{eq:termI3}
\begin{aligned}
I_3=&\left\|u_x(x,t)-\partial_x\left[\frac{t}{2} v\left(\frac{4x}{t^2}\right)\right]\right\|^2_{L^2(( \frac{t^2}{4}\bar{\mu}(\mathbb{R}), +\infty))} \\
=&\int_{(\frac{t^2}{4}\bar{\mu}(\mathbb{R}),+\infty)}u_x^2(x,t)\di x=\int_{(\alpha_r(t),+\infty)\cap B_t^L}u_x^2(y(\alpha,t),t)y_\alpha(\alpha,t)\di \alpha\\
\leq& \int_{(\alpha_r(t),+\infty)}f(\alpha)\di \alpha=\bar{\mu}(\mathbb{R})-[\alpha_r(t)-\bar{x}(\alpha_r(t))]\to 0,
\end{aligned}
\end{equation}
as $t\to\pm\infty$, because of \eqref{eq:differencelimitalpha1}.

For $I_2$, we have 
\begin{equation}\label{eq: term I2}
\begin{aligned}
I_2&=\int_{(0,\frac{t^2}{4}\bar{\mu}(\mathbb{R}))}\left[u_x(x,t)-\frac{2}{t}\right]^2\di x\\
&= \int_{(\alpha_l(t),\alpha_r(t))\cap B_t^L}u_x^2(y(\alpha,t),t)y_\alpha(\alpha,t)\di \alpha + 
\int_{(\alpha_l(t),\alpha_r(t))} \left[ \frac{4}{t^2}y_\alpha(\alpha,t)-\frac{4}{t}u_x(y(\alpha,t),t)y_\alpha(\alpha,t) \right]\di \alpha
 \\
&\leq \int_{(\alpha_l(t),\alpha_r(t))}f(\alpha)\di \alpha+\frac{4}{t^2}[y(\alpha_r(t),t)-y(\alpha_l(t),t)]-\frac{4}{t}[u(y(\alpha_r(t),t),t)-u(y(\alpha_l(t),t),t)]\\
%&=\alpha_r(t)-\bar{x}(\alpha_r(t))-[\alpha_l(t)-\bar{x}(\alpha_l(t))-\bar{\mu}(\mathbb{R})]-\frac{4}{t}u\left(\frac{t^2}{4}\bar{\mu}(\mathbb{R}),t\right)+\frac{4}{t}u(0,t)\\
&=[\alpha_r(t)-\bar{x}(\alpha_r(t))- \bar{\mu}(\mathbb{R})]  -  [\alpha_l(t)-\bar{x}(\alpha_l(t))] 
+ 2\left[ \bar{\mu}(\mathbb{R})-\frac{2}{t}u\left(\frac{t^2}{4}\bar{\mu}(\mathbb{R}),t\right)\right]   +  \frac{4}{t}u(0,t).\\
\end{aligned}
\end{equation}
Combining \eqref{eq:v}, \eqref{eq:Limitpminftyalpha_o-xbaralpha_0} and \eqref{eq:differencelimitalpha1}, we finally have $I_2\to0$ as $t\to\pm\infty.$
This completes the proof of \eqref{eq:L2}. 

\noindent\textbf{Proof of \eqref{eq:singular part}:}
It follows directly from \eqref{eq:L2} that  $\lim_{t\to\pm\infty} \|u_x(\cdot, t)\|_{L^2}^2 = \left\|\partial_x\left[\frac{t}{2} v\left(\frac{4x}{t^2}\right)\right]\right\|_{L^2}^2 = \bar{\mu}(\mathbb{R})$. On the other hand, the energy conservation implies that  $\|u_x(\cdot, t)\|_{L^2}^2  + {\mu}_{s}(t)(\mathbb{R})=\mu(t)(\mathbb{R}) = \bar{\mu}(\mathbb{R})$ for all $t\in \mathbb{R}$, so passing to the limit as $t\to\pm\infty$, we finally obtain \eqref{eq:singular part}. 
This completes the whole proof of Theorem~\ref{thm:mainthm1}.
\end{proof}
\begin{remark}\label{rem:limit for fixed x}
Using the limit \eqref{eq:Linfty} and the continuity of $v$, we have the following pointwise estimates: for any fixed $x\in\mathbb{R}$,
\begin{align*}
\lim_{t\to\pm\infty} \frac{u(x,t)}{t}= \lim_{t\to\pm\infty}  \frac{1}{2}v\left(\frac{4x}{t^2}\right)= \frac{1}{2}v(0)=0.
\end{align*}
Hence, we know that for each fixed $x\in\mathbb{R}$, the growth rate of  $u(x,t)$ is { at most} of order $o(t)$. However, the { precise} growth rate may be very close to $O(t)$; see Theorem \ref{thm:mainthm3} for more details.
\end{remark}

\section{Optimal error estimates and large time pointwise behavior}\label{sec:decayrate}
The results in Theorem \ref{thm:mainthm1} only show the asymptotic expansions without any decay rate estimates for the error terms. In this section, we are going to show the optimal error estimates for $\left\|u(\cdot,t)-\frac{t}{2} v\left(\frac{4x}{t^2}\right)\right\|_{L^\infty(\mathbb{R})}$ and $\left\|u_x(\cdot,t)-\partial_x\left[\frac{t}{2} v\left(\frac{4x}{t^2}\right)\right]\right\|_{L^2(\mathbb{R})}$ and large time pointwise behavior under natural assumptions on the tails of initial energy measure $\bar{\mu}$, namely proving Theorem \ref{thm:mainthm2} and \ref{thm:mainthm3}. 

Regarding these error estimates, the decay rates of the tails of initial energy measure $\bar{\mu}$ play an essential role, and will heavily affect the growth rates of $\alpha_l(t)$ and $\alpha_r(t)$, which are defined by \eqref{eq:alpha01}. Since the growth rates of $\bar{x}(\alpha_l(t))$ and $\bar{x}(\alpha_r(t))$ are essentially the same as that of $\alpha_l(t)$ and $\alpha_r(t)$ respectively, it follows from \eqref{eq:pro1} and \eqref{eq:alpha1t} that the decay rate of $\alpha_l(t) - \bar{x}(\alpha_l(t))$ and  $\alpha_r(t) - \bar{x}(\alpha_r(t))-\bar{\mu}(\mathbb{R})$ will be indirectly determined by the tails of the initial energy measure $\bar{\mu}$; see \eqref{eq:idea} below for a heuristic.
Furthermore, one can see from the proof of Theorem \ref{thm:mainthm1} that the two quantities $\alpha_l(t) - \bar{x}(\alpha_l(t))$ and  $\alpha_r(t) - \bar{x}(\alpha_r(t))-\bar{\mu}(\mathbb{R})$ essentially determine the size of $\left\|u(x,t)-\frac{t}{2} v\left(\frac{4x}{t^2}\right)\right\|_{L^\infty(\mathbb{R})}$ and $\left\|u_x(x,t)-\partial_x\left[\frac{t}{2} v\left(\frac{4x}{t^2}\right)\right]\right\|_{L^2(\mathbb{R})}$. For example, if we have the estimate $\alpha_l(t) = O(t^{1+\theta})$ for some $0\leq \theta<1$, then $\bar{x}(\alpha_l(t))=O(t^{1+\theta})$, and hence, it follows from \eqref{eq:pro1} that  
$\alpha_l(t) - \bar{x}(\alpha_l(t))= O(t^{\theta-1})$. In general, $\alpha_l(t)$ or $\alpha_r(t)$ cannot perform better than $O(t)$, if no further condition is assumed on the initial data; as we can see from Example \ref{exm:k} that for the initial data $(\bar{u},\bar{\mu})=(1,\delta_0)$, we actually have $\alpha_l(t)=-t$. Now we will turn our attention to obtaining the growth rate control for $\alpha_l(t)$ and $\alpha_r(t)$.

For the growth rate of $\alpha_l(t)$, the  limit
\[
A:=\lim_{x\to -\infty}|x|{\bar{\mu}((-\infty, -|x|))}
\]
is a crucial quantity, provided that this limit exists { or equals to $+\infty$}. As we will see, when $A=+\infty$, the effect of the left tail of the energy measure $\bar{\mu}$ will defeat the effect of $\bar{u}(-\infty)$; on the other hand, when $A=0$, the effect of the value $\bar{u}(-\infty)$ is more dominant. The case $A>0$ can be regarded as the critical case, in which the effects of the left tail of the energy measure $\bar{\mu}$ and $\bar{u}(-\infty)$ are { comparable}.  

To illustrate the idea, let us consider the scenario that $\alpha_l(t)\to - \infty$ as $t \to +\infty$. Heuristically, it follows from \eqref{eq:barx1} and \eqref{eq:pro1} that for any sufficiently large $t$,
\begin{equation}\label{eq:idea}
\begin{aligned}
	\bar{x}(\alpha_l(t))\bar{\mu}((-\infty, \bar{x}(\alpha_l(t)))) &\approx  \bar{x}(\alpha_l(t))\left[\alpha_l(t)-\bar{x}(\alpha_l(t))\right]\\
	&=-4\left(\frac{\bar{x}(\alpha_l(t))}{t}\right)^2-4\left(\frac{\bar{x}(\alpha_l(t))}{t}\right)\bar{u}(\bar{x}(\alpha_l(t))).
\end{aligned}
\end{equation}
As $t \to +\infty$, the left hand side will tend to the numerical value $A$, and hence, the value of $A$ indicates the growth rate of $\alpha_l(t)$. More precisely, $A=+\infty$ (or $0$) means that  $|\bar{x}(\alpha_l(t))|$ grows faster (or no faster) than $t$; on the other hand, a finite value of $A$ means that $\bar{x}(\alpha_l(t))=O(t)$ and $\bar{u}(-\infty)$ becomes significant. Similar analysis with more subtleties will lead to the explicit growth rate of $\alpha_l(t)$, provided that more information (e.g., \eqref{eq:leftcondition1} or \eqref{eq:leftcondition2}) about the tails is known; see Lemma~\ref{lemma:properties} and Remark~\ref{rem:sublinear} for further details, in which we will prove the growth rates for general $\alpha_x(t)$ to be defined in \eqref{eq:alphaxt}, and indeed $\alpha_l(t)$ is a special case of $\alpha_x(t)$, namely $\alpha_l(t)=\alpha_x(t)$ when $x=0$.

In the propositions below, we will further impose conditions on $\lim_{x\to -\infty}|x|^{1-\theta}{\bar{\mu}((-\infty, -|x|^{1+\theta}))} $ for some $0 \leq \theta <1$, as we will see that it is more favourable for our presentations.
One can of course  impose conditions on $\lim_{x\to -\infty}$$|x|^{\beta_1}$ ${\bar{\mu}((-\infty, -|x|^{\beta_2}))}$; for example, if the limit
\[
B:=\lim_{x\to -\infty}|x|^{\beta_1}{\bar{\mu}((-\infty, -|x|^{\beta_2}))} \in [0,\infty)
\] 
for some positive constants $\beta_1$ and $\beta_2$,  then by a change of variables, one can verify that 
\begin{equation*}
\lim_{x\to -\infty}|x|^{1-\theta}{\bar{\mu}((-\infty, -|x|^{1+\theta}))} =\lim_{y\to -\infty}|y|^{\beta_1}{\bar{\mu}((-\infty, -|y|^{\beta_2}))}  = B\geq 0
\end{equation*}
where $\theta=\frac{\beta_2-\beta_1}{\beta_1+\beta_2}$, provided that $\beta_2\geq \beta_1$. On the other hand, 
if $\beta_1 >\beta_2$, then $$\lim_{x\to -\infty}|x|{\bar{\mu}((-\infty, -|x|))} = \lim_{x\to -\infty}|x|^{1-\frac{\beta_1}{\beta_2}} \cdot\lim_{x\to -\infty}|x|^{\frac{\beta_1}{\beta_2}}{\bar{\mu}((-\infty, -|x|))} = 0.$$
Hence, for $\beta_1>\beta_2$, the condition on $\lim_{x\to -\infty}|x|^{\beta_1}{\bar{\mu}((-\infty, -|x|^{\beta_2}))} $ can also be seen as a sort of  condition on $\lim_{x\to -\infty}|x|{\bar{\mu}((-\infty, -|x|))} $, which corresponds to $\theta=0$. Finally, we remark that all the discussions above apply equally well to $\alpha_r(t)$ with the assumptions on $\lim_{x\to +\infty}x{\bar{\mu}((x,+\infty))}$; see Lemma~\ref{lemma:propertiesright} for details. 

%\textbf{\tcr{(Main changes: (a) Section 3.1 and 3.2 were combined together. Mainly, Theorem 3.9 and Remark 3.10 in previous version were putted into Lemma 3.1 and 3.3. (b) Since previously corollary 3.6 said by replacing $\alpha_l$ with $\alpha_x$, all the results in Lemma 3.1 hold. This version is direct proof for $\alpha_x$)}}

For any fixed $x$, we define 
\begin{align}\label{eq:alphaxt}
\alpha_x(t):=\alpha\left(\frac{4}{t^2}x,t\right)=\inf\left\{\alpha:~x=y(\alpha,t)\right\},
\end{align}
where the pseudo inverse function $\alpha$ is defined in \eqref{eq:defalpha}. Then it follows from \eqref{eq:alpha01}, \eqref{eq:alphaxt} and the continuity of $y$ that $\alpha_l(t)=\alpha_0(t)$ for all $t\neq 0$. 
In the following, we will see that for any fixed $x\in\mathbb{R}$, all the functions $\alpha_x(t)$ behaves like $\alpha_l(t)$ as $t\to +\infty$; heuristically, this is because $\frac{4}{t^2}x \to 0$ as $t\to +\infty$. 
Now, let us start with the following facts (i.e., Lemma~\ref{lemma:properties} and Lemma~\ref{lemma:propertiesright}) for $\alpha_x(t)$ and $\alpha_r(t)$. 
\begin{lemma}\label{lemma:properties}
Let $(u,\mu)$ be a conservative solution to the Hunter-Saxton equation subject to an initial data $(\bar{u},\bar{\mu})\in\mathcal{D}$. Let $0\leq \theta_1<1$ and $A_1\geq0$ be some constants. Then for any $x\in\mathbb{R}$, the function $\alpha_x(t)$ defined by \eqref{eq:alphaxt} satisfies the following statements:
\begin{enumerate}
\item[(i)] If $A_1>0$, $0<\theta_1<1$ and $\bar{\mu}$ satisfies \eqref{eq:leftcondition1}, then we have
\begin{align}\label{eq:leftlimit1}
-\left(\frac{A_1}{4}\right)^{\frac{1+\theta_1}{2}}\leq \liminf_{t\to+\infty}\frac{\bar{x}(\alpha_x(t))}{t^{1+\theta_1}}\leq \limsup_{t\to+\infty}\frac{\bar{x}(\alpha_x(t))}{t^{1+\theta_1}}\leq 0.
\end{align}
If $A_1>0$, $\theta_1=0$ and $\bar{\mu}$ satisfies \eqref{eq:leftcondition1}, then we have
\begin{align}\label{eq:leftlimit11}
-\infty<\liminf_{t\to+\infty}\frac{\bar{x}(\alpha_x(t))}{t}\leq \limsup_{t\to+\infty}\frac{\bar{x}(\alpha_x(t))}{t}\leq 0.
\end{align}

\item[(ii)] Assume that $\bar{u}$ satisfies $\lim_{x\to-\infty}\bar{u}(x)=\bar{u}(-\infty)\in\mathbb{R}$  and $\bar{\mu}$ satisfies  \eqref{eq:leftcondition2}.
Then 
\begin{enumerate}
\item[(a)] if $A_1>0$ and $0\leq \theta_1<1$, we have 
\begin{equation}\label{eq:leftlimit2}
\lim_{t\to+\infty}\frac{\bar{x}(\alpha_x(t))}{t^{1+\theta_1}} = \left\{
\begin{aligned}
&-\left(\frac{A_1}{4}\right)^{\frac{1+\theta_1}{2}}~\textrm{ for }~0<\theta_1<1;\\
&-\frac{1}{2}\bar{u}(-\infty) -\frac{1}{2}\sqrt{\bar{u}^2(-\infty) +{A_1}}~\textrm{ for }~\theta_1=0;
\end{aligned}
\right.
\end{equation}
\item[(b)] 
if $A_1=0$, $0<\theta_1<1$  and $\bar{\mu}$ satisfies $\bar{\mu}((-\infty,z))>0$ for any $z\in\mathbb{R}$, we have
\begin{align}\label{eq:leftlimit3}
\lim_{t\to+\infty}\frac{\bar{x}(\alpha_x(t))}{t^{1+\theta_1}}=0;
\end{align}
if $A_1=0$, $\theta_1=0$ and $\bar{\mu}$ satisfies $\bar{\mu}((-\infty,z))>0$ for any $z\in\mathbb{R}$, we have
\begin{equation}\label{eq:leftlimit4}
\lim_{t\to+\infty}\frac{\bar{x}(\alpha_x(t))}{t}=\left\{
\begin{aligned}
& -\bar{u}(-\infty)~~\textrm{ for }~~\bar{u}(-\infty)>0;\\
&0~\textrm{ for }~\bar{u}(-\infty)\leq0.
\end{aligned}\right.
\end{equation}
\end{enumerate}

\item[(iii)] If $\bar{\mu}$ satisfies \eqref{eq:leftcondition3} for some constant $\ell\in\mathbb{R}$, then we have
\begin{equation}\label{eq:compact10}
\lim_{t\to+\infty}\bar{x}(\alpha_x(t))=\left\{
\begin{aligned}
&-\infty~\textrm{ if }~\bar{u}(-\infty)>0,\\
&\ell~\textrm{ if }~\bar{u}(-\infty)<0,
\end{aligned}
\right.
\end{equation}
and the limit \eqref{eq:leftlimit4} also holds.

\end{enumerate}
Moreover, all of the above results will also hold, if we replace $\bar{x}(\alpha_x(t))$ by $\alpha_x(t)$.
\end{lemma}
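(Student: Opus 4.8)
The plan is to derive every assertion from one scalar relation. Writing $\xi(t):=\bar x(\alpha_x(t))$ and using $y(\alpha_x(t),t)=x$ together with \eqref{eq:xbarbeta} gives $\xi(t)+\bar u(\xi(t))\,t+\tfrac{t^2}{4}(\alpha_x(t)-\xi(t))=x$; combining this with \eqref{eq:barx1} at $\alpha=\alpha_x(t)$ (which sandwiches $\alpha_x(t)-\xi(t)$ between $\bar\mu((-\infty,\xi(t)))$ and $\bar\mu((-\infty,\xi(t)])$) yields the \emph{basic relation}
\[
\tfrac{t^2}{4}\,\bar\mu\big((-\infty,\xi(t))\big)\ \le\ x-\xi(t)-\bar u(\xi(t))\,t\ \le\ \tfrac{t^2}{4}\,\bar\mu\big((-\infty,\xi(t)]\big).
\]
Its middle term is nonnegative and at most $\tfrac{t^2}{4}\bar\mu(\mathbb{R})$, so a priori $x-\|\bar u\|_{L^\infty(\mathbb{R})}t-\tfrac{t^2}{4}\bar\mu(\mathbb{R})\le\xi(t)\le x+\|\bar u\|_{L^\infty(\mathbb{R})}t$ for $t>0$. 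Since $|\alpha_x(t)-\xi(t)|\le\bar\mu(\mathbb{R})$ by \eqref{eq:barx1}, the final ``moreover'' clause (replacing $\bar x(\alpha_x(t))$ by $\alpha_x(t)$) is immediate once the statements for $\xi(t)$ are proved, so I only track $\xi(t)$.

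Second, I would pin down where $\xi(t)$ goes as $t\to+\infty$. By contradiction arguments analogous to those in Cases~1 and 4 of the proof of Lemma~\ref{lem:important}: if $\xi(t)$ were bounded below along a sequence $t_n\to+\infty$, then $\bar\mu((-\infty,\xi(t_n)))$ would be bounded below by a positive constant -- this is precisely where $A_1>0$ in \eqref{eq:leftcondition1}, or the hypothesis $\bar\mu((-\infty,z))>0$ for all $z$, enters -- forcing the left side of the basic relation to be of order $t_n^2$ while the middle side is $O(t_n)$, a contradiction; combined with the a priori upper bound this gives $\xi(t)\to-\infty$ in parts (i) and (ii), and in (iii) when $\bar u(-\infty)>0$ (where one instead locates the first crossing near $\alpha\approx x-\bar u(-\infty)t$). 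Part (iii) with $\bar u(-\infty)<0$ is the structurally different case: here $d\bar\mu_{ac}=\bar u_x^2\,dx$ and $\bar\mu((-\infty,\ell))=0$ force $\bar u_x=0$ a.e.\ on $(-\infty,\ell)$, hence $\bar u\equiv\bar u(-\infty)$ on $(-\infty,\ell]$, so $y(\alpha,t)=\alpha+\bar u(-\infty)t$ for $\alpha\le\ell$; for large $t$ this stays $<x$ on $\{\alpha\le\ell\}$, hence $\alpha_x(t)>\ell$, and then the basic relation (whose middle side is $O(t)$) forces $\xi(t)\downarrow\ell$. This proves \eqref{eq:compact10}, and $\xi(t)\to-\infty$ already yields the upper bound $\le 0$ in \eqref{eq:leftlimit1}.

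Third, I would read off the rates. When $\xi(t)\to-\infty$ and $\theta_1>0$: the term $\bar u(\xi(t))t$ is $O(t)=o(t^{1+\theta_1})$, hence negligible next to $\xi(t)$ (once one checks $|\xi(t)|\asymp t^{1+\theta_1}$, which the basic relation also supplies); translating \eqref{eq:leftcondition1}/\eqref{eq:leftcondition2} via the substitution $-|z|^{1+\theta_1}=\xi(t)$ gives $\bar\mu((-\infty,\xi(t)))\asymp A_1|\xi(t)|^{-(1-\theta_1)/(1+\theta_1)}$, and inserting this into the basic relation and dividing by $t^{1+\theta_1}$ gives, with $w(t):=-\xi(t)/t^{1+\theta_1}$, the relation $w(t)^{2/(1+\theta_1)}\to A_1/4$ under \eqref{eq:leftcondition2} (and the corresponding one-sided estimate under \eqref{eq:leftcondition1}), i.e.\ the limit $-(A_1/4)^{(1+\theta_1)/2}$ in \eqref{eq:leftlimit1}--\eqref{eq:leftlimit2}, while $A_1=0$ yields \eqref{eq:leftlimit3}. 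When $\xi(t)\to-\infty$ and $\theta_1=0$: now $\bar u(\xi(t))\to\bar u(-\infty)$ is of the same order $t$ and must be kept, and with $w(t):=-\xi(t)/t$ (which the basic relation shows is bounded, and bounded away from $0$ when $A_1>0$) one obtains $w(t)^2-\bar u(-\infty)w(t)\to A_1/4$; the nonnegative root is $\tfrac12\bar u(-\infty)+\tfrac12\sqrt{\bar u^2(-\infty)+A_1}$, giving the $\theta_1=0$ case of \eqref{eq:leftlimit2}, and for $A_1=0$ the two roots $0$ and $\bar u(-\infty)$ are separated by the sign of $\bar u(-\infty)$ -- ruling out $w(t)\to0$ when $\bar u(-\infty)>0$ since the middle side of the basic relation would then turn negative, and ruling out the negative root when $\bar u(-\infty)\le0$ since $w(t)\ge0$ -- yielding \eqref{eq:leftlimit4}; finally \eqref{eq:leftlimit11} is just the qualitative statement that $w(t)$ is bounded, which the basic relation gives under \eqref{eq:leftcondition1}.

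The main obstacle is the bookkeeping at the threshold $\theta_1=0$, where the energy tail and the value $\bar u(-\infty)$ live at the same scale $t$: one must retain all $O(t)$ contributions, prove a priori that $w(t)$ stays in a compact subset of $(0,\infty)$ before passing to the limit, and pick out the correct root of the limiting quadratic. A secondary nuisance is the presence of atoms of $\bar\mu$ -- distinguishing $\bar\mu((-\infty,\xi(t)))$ from $\bar\mu((-\infty,\xi(t)])$ in the basic relation, and handling a possible atom at $\ell$ in part (iii) -- which is dealt with by perturbing $\xi(t)$ by an arbitrarily small amount and using the monotonicity of $z\mapsto\bar\mu((-\infty,z))$.
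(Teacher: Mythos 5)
Your route is essentially the paper's: your ``basic relation'' is precisely the paper's identity \eqref{eq:pro11} combined with the sandwich \eqref{eq:barx1}; the rates come from the same limiting algebraic equations (the power equation $w^{2/(1+\theta_1)}\to A_1/4$ for $0<\theta_1<1$ and the quadratic $w^2-\bar{u}(-\infty)w\to A_1/4$ for $\theta_1=0$, with the same selection of the nonnegative root); the $s\to 1^+$/perturbation trick for converting \eqref{eq:leftcondition2} into a statement about $\bar{\mu}((-\infty,\bar{x}(\alpha_x(t_n))))$ is the paper's Equation \eqref{eq:important}; and your treatment of part (iii) via $\bar{u}\equiv\bar{u}(-\infty)$ and $\bar{x}(\alpha)=\alpha$ on $(-\infty,\ell)$ matches Appendix~\ref{app:compact} (your direct location of the crossing at $\alpha\approx x-\bar{u}(-\infty)t$ is arguably more transparent than the paper's dichotomy lemma, but it is the same computation).

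One step is wrong as stated. You claim that $A_1>0$ in \eqref{eq:leftcondition1} forces $\bar{\mu}((-\infty,\xi(t_n)))$ to be bounded below by a positive constant whenever $\xi(t_n)$ is bounded below, and hence that $\xi(t)\to-\infty$ ``in parts (i) and (ii)''. Condition \eqref{eq:leftcondition1} is only an \emph{upper} bound on the left tail (a limsup bounded by $A_1$), so it is satisfied vacuously by any $\bar{\mu}$ with $\bar{\mu}((-\infty,z_0))=0$ for some $z_0$ (e.g.\ compactly supported $\bar{\mu}$), and in that sub-case $\xi(t)$ need not tend to $-\infty$: for $\bar{u}(-\infty)\le 0$ it converges to $\ell$. (Part (ii) is unaffected, since there $A_1>0$ under \eqref{eq:leftcondition2}, or the explicit hypothesis $\bar{\mu}((-\infty,z))>0$, genuinely forces $\xi(t)\to-\infty$.) The conclusions of part (i) survive — for $0<\theta_1<1$ the degenerate sub-case is trivial, and for $\theta_1=0$ the bound $\limsup_{t\to+\infty}\xi(t)/t\le 0$ and the finiteness of the liminf can be recovered either by the direct contradiction arguments from the basic relation (which do not need $\xi\to-\infty$) or, as the paper does, by splitting off the case $\bar{\mu}((-\infty,z_0))=0$ and invoking part (iii), noting that under \eqref{eq:leftcondition3} the limit $\bar{u}(-\infty)$ exists automatically so that \eqref{eq:leftlimit4} applies and yields \eqref{eq:leftlimit11}. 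You should add this case distinction explicitly.
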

\begin{proof}
For any fixed $x\in\mathbb{R}$, evaluating \eqref{eq:xbarbeta} at $\alpha=\alpha_x(t)$ and using the fact that $y(\alpha_x(t),t)=x$, we have
\begin{align}\label{eq:pro11}
\frac{4x}{t^{1+\theta_1}}-\frac{4}{t^{1+\theta_1}}\bar{x}(\alpha_x(t))-\frac{4}{t^{\theta_1}}\bar{u}(\bar{x}(\alpha_x(t)))=t^{1-\theta_1}[\alpha_x(t)-\bar{x}(\alpha_x(t))].
\end{align}
We first claim the following fact:
\begin{claim}\label{claim:alpha_x->infty}
	{
	Let $0\leq \theta_1<1$, and $\bar{\mu}$ satisfy $\bar{\mu}((-\infty,z))>0$ for any $z\in\mathbb{R}$. Then
	}
%	{\color{red} Replace the green part below by the above blue part. }
%	{\color{green}  
%	For $\bar{\mu}$ satisfying $\bar{\mu}((-\infty,z))>0$ for any $z\in\mathbb{R}$, and any $0\leq \theta_1<1$, 
%	}
	\begin{align}\label{eq:negative}
		\alpha_x(t)\to-\infty~\textrm{ and }~\bar{x}(\alpha_x(t))\to-\infty~\textrm{ as }~t\to+\infty.
	\end{align}
\end{claim}
\begin{proof}
	Seeking for a contradiction, we assume that there exist a constant $M>0$ and a sequence of $t_n\to +\infty$ such that
	$\bar{x}(\alpha_x(t_n))\geq -M$ for all $n$.
	Since $\bar{\mu}$ is not compactly supported, we have
	\begin{equation}\label{eq:differencealphax}
		\liminf_{n\to +\infty}[\alpha_x(t_n)-\bar{x}(\alpha_x(t_n))] \geq \liminf_{n\to +\infty}\bar{\mu}((-\infty, \bar{x}(\alpha_x(t_n)))
		\geq \bar{\mu}((-\infty, -M))>0,
	\end{equation}
	where we used \eqref{eq:barx1} in the first inequality above. Evaluating \eqref{eq:pro11} at $t:=t_n$, and then passing to the limit superior as $n\to+\infty$, the left hand side satisfies
	\begin{equation}\label{eq:limsupLHS}
		\limsup_{n\to+\infty}\left[\frac{4x}{t_n^{1+\theta_1}}-\frac{4}{t_n^{1+\theta_1}}\bar{x}(\alpha_x(t_n))-\frac{4}{t_n^{\theta_1}}\bar{u}(\bar{x}(\alpha_x(t_n)))\right]\leq \limsup_{n\to+\infty}\frac{4M}{t_n^{1+\theta_1}}+4\|\bar{u}\|_{L^\infty}= 4\|\bar{u}\|_{L^\infty},
	\end{equation}
while the right hand side satisfies 
\[
\lim_{n\to+\infty}t_n^{1-\theta_1}[\alpha_x(t_n)-\bar{x}(\alpha_x(t_n))]=+\infty
\]
due to \eqref{eq:differencealphax},
which contradicts \eqref{eq:limsupLHS}. This shows that $\bar{x}(\alpha_x(t))\to-\infty$ as $t\to+\infty$. In addition, evaluating \eqref{eq:barx1} at $\alpha=\alpha_x(t)$, and then passing to the limit as $t\to +\infty$, as well as using the fact that $\bar{\mu}(\mathbb{R})<+\infty$, we also have 
$\alpha_x(t)\to -\infty$ as $t\to+\infty$.
\end{proof}

(i)  
For $0<\theta_1<1$, passing to the limit inferior in \eqref{eq:pro11} as $t\to +\infty$, and using the boundedness of $\bar{u}$, we have $\limsup_{t\to+\infty}\frac{\bar{x}(\alpha_x(t))}{t^{1+\theta_1}}\leq 0$ because the right hand of  \eqref{eq:pro11} is nonnegative by the first inequality in \eqref{eq:barx1}. Now, it remains to prove the first inequality in \eqref{eq:leftlimit1}. Seeking for a contradiction, we assume that 
\[
\liminf_{t\to+\infty}\frac{\bar{x}(\alpha_x(t))}{t^{1+\theta_1}}\leq -\left(\frac{A_1}{4}\right)^{\frac{1+\theta_1}{2}}-\epsilon,
\]
for some $\epsilon>0$. 
Then there exists a sequence $t_n \to +\infty$ such that $\bar{x}(\alpha_x(t_n)) \leq -\left(\frac{A_1}{4}\right)^{\frac{1+\theta_1}{2}} t^{1+\theta_1}_n - 
\frac{\epsilon}{2} t^{1+\theta_1}_n$ for all $n\in\mathbb{N}$. Then using \eqref{eq:barx1}  and \eqref{eq:leftcondition1}, we have for any sufficiently large $n$, 
\[
\alpha_x(t_n)-\bar{x}(\alpha_x(t_n))\leq \bar{\mu}((-\infty,\bar{x}(\alpha_x(t_n))])\leq \bar{\mu}\left(\left(-\infty, -\left(\frac{A_1}{4}\right)^{\frac{1+\theta_1}{2}} t^{1+\theta_1}_n\right)\right)\leq \frac{4}{t_n^{1-\theta_1}}\left( \left(\frac{A_1}{4}\right)^{\frac{1+\theta_1}{2}} +\frac{\epsilon}{4}\right).
\]
Hence, it follows from \eqref{eq:xbarbeta} that for any given $x\in\mathbb{R}$,
\begin{equation*}
\begin{aligned}
x=y(\alpha_x(t_n),t_n) &= \bar{x}(\alpha_x(t_n))+\bar{u}(\bar{x}(\alpha_x(t_n)))t_n+\frac{t_n^2}{4}(\alpha_x(t_n)-\bar{x}(\alpha_x(t_n)))\\
&\leq -\left(\frac{A_1}{4}\right)^{\frac{1+\theta_1}{2}} t^{1+\theta_1}_n -\frac{\epsilon}{2} t^{1+\theta_1}_n +\|\bar{u}\|_{L^{\infty}}t_n + \left(\frac{A_1}{4}\right)^{\frac{1+\theta_1}{2}}{t_n^{1+\theta_1}} +\frac{\epsilon}{4}t^{1+\theta_1}_n \\
&=-\frac{\epsilon}{4}t^{1+\theta_1}_n + \|\bar{u}\|_{L^{\infty}}t_n\to-\infty,
\end{aligned}
\end{equation*}
as $n\to+\infty$, which is a contradiction. 

For $\theta_1=0$, we separate the proof into two cases.

\textbf{Case 1.} 
Consider $\bar{\mu}((-\infty,z))>0$ for any $z\in\mathbb{R}$.
In particular, \eqref{eq:negative} holds with $\theta_1=0$, which implies
\[
\limsup_{t\to+\infty}\frac{\bar{x}(\alpha_x(t))}{t}\leq 0.
\]
Rearranging \eqref{eq:pro11} yields
\begin{align}\label{eq:pro111}
	-\frac{4\bar{x}(\alpha_x(t))}{t}=t[\alpha_x(t)-\bar{x}(\alpha_x(t))]+4\bar{u}(\bar{x}(\alpha_x(t)))-\frac{4x}{t}.
\end{align}
Next, we prove the first inequality in \eqref{eq:leftlimit11} by contradiction. Seeking for a contradiction, we assume that 
\[
\liminf_{t\to+\infty}\frac{\bar{x}(\alpha_x(t))}{t}=-\infty.
\]
Then there exists $t_n\to+\infty$ such that $\lim_{n\to+\infty}\frac{\bar{x}(\alpha_x(t_n))}{t_n}=-\infty$. Hence $\bar{x}(\alpha_x(t_n))<-t_n$ for any sufficiently large $n$. Evaluating \eqref{eq:pro111} at $t=t_n$, and then passing to the limit as $n\to +\infty$, we obtain
\begin{equation}
\begin{aligned}
+\infty=\lim_{n\to+\infty}\left(-\frac{4\bar{x}(\alpha_x(t_n))}{t_n}\right)=&\lim_{n\to+\infty}\left(t_n[\alpha_x(t_n)-\bar{x}(\alpha_x(t_n))]+4\bar{u}(\bar{x}(\alpha_x(t_n)))-\frac{4x}{t_n}\right)\\
\leq&\limsup_{n\to+\infty}t_n\bar{\mu}((-\infty,\bar{x}(\alpha_x(t_n))])+4\|\bar{u}\|_{L^\infty}\\
\leq&\limsup_{n\to+\infty}t_n\bar{\mu}((-\infty,-t_n))+4\|\bar{u}\|_{L^\infty}\leq A_1+4\|\bar{u}\|_{L^\infty},
\end{aligned}
\end{equation}
which is a contradiction.

\textbf{Case 2.} Consider $\bar{\mu}((-\infty,z_0))=0$ for some $z_0\in\mathbb{R}$. 
Then one can directly apply the results in part (iii), which will be proved in Appendix \ref{app:compact}, and obtain \eqref{eq:leftlimit4}, which immediately implies \eqref{eq:leftlimit11}. It is worth noting that the proof of part (iii) does not rely on any result in part (i), so there is no circular argument here indeed.

(ii)(a)
{ It follows from \eqref{eq:leftcondition2} and $A_1>0$ that $\bar{\mu}$ satisfies $\bar{\mu}((-\infty,z))>0$ for any $z\in\mathbb{R}$, so by Claim~\ref{claim:alpha_x->infty}, the limit \eqref{eq:negative} holds.}
Then we claim that there exist some positive constants $a$ and $b$ such that
\begin{align}\label{eq:leftlimclaim1}
\liminf_{t\to +\infty}\frac{4}{t^{1+\theta_1}}\bar{x}(\alpha_x(t))=-a<0~\textrm{ and }~ \limsup_{t\to +\infty}\frac{4}{t^{1+\theta_1}}\bar{x}(\alpha_x(t))=-b<0.
\end{align}
Since the condition \eqref{eq:leftcondition2} is stronger than \eqref{eq:leftcondition1}, we can apply part (i) and obtain \eqref{eq:leftlimit1}. Hence, we only need to prove $\limsup_{t\to +\infty}\frac{4}{t^{1+\theta_1}}\bar{x}(\alpha_x(t))=-b<0.$ 
Seeking for a contradiction, we assume that there exists a sequence $t_n \to +\infty$ such that
\begin{equation}\label{eq:SfaCLimit=0}
	\lim_{n\to +\infty}\frac{4}{t_n^{1+\theta_1}}\bar{x}(\alpha_x(t_n))=0.
\end{equation}
Then for any sufficiently large $n$, we have $\bar{x}(\alpha_x(t_n))>-t_n^{1+\theta_1}$, and hence, it follows from the first inequality in \eqref{eq:barx1} that
\[
\alpha_x(t_n)-\bar{x}(\alpha_x(t_n))\geq\bar{\mu}((-\infty, \bar{x}(\alpha_x(t_n))))\geq\bar{\mu}((-\infty, -t_n^{1+\theta_1})).
\]
For $0<\theta_1<1$, it follows from \eqref{eq:pro11}, \eqref{eq:SfaCLimit=0} and \eqref{eq:leftcondition2} that 
\begin{equation}\label{eq:onlydiff1}
\begin{aligned}
0&=-\lim_{n\to +\infty}\frac{4}{t_n^{1+\theta_1}}\bar{x}(\alpha_x(t_n))=\lim_{n\to +\infty}t_n^{1-\theta_1}[\alpha_x(t_n)-\bar{x}(\alpha_x(t_n))] \\
&\geq \lim_{n\to +\infty}t_n^{1-\theta_1}{\bar{\mu}((-\infty, -t_n^{1+\theta_1}))} = A_1,
\end{aligned}
\end{equation}
which contradicts the hypothesis $A_1>0$. 
For $\theta_1=0$, using \eqref{eq:pro11}, \eqref{eq:negative}, \eqref{eq:SfaCLimit=0} and \eqref{eq:leftcondition2} again, we have 
\begin{equation}\label{eq:onlydiff}
\begin{aligned}
0&=-\lim_{n\to +\infty}\frac{4}{t_n}\bar{x}(\alpha_x(t_n))=\lim_{n\to +\infty}t_n[\alpha_x(t_n)-\bar{x}(\alpha_x(t_n))] + 4\bar{u}(-\infty)\\
&\geq \lim_{n\to +\infty}\frac{t_n}{\bar{x}(\alpha_x(t_n))} \bar{x}(\alpha_x(t_n)){\bar{\mu}((-\infty, \bar{x}(\alpha_x(t_n))))} +4\bar{u}(-\infty)=+\infty,
\end{aligned}
\end{equation}
which is also a contradiction.  Hence, the claim \eqref{eq:leftlimclaim1} holds.

Next, we are going to give the exact values of  $a$ and $b$  in \eqref{eq:leftlimclaim1}; as a consequence, they are indeed equal.
For any sequence $t_n$ satisfying $\lim_{n\to+\infty}\frac{4}{t_n^{1+\theta_1}}\bar{x}(\alpha_x(t_n))=-a$, and for any constant $s>1$, 
\[
-\frac{as}{4} t_n^{1+\theta_1}<\bar{x}(\alpha_x(t_n))<-\frac{as^{-1}}{4} t_n^{1+\theta_1}
\] 
for all sufficiently large $n$, and hence, using \eqref{eq:leftcondition2}, we have
\begin{multline*}
{A_1}\left(\frac{as}{4}\right)^{-\frac{1-\theta_1}{1+\theta_1}}= \lim_{n\to +\infty}t_n^{1-\theta_1}{\bar{\mu}\left(\left(-\infty, -\frac{as}{4}t_n^{1+\theta_1}\right)\right)} \leq \liminf_{n\to +\infty}t_n^{1-\theta_1}\bar{\mu}\left(\left(-\infty, \bar{x}(\alpha_x(t_n))\right)\right) \\
\leq \limsup_{n\to +\infty}t_n^{1-\theta_1}\bar{\mu}\left(\left(-\infty, \bar{x}(\alpha_x(t_n))\right]\right)\leq \lim_{n\to +\infty}t_n^{1-\theta_1}{\bar{\mu}\left(\left(-\infty, -\frac{as^{-1}}{4}t_n^{1+\theta_1}\right)\right)} = 	{A_1}\left(\frac{as^{-1}}{4}\right)^{-\frac{1-\theta_1}{1+\theta_1}}.
\end{multline*}
Since this holds for any $s>1$, passing to the limit in the above inequalities as $s\to 1^+$ yields 
\begin{align}\label{eq:important}
\lim_{n\to +\infty}t_n^{1-\theta_1}\bar{\mu}\left(\left(-\infty, \bar{x}(\alpha_x(t_n))\right)\right)=\lim_{n\to +\infty}t_n^{1-\theta_1}\bar{\mu}\left(\left(-\infty, \bar{x}(\alpha_x(t_n))\right]\right) ={A_1}\left(\frac{a}{4}\right)^{-\frac{1-\theta_1}{1+\theta_1}}.
\end{align}

For $0<\theta_1<1$, evaluating \eqref{eq:pro11} at $t=t_n$, and then passing to the limit as $n\to +\infty$, as well as using \eqref{eq:leftlimclaim1}, the boundedness of $\bar{u}$, \eqref{eq:barx1} and \eqref{eq:important}, we have
\begin{equation}\label{eq:equationfora}
a=\lim_{n\to +\infty}t_n^{1-\theta_1}[\alpha_x(t_n)-\bar{x}(\alpha_x(t_n))] = \lim_{n\to \infty}t_n^{1-\theta_1}\bar{\mu}\left(\left(-\infty, \bar{x}(\alpha_x(t_n))\right)\right)
=A_1\left(\frac{a}{4}\right)^{-\frac{1-\theta_1}{1+\theta_1}},
\end{equation}
and hence, $a=2^{1-\theta_1}{A_1}^{\frac{1+\theta_1}{2}}$. Similarly, by passing to a sequence $t_n\to +\infty$ that  achieves the limit superior of $\frac{4}{t^{1+\theta_1}}\bar{x}(\alpha_x(t))$, we can also obtain that $b=2^{1-\theta_1}{A_1}^{\frac{1+\theta_1}{2}}$, and hence, $b=a$ indeed; in other words, we have just shown that
\begin{equation*}\label{eq:lim4xbar/t^1+theta}
	\lim_{t\to +\infty}\frac{4}{t^{1+\theta_1}}\bar{x}(\alpha_x(t))=-2^{1-\theta_1}{A_1}^{\frac{1+\theta_1}{2}},
\end{equation*} 
%Hence, passing to the limit in \eqref{eq:pro11} as $t\to +\infty$ and using \eqref{eq:lim4xbar/t^1+theta}, we finally obtain
%\begin{align}\label{eq:xlimit}
%\lim_{t\to+\infty}\frac{\bar{x}(\alpha_x(t))}{t^{1+\theta_1}}=-\lim_{t\to+\infty}\frac{1}{4}t^{1-\theta_1}[\alpha_x(t)-\bar{x}(\alpha_x(t))]=-\frac{a}{4}=-\left(\frac{A_1}{4}\right)^{\frac{1+\theta_1}{2}}.
%\end{align}
which is equivalently to the limit \eqref{eq:leftlimit2}.

For $\theta_1=0$, using \eqref{eq:pro11},  \eqref{eq:leftlimclaim1} and \eqref{eq:important} in a similar manner (as in the case $0<\theta_1<1$ above),  as well as using \eqref{eq:negative}, we can also obtain 
\begin{equation*}
	\begin{aligned}
		a-4 \bar{u}(-\infty) &= -\liminf_{n\to+\infty}\frac{4}{t_n}\bar{x}(\alpha_x(t_n)) -4\lim_{n\to +\infty}\bar{u}(\bar{x}(\alpha_x(t_n))\\
		&= \lim_{n\to+\infty}t_n[\alpha_x(t_n)-\bar{x}(\alpha_x(t_n))] =\lim_{n\to+\infty}t_n\bar{\mu}((-\infty,\bar{x}(\alpha(t_n))))=\frac{4{A_1}}{a},
	\end{aligned}
\end{equation*}
and hence, the positive real number $a$ indeed satisfies the quadratic equation
$
a^2 - 4 \bar{u}(-\infty)a -4{A_1}=0.
$
Solving this quadratic equation, we obtain $a = 2\bar{u}(-\infty) + 2\sqrt{\bar{u}^2(-\infty) +{A_1}}$ by choosing the (unique) positive root. The same argument also works for computing $b$, so we can also have $b = 2\bar{u}(-\infty) + 2\sqrt{\bar{u}^2(-\infty) +{A_1}}$, which implies that $a=b$. This shows the limit \eqref{eq:leftlimit2} for the case $\theta_1=0$, and completes the proof of part (ii)(a).

(ii)(b) The main idea of the proof is similar to that of part (ii)(a), since one could also think the results in this case as the limiting case of part (ii)(a) by passing to the limit as $A_1\to 0^+$. In the proof, we will effectively use \eqref{eq:pro11} and various limits that will be derived by using \eqref{eq:leftcondition2} with $A_1=0$. The details will be provided in Appendix~\ref{app:proofiib}.

(iii) The proof of \eqref{eq:compact10} is in the same spirit to that of part (ii)(b) (i.e., for $A_1=\theta_1=0$ under \eqref{eq:leftcondition2}), and one can find the proof in Appendix \ref{app:compact}.

Finally, if we replace $\bar{x}(\alpha_x(t))$ by $\alpha_x(t)$ in \eqref{eq:leftlimit1}-\eqref{eq:leftlimit4}, then due to the fact that
\[
0\leq \alpha_x(t)-\bar{x}(\alpha_x(t))\leq \bar{\mu}(\mathbb{R})<+\infty ,
\]	
one can verify that all the limit indeed exist and \eqref{eq:leftlimit1}-\eqref{eq:leftlimit4} also hold. For replacing $\bar{x}(\alpha_x(t))$ by $\alpha_x(t)$ in \eqref{eq:compact10}, the same conclusion also holds; see Appendix~\ref{app:compact} for further details.
\end{proof}

\begin{remark}\label{rem:sublinear}
If one knows more explicit decay information on the tail of $\bar{\mu}$, then one will be able to provide a more detailed description here. For example, consider the case $\theta_1=A_1=0$ in \eqref{eq:leftcondition2} and $\bar{u}(-\infty)<0$. If there exist constants $A>0$ and $\eta>0$ such that 
\[
\lim_{x\to -\infty}|x|^{1+\eta}{\bar{\mu}((-\infty, -|x|))}=A>0,
\]
then similar to the proof of \eqref{eq:leftlimit2}, one can obtain
\[
\lim_{t \to +\infty} \frac{|\alpha_x(t)|}{t^{\frac{1}{1+\eta}}} =\lim_{t \to +\infty} \frac{|\bar{x}(\alpha_x(t))|}{t^{\frac{1}{1+\eta}}} = \left(\frac{A}{- 4\bar{u}(-\infty)}\right)^{\frac{1}{1+\eta}};
\]
in other words, as functions of $t$, both the $\alpha_x(t)$ and $\bar{x}(\alpha_x(t))$ must have a sublinear growth rate that depends on the $\bar{u}(-\infty)$ and the refined asymptotic behavior of $\bar{\mu}$, namely the parameters $A$ and $\eta$.
\end{remark}

Lemma~\ref{lemma:properties} provides the asymptotic behavior of $\alpha_x(t)$ for any fixed $x\in\mathbb{R}$ under the decay assumptions on the tail of $\bar{\mu}$ at $-\infty$ and the sign condition on $\bar{u}(-\infty)$. We can also apply similar methods to obtain estimates for $\alpha_r(t)$ under the assumptions on the decay of the tail of $\bar{\mu}$ at $+\infty$  and the sign condition on $\bar{u}(+\infty)$ as follows. 
\begin{lemma}\label{lemma:propertiesright}
Let $(u,\mu)$ be a conservative solution to the Hunter-Saxton equation subject to an initial data $(\bar{u},\bar{\mu})\in\mathcal{D}$. Let $0\leq \theta_2<1$ and $A_2\geq0$ be some constants. Then for the function $\alpha_r(t)$ defined by \eqref{eq:alpha01}, we have the following statements:
\begin{enumerate}
\item[(i)] If $A_2>0$, $0<\theta_2<1$ and $\bar{\mu}$ satisfies \eqref{eq:rightcondition1}, then we have
\begin{align}\label{eq:rightlimit1}
0\leq \liminf_{t\to+\infty}\frac{\bar{x}(\alpha_r(t))}{t^{1+\theta_2}}\leq \limsup_{t\to+\infty}\frac{\bar{x}(\alpha_r(t))}{t^{1+\theta_2}}\leq \left(\frac{A_2}{4}\right)^{\frac{1+\theta_2}{2}}.
\end{align}
If $A_2>0$, $\theta_2=0$ and $\bar{\mu}$ satisfies  \eqref{eq:rightcondition1}, then we have
\begin{align}\label{eq:rightlimit11}
	0\leq \liminf_{t\to+\infty}\frac{\bar{x}(\alpha_r(t))}{t^{1+\theta_2}}\leq \limsup_{t\to+\infty}\frac{\bar{x}(\alpha_r(t))}{t^{1+\theta_2}}< +\infty.
\end{align}
\item[(ii)] Assume that $\bar{u}$ satisfies    $\lim_{x\to+\infty}\bar{u}(x)=\bar{u}(+\infty)\in\mathbb{R}$ and $\bar{\mu}$ satisfies 
\begin{align}\label{eq:rightcondition2}
\lim_{x\to +\infty}x^{1-\theta_2}{\bar{\mu}((x^{1+\theta_2},+\infty))} = A_2. \tag{R2}
\end{align} 
Then
\begin{enumerate}
\item[(a)] if $A_2>0$ and $0\leq \theta_2<1$, we have 
\begin{equation}\label{eq:rightlimit2}
\lim_{t\to+\infty}\frac{\bar{x}(\alpha_r(t))}{t^{1+\theta_2}} = \left\{
\begin{aligned}
&\left(\frac{A_2}{4}\right)^{\frac{1+\theta_2}{2}}~\textrm{ for }~0<\theta_2<1;\\
&-\frac{1}{2}\bar{u}(+\infty)+\frac{1}{2}\sqrt{\bar{u}^2(+\infty) +{A_2}}~\textrm{ for }~\theta_2=0;
\end{aligned}
\right.
\end{equation}
\item[(b)] 
if $A_2=0$, $0<\theta_2<1$  and $\bar{\mu}$ satisfies $\bar{\mu}((z,+\infty))>0$ for any $z\in\mathbb{R}$, we have
\begin{align}\label{eq:rightlimit3}
\lim_{t\to+\infty}\frac{\bar{x}(\alpha_r(t))}{t^{1+\theta_2}}=0;
\end{align}
if $A_2=0$, $\theta_2=0$ and $\bar{\mu}$ satisfies $\bar{\mu}((z,+\infty))>0$ for any $z\in\mathbb{R}$, we have
\begin{equation}\label{eq:rightlimit4}
\lim_{t\to+\infty}\frac{\bar{x}(\alpha_r(t))}{t}=\left\{
\begin{aligned}
& -\bar{u}(+\infty)~~\textrm{ if }~~\bar{u}(+\infty)\leq 0;\\
&0~\textrm{ if }~\bar{u}(+\infty)>0.
\end{aligned}\right.
\end{equation}
\end{enumerate}

\item[(iii)] If $\bar{\mu}$ satisfies 
$
r:=\sup\mathrm{supp}\{\bar{\mu}\}<+\infty,
$
%for some constant $r\in\mathbb{R}$, 
then we have
\begin{equation}\label{eq:compact20}
	\lim_{t\to+\infty}\alpha_r(t)=\left\{
	\begin{aligned}
		&r+\bar{\mu}(\mathbb{R})~\textrm{ if }~\bar{u}(+\infty)>0,\\
		&+\infty~\textrm{ if }~\bar{u}(+\infty)<0,
	\end{aligned}
	\right.\quad
		\lim_{t\to+\infty}\bar{x}(\alpha_r(t))=\left\{
	\begin{aligned}
		&r~\textrm{ if }~\bar{u}(+\infty)>0,\\
		&+\infty~\textrm{ if }~\bar{u}(+\infty)<0,
	\end{aligned}
	\right.
\end{equation}
and \eqref{eq:rightlimit4} also holds.

\end{enumerate}
Moreover, all of the above results in (i) and (ii) will also hold if we replace $\bar{x}(\alpha_r(t))$ by $\alpha_r(t)$.
\end{lemma}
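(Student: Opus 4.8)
The plan is to run the same scheme as in the proof of Lemma~\ref{lemma:properties}, replacing the identity \eqref{eq:pro1}/\eqref{eq:pro11} by \eqref{eq:pro2}/\eqref{eq:alpha1t}, and replacing the ``left error'' $\alpha_x(t)-\bar{x}(\alpha_x(t))$ (which lies between $\bar{\mu}((-\infty,\bar{x}(\alpha_x(t))))$ and $\bar{\mu}((-\infty,\bar{x}(\alpha_x(t))])$) by the ``right error''
\[
E(t):=\bar{\mu}(\mathbb{R})-[\alpha_r(t)-\bar{x}(\alpha_r(t))],
\]
which by \eqref{eq:barx1} satisfies $0\le\bar{\mu}((\bar{x}(\alpha_r(t)),+\infty))\le E(t)\le\bar{\mu}([\bar{x}(\alpha_r(t)),+\infty))$. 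Rewriting \eqref{eq:pro2} gives $\bar{x}(\alpha_r(t))+\bar{u}(\bar{x}(\alpha_r(t)))t=\frac{t^2}{4}E(t)$, and dividing by $\frac{1}{4}t^{1+\theta_2}$ yields the working identity $\frac{4}{t^{1+\theta_2}}\bar{x}(\alpha_r(t))+\frac{4}{t^{\theta_2}}\bar{u}(\bar{x}(\alpha_r(t)))=t^{1-\theta_2}E(t)$, the exact analogue of \eqref{eq:pro11}. The essential new feature is that the leading behaviour is now reversed: $\bar{x}(\alpha_r(t))$ tends to $+\infty$ rather than $-\infty$, which is why all the limits and the sign conditions on $\bar{u}(+\infty)$ in \eqref{eq:rightlimit2}--\eqref{eq:compact20} come out mirrored with respect to Lemma~\ref{lemma:properties}; apart from these sign changes, the arguments transcribe directly.

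First I would record the analogue of Claim~\ref{claim:alpha_x->infty}: if $\bar{\mu}((z,+\infty))>0$ for every $z\in\mathbb{R}$, then $\alpha_r(t)\to+\infty$ and $\bar{x}(\alpha_r(t))\to+\infty$ as $t\to+\infty$. This follows by contradiction exactly as before: if $\bar{x}(\alpha_r(t_n))\le M$ along a sequence, then $E(t_n)\ge\bar{\mu}((M,+\infty))>0$, so $\bar{x}(\alpha_r(t_n))+\bar{u}(\bar{x}(\alpha_r(t_n)))t_n=\frac{t_n^2}{4}E(t_n)$ grows quadratically, contradicting the bound $M+\|\bar{u}\|_{L^\infty}t_n$ for the left-hand side; the statement for $\alpha_r(t)$ then follows from $|\alpha_r(t)-\bar{x}(\alpha_r(t))|\le\bar{\mu}(\mathbb{R})$. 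With this, part (i) is obtained as in Lemma~\ref{lemma:properties}(i): for $\theta_2>0$ the nonnegativity of $E(t)$ and the working identity give $\liminf_{t\to+\infty}\bar{x}(\alpha_r(t))/t^{1+\theta_2}\ge0$, and for $\theta_2=0$ the same lower bound comes from $\bar{x}(\alpha_r(t))\to+\infty$; for the upper bound one argues by contradiction, assuming $\bar{x}(\alpha_r(t_n))\ge(L-\epsilon)t_n^{1+\theta_2}$ with $L>(A_2/4)^{(1+\theta_2)/2}$, writing $(L-\epsilon)t_n^{1+\theta_2}=x_n^{1+\theta_2}$ and using \eqref{eq:rightcondition1} to get $t_n^{1-\theta_2}E(t_n)\le t_n^{1-\theta_2}\bar{\mu}([\bar{x}(\alpha_r(t_n)),+\infty))\lesssim A_2(L-\epsilon)^{-(1-\theta_2)/(1+\theta_2)}$, which after $\epsilon\to0^+$ forces $4L\le A_2L^{-(1-\theta_2)/(1+\theta_2)}$, i.e. $L\le(A_2/4)^{(1+\theta_2)/2}$, a contradiction; as in Case~2 of Lemma~\ref{lemma:properties}(i), the compact-support situation $r:=\sup\mathrm{supp}\,\bar{\mu}<+\infty$ is handled for $\theta_2=0$ by simply invoking part (iii).

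For part (ii) under the limit assumption \eqref{eq:rightcondition2}: in case (a) one first shows by the contradiction scheme of Lemma~\ref{lemma:properties}(ii)(a) (if $\bar{x}(\alpha_r(t_n))/t_n^{1+\theta_2}\to0$ then eventually $\bar{x}(\alpha_r(t_n))<t_n^{1+\theta_2}$, so $t_n^{1-\theta_2}E(t_n)\ge t_n^{1-\theta_2}\bar{\mu}((t_n^{1+\theta_2},+\infty))\to A_2>0$, contradicting $t_n^{1-\theta_2}E(t_n)\to0$) that $\liminf$ and $\limsup$ of $\bar{x}(\alpha_r(t))/t^{1+\theta_2}$ are finite and strictly positive; then, passing to sequences attaining these values and using \eqref{eq:rightcondition2} to identify $\lim t^{1-\theta_2}E(t)$ along them (the analogue of \eqref{eq:important}), the working identity forces the limiting value $c$ to satisfy $c=(A_2/4)c^{-(1-\theta_2)/(1+\theta_2)}$ for $\theta_2>0$ and the quadratic $c^2+\bar{u}(+\infty)c-A_2/4=0$ for $\theta_2=0$, whose positive roots $c=(A_2/4)^{(1+\theta_2)/2}$ and $c=-\tfrac12\bar{u}(+\infty)+\tfrac12\sqrt{\bar{u}^2(+\infty)+A_2}$ give $\liminf=\limsup$, i.e. \eqref{eq:rightlimit2}. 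Case (b) ($A_2=0$) is the limiting case $A_2\to0^+$ proved by the same method with \eqref{eq:rightcondition2} using $A_2=0$, which I would defer to an appendix in parallel with Appendix~\ref{app:proofiib}, and part (iii) ($r<+\infty$) is proved in the spirit of Appendix~\ref{app:compact}: if $\bar{u}(+\infty)<0$ the characteristic is pushed rightwards and $\bar{x}(\alpha_r(t))\sim-\bar{u}(+\infty)t\to+\infty$, while if $\bar{u}(+\infty)>0$ one shows $\bar{x}(\alpha_r(t))\to r$ and $\alpha_r(t)\to r+\bar{\mu}(\mathbb{R})$, using $\alpha-\bar{x}(\alpha)=\bar{\mu}(\mathbb{R})$ for $\bar{x}(\alpha)\ge r$ together with $\bar{x}(\alpha_r(t))+\bar{u}(\bar{x}(\alpha_r(t)))t\approx0$. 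Finally, the passage from $\bar{x}(\alpha_r(t))$ to $\alpha_r(t)$ in (i) and (ii) is immediate from $0\le\bar{\mu}(\mathbb{R})-[\alpha_r(t)-\bar{x}(\alpha_r(t))]\le\bar{\mu}(\mathbb{R})<+\infty$, exactly as in Lemma~\ref{lemma:properties}. The step I expect to be the main obstacle is, as in Lemma~\ref{lemma:properties}(ii)(a), the sharp two-sided pinching: one must squeeze $\bar{\mu}([\bar{x}(\alpha_r(t_n)),+\infty))$ between $\bar{\mu}$-values of explicit intervals $(cs^{\pm 1}t_n^{1+\theta_2},+\infty)$ and let $s\to1^+$ to pin down the exact constant, with some bookkeeping needed for the open/closed endpoints and for the $\theta_2=0$ subcase in which $\bar{u}(+\infty)$ enters the quadratic; everything else is a routine sign-adjusted copy of the proof of Lemma~\ref{lemma:properties}.
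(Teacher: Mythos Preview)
Your proposal is correct and follows essentially the same approach as the paper: the paper also states that the proof is ``essentially the same as that of Lemma~\ref{lemma:properties}'' and only spells out case (ii)(a) for $0<\theta_2<1$ via the identity \eqref{eq:pro22}, which is exactly your working identity written as $-t^{1-\theta_2}E(t)$. The one inaccurate heuristic in your sketch of part~(iii) is the phrase ``$\bar{x}(\alpha_r(t))+\bar{u}(\bar{x}(\alpha_r(t)))t\approx 0$'' (the left side actually grows like $\bar{u}(+\infty)t$ when $\bar{x}(\alpha_r(t))\to r$), but since you correctly defer that case to the Appendix~\ref{app:compact} argument this does not affect the validity of the plan.
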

\begin{proof}[Outline of Proofs]
The proof of this lemma is essentially the same as that of Lemma~\ref{lemma:properties}. Hence, to illustrate the idea, we will only provide the proof of \eqref{eq:rightlimit2} for some $A_2>0$ and $0<\theta_2<1$ here, and the rest, which can be obtained by using similar arguments as in Lemma \ref{lemma:properties}, will be left to interested readers. 

It follows from \eqref{eq:pro2} that 
\begin{align*}
\alpha_r(t)-\bar{x}(\alpha_r(t))-\bar{\mu}(\mathbb{R})=-\frac{4}{t^2}\bar{x}(\alpha_r(t))-\frac{4}{t}\bar{u}(\bar{x}(\alpha_r(t))).
\end{align*}
Suppose \eqref{eq:rightcondition2} holds for some $A_2>0$ and $0<\theta_2<1$, then by the same arguments above for $\alpha_x(t)$ (as in the proof of part (ii) (a) of Lemma~\ref{lemma:properties}), we conclude that there exist positive constants $a$ and $b$ such that $\liminf_{t\to +\infty}\frac{4}{t^{1+\theta_2}}\bar{x}(\alpha_r(t))=a>0$ and $\limsup_{t\to +\infty}\frac{4}{t^{1+\theta_2}}\bar{x}(\alpha_r(t))=b>0$. Taking the limit inferior and superior respectively on both side of 
\begin{align}\label{eq:pro22}
t^{1-\theta_2}\left[\alpha_r(t)-\bar{x}(\alpha_r(t))-\bar{\mu}(\mathbb{R}) \right]=-\frac{4}{t^{1+\theta_2}}\bar{x}(\alpha_r(t))-\frac{4}{t^{\theta_2}}\bar{u}(\bar{x}(\alpha_r(t))),
\end{align}
we obtain some identities similar to \eqref{eq:equationfora}, namely $a=A_2\left(\frac{a}{4}\right)^{-\frac{1-\theta_2}{1+\theta_2}}$ and $b=A_2\left(\frac{b}{4}\right)^{-\frac{1-\theta_2}{1+\theta_2}}$, which further imply that $a=b= 2^{1-\theta_2}{A_2}^{\frac{1+\theta_2}{2}}$. As a result, $\lim_{t\to +\infty}\frac{\alpha_r(t)}{t^{1+\theta_2}} = \lim_{t\to +\infty}\frac{\bar{x}(\alpha_r(t))}{t^{1+\theta_2}} =  \left(\frac{A_2}{4}\right)^{\frac{1+\theta_2}{2}}$.

\end{proof}

Applying the estimates for $\alpha_x(t)$ and $\alpha_r(t)$ stated in Lemma \ref{lemma:properties} and Lemma \ref{lemma:propertiesright}, we can prove Theorem \ref{thm:mainthm2} under the assumption \eqref{eq:leftcondition1} and \eqref{eq:rightcondition1} as follows.
\begin{proof}[Proof of Theorem \ref{thm:mainthm2}]
In this proof we will only consider the case $t \to +\infty$, because the case $t \to -\infty$ can be obtained immediately  by considering\footnote{It is worth noting that if $(u,\mu)$ is a conservative solution (in the sense of Definition~\ref{def:weak}), then so is $(\tilde{u}, \tilde{\mu})$.} $\left(\tilde{u}(\cdot,t), \tilde{\mu}(t) \right)$ $=\left(-u(\cdot,-t), \mu(-t)\right)$.

\begin{enumerate}[(i)]
	\item 
	To prove ${\eqref{eq:Linfinitycontrolleft}}_1$ and ${\eqref{eq:Linfinitycontrolleft}}_2$, we only need to show
	\begin{equation}\label{eq:estimatesforleftdifference}
		\alpha_l(t)-\bar{x}(\alpha_l(t)) = O\left(\frac{1}{t^{1-\theta_1}}\right),
	\end{equation} 
	because of \eqref{eq:estiatleft} and \eqref{eq:termI1} respectively.
%	Then by examining the proof of \eqref{eq:Linfty} and \eqref{eq:L2} in the proof of Theorem \ref{thm:mainthm1},
%	we easily find that \eqref{eq:Linfinitycontrolleft} holds. 
	To prove \eqref{eq:estimatesforleftdifference}, we recall that $\alpha_l(t)=\alpha_x(t)$ for $x=0$, and $\frac{1}{t^{1+\theta_1}}\bar{x}(\alpha_l(t))$  is  uniformly bounded in $t$, which is a direct consequence of part (i) of Lemma \ref{lemma:properties}. Now, it follows from \eqref{eq:pro11}  that \eqref{eq:estimatesforleftdifference} holds.
	
	\item To prove ${\eqref{eq:Linfinitycontrolright}}_1$ and ${\eqref{eq:Linfinitycontrolright}}_2$, it is worth noting that 
	by \eqref{eq:estiatright} and \eqref{eq:termI3} respectively,
	it remains to show that under assumption \eqref{eq:rightcondition1}, 
we have
	\begin{equation}\label{eq:estimates for rightdifference}
		\alpha_r(t)-\bar{x}(\alpha_r(t)) -  \bar{\mu}(\mathbb{R}) = O\left(\frac{1}{t^{1-\theta_2}}\right),
	\end{equation} 
	whose proof follows from the same argument as in the proof of part (i) by using part (i) of Lemma~\ref{lemma:propertiesright}.

	\item Now we prove \eqref{eq:Linfinitycontrolmiddle}. We will only consider the case for $0<\theta_i<1$, $i=1,2$. For the other cases, the proof is highly similar. First of all, it is worth noting that  \eqref{eq:estimatesforleftdifference} and \eqref{eq:estimates for rightdifference} hold with $\theta_1$ and $\theta_2$  replaced by $\theta:=\max\{\theta_1,\theta_2\}$, and hence, \eqref{eq:Linfinitycontrolleft} and \eqref{eq:Linfinitycontrolright} also hold with $\theta_1$ and $\theta_2$ replaced by $\theta$. To prove \eqref{eq:Linfinitycontrolmiddle}, we are left to show $\left\|u(x,t)-\frac{t}{2} v\left(\frac{4x}{t^2}\right)\right\|_{L^\infty((0, \frac{t^2}{4}\bar{\mu}(\mathbb{R})))} = O\left(|t|^{\theta}\right)$ and 
	$\left\|u(x,t)-\frac{t}{2} v\left(\frac{4x}{t^2}\right)\right\|_{L^2((0, \frac{t^2}{4}\bar{\mu}(\mathbb{R})))} = O\left(|t|^{\frac{\theta-1}{2}}\right)$. To do so, we will have to consider the generalized characteristics corresponding to all the $t>1$ and $\alpha\in\mathbb{R}$ so that $0\leq y(\alpha,t)\leq \frac{t^2}{4}\bar{\mu}(\mathbb{R})$. It follows from \eqref{eq:leftlimit1} and \eqref{eq:rightlimit1} that 
	\[
	- \left(\frac{A_1}{4}\right)^{\frac{1+\theta_1}{2}}\leq \liminf_{t\to+\infty}\frac{\bar{x}(\alpha_l(t))}{t^{1+\theta}}\leq 0\leq  \limsup_{t\to+\infty}\frac{\bar{x}(\alpha_r(t))}{t^{1+\theta}}\leq \left(\frac{A_2}{4}\right)^{\frac{1+\theta_2}{2}},
	\]
	provided that $\theta:=\max\{\theta_1,\theta_2\}$. Then for any $t>1$ and $\alpha$ satisfying $0\leq y(\alpha,t)\leq \frac{t^2}{4}\bar{\mu}(\mathbb{R})$, we have $\alpha_l(t)\leq\alpha\leq \alpha_r(t)$, and hence,
	\[
	- \left(\frac{A_1}{4}\right)^{\frac{1+\theta_1}{2}}\leq \liminf_{t\to+\infty}\frac{\bar{x}(\alpha)}{t^{1+\theta}}\leq \limsup_{t\to+\infty}\frac{\bar{x}(\alpha)}{t^{1+\theta}}\leq \left(\frac{A_2}{4}\right)^{\frac{1+\theta_2}{2}}.
	\]
	As a result, one can conclude from \eqref{eq:difference} that 
	\[
	\begin{aligned}
		&\qquad \left\|u(x,t)-\frac{t}{2} v\left(\frac{4x}{t^2}\right)\right\|_{L^\infty((0, \frac{t^2}{4}\bar{\mu}(\mathbb{R})))} \\
		&= \sup\left\{\left|-\frac{2}{t}\bar{x}(\alpha)-\bar{u}(\bar{x}(\alpha))\right| \; :\; 0\leq y(\alpha,t)\leq \frac{t^2}{4}\bar{\mu}(\mathbb{R}) \right\} = O\left(|t|^{\theta}\right),
	\end{aligned}
	\]
	and hence, ${\eqref{eq:Linfinitycontrolmiddle}}_1$ follows directly from the above  estimate, as well as $\eqref{eq:Linfinitycontrolleft}_1$ and $\eqref{eq:Linfinitycontrolright}_1$.
	
	To show ${\eqref{eq:Linfinitycontrolmiddle}}_2$, due to $\eqref{eq:Linfinitycontrolleft}_2$ and $\eqref{eq:Linfinitycontrolright}_2$, it suffices to show $\left\|u(x,t)-\frac{t}{2} v\left(\frac{4x}{t^2}\right)\right\|_{L^2((0, \frac{t^2}{4}\bar{\mu}(\mathbb{R})))} = O\left(|t|^{\frac{\theta-1}{2}}\right)$ as follows. It follows from \eqref{eq: term I2} that 
	\begin{equation*}
		\begin{aligned}
			I_2 %&\leq\alpha_r(t)-\bar{x}(\alpha_r(t))-[\alpha_l(t)-\bar{x}(\alpha_l(t))]+\bar{\mu}(\mathbb{R})-\frac{4}{t}u\left(\frac{t^2}{4}\bar{\mu}(\mathbb{R}),t\right)+\frac{4}{t}u(0,t)\\
			&\leq [\alpha_r(t)-\bar{x}(\alpha_r(t))- \bar{\mu}(\mathbb{R})] - [\alpha_l(t)-\bar{x}(\alpha_l(t))] 
			+ 2\left[ \bar{\mu}(\mathbb{R})-\frac{2}{t}u\left(\frac{t^2}{4}\bar{\mu}(\mathbb{R}),t\right)\right]   +  \frac{4}{t}u(0,t)\\
			&= [\alpha_r(t)-\bar{x}(\alpha_r(t))- \bar{\mu}(\mathbb{R})] - [\alpha_l(t)-\bar{x}(\alpha_l(t))] 
			\\
			&\quad+ 2\left[ \bar{\mu}(\mathbb{R})-\frac{2}{t}\left(\bar{u}(\bar{x}(\alpha_r(t))  + \frac{t}{2}(\alpha_r(t) -\bar{x}(\alpha_r(t))) \right)\right]  +  \frac{4}{t}\left[\bar{u}(\bar{x}(\alpha_l(t))  + \frac{t}{2}(\alpha_l(t) -\bar{x}(\alpha_l(t))) \right]\\
			&= -[\alpha_r(t)-\bar{x}(\alpha_r(t))- \bar{\mu}(\mathbb{R})] + [\alpha_l(t)-\bar{x}(\alpha_l(t))]  -\frac{4}{t}\bar{u}(\bar{x}(\alpha_r(t))) + \frac{4}{t}\bar{u}(\bar{x}(\alpha_l(t))),%= O\left(t^{\theta -1}\right),
		\end{aligned}
	\end{equation*} 
	where we applied \eqref{eq:measuresolutionu} and \eqref{eq:y(alpha_0,t)=0_and_y(alpha_1,t)=t^2barmu/4} in the second last equality. 
	Then it follows from \eqref{eq:estimatesforleftdifference} and \eqref{eq:estimates for rightdifference} that 
	\begin{equation*}
		\begin{aligned}
			\left\|u_x(x,t)-\partial_x\left[\frac{t}{2} v\left(\frac{4x}{t^2}\right)\right]\right\|_{L^2((0, \frac{t^2}{4}\bar{\mu}(\mathbb{R})))} = \sqrt{I_2} =  O\left({|t|}^{\frac{\theta-1}{2}}\right).
		\end{aligned}
	\end{equation*}

	Finally, the relation \eqref{eq:singularpartdeacyrate2} follows immediately from ${\eqref{eq:Linfinitycontrolmiddle}}_2$ and the fact that
		$\|u_x(\cdot, t)\|_{L^2}^2  + {\mu}_{s}(t)(\mathbb{R})=\mu(t)(\mathbb{R}) \equiv \bar{\mu}(\mathbb{R})=\left\|\partial_x\left[\frac{t}{2} v\left(\frac{4x}{t^2}\right)\right]\right\|_{L^2}^2$ hold for all $t\in \mathbb{R}$. 
	
	\item If $\bar{\mu}$ has compact support, then both \eqref{eq:leftcondition1} and \eqref{eq:rightcondition1} hold for $\theta_1=\theta_2=0$. Hence, all the estimates in \eqref{eq:Linfinitycontrolleft}-\eqref{eq:singularpartdeacyrate2} hold with $\theta_1$=$\theta_2=0$.
\end{enumerate}
\end{proof}
\begin{remark}\label{rem:exampleforcompactsupp} 
In this remark, we provide an explicit example to illustrate that the estimates \eqref{eq:Linfinitycontrolleft}-\eqref{eq:Linfinitycontrolmiddle} in Theorem \ref{thm:mainthm2} are  optimal when $\theta=\theta_1=\theta_2=0$, where $\theta = \max\{\theta_1, \theta_2\}$. Indeed, one can also prove that the estimates \eqref{eq:Linfinitycontrolleft}-\eqref{eq:Linfinitycontrolmiddle} in Theorem \ref{thm:mainthm2}   are optimal for any $0\le \theta = \max\{\theta_1, \theta_2\}<1$; see Remark \ref{rmk:L2R2} below for a sketch of proof.
We consider the initial data $(\bar{u},\bar{\mu}):=(\bar{u},\bar{u}_x^2 \di x)$, where $\bar{u}$ is given by
\begin{equation*}
\bar{u}(x)=\left\{
\begin{aligned}
&0,\quad x\leq0,\\
&-x,\quad 0<x< 1,\\
&-1,\quad x\geq 1.
\end{aligned}
\right.
\end{equation*}
The measure $\bar{u}_x^2 \di x$ is of compact support, so it satisfies the conditions \eqref{eq:leftcondition1} and \eqref{eq:rightcondition1} with $\theta_1=\theta_2=0$. 
The unique conservative solution $(u,\mu)$, where $\mu:= u_x^2 \di x + \mu_s$, subject to this initial data, in the sense of Definition~\ref{def:weak}, is 
\begin{equation*}
	u(x,t)=\left\{
	\begin{aligned}
		&0,\quad x\leq0,\\
		&-\frac{2x}{2-t},\quad 0<x< \left(1-\frac{t}{2}\right)^2,\\
		&-1+\frac{t}{2},\quad x\geq  \left(1-\frac{t}{2}\right)^2,
	\end{aligned}
	\right. \quad \mbox{and}\quad
	\mu_s(t)=\left\{
	\begin{aligned}
		&\delta_0,\quad t = 2,\\
		&0,\quad \mbox{otherwise},
	\end{aligned}
	\right.
\end{equation*}
and hence, for any $t>1$,
\begin{equation*}
u(x,t)-\frac{t}{2} v\left(\frac{4x}{t^2}\right)=\left\{
\begin{split}
&0, \quad x\leq 0,\\
&\frac{4x}{t(t-2)},\quad 0< x< \left(1-\frac{t}{2}\right)^2,\\
&-1+\frac{t}{2}-\frac{2}{t}x,\quad \left(1-\frac{t}{2}\right)^2 \leq x < \frac{t^2}{4},\\
&-1, \quad x \geq \frac{t^2}{4}.
\end{split}
\right.
\end{equation*}
Now, a direct calculation yields that for all $t>1$,
\begin{align*}
\left\|u(x,t)-\frac{t}{2} v\left(\frac{4x}{t^2}\right)\right\|_{L^{\infty}(\mathbb{R})}= 1,
\end{align*}
and for all $t>2$,
\begin{align*}
\left\|u_x(x,t)-\partial_x\left[\frac{t}{2} v\left(\frac{4x}{t^2}\right)\right]\right\|^2_{L^2(\mathbb{R})} = \frac{4}{t^2} + \frac{4}{t^2}(t-1)  = \frac{4}{t}.
\end{align*}
This actually verifies that the estimates in \eqref{eq:Linfinitycontrolmiddle} are indeed optimal for $\theta=0$, as $t\to +\infty$. One can also use a similar direct checking to show that this example also attains the optimal rates in \eqref{eq:Linfinitycontrolmiddle} as $t\to-\infty$.
\end{remark}
\begin{remark}\label{rmk:L2R2}
It is worth noting that in Theorem \ref{thm:mainthm2} we only assume the upper bounds of decay rates of $\bar{\mu}$, namely condition \eqref{eq:leftcondition1} or \eqref{eq:rightcondition1}. If the corresponding limits of the tail of $\bar{\mu}$ actually exist (i.e., $\bar{\mu}$ satisfies \eqref{eq:leftcondition2} or \eqref{eq:rightcondition2}), then one can make use of Lemma \ref{lemma:properties} and Lemma \ref{lemma:propertiesright} to show that the estimates \eqref{eq:Linfinitycontrolleft}-\eqref{eq:Linfinitycontrolmiddle} in Theorem \ref{thm:mainthm2} are actually attainable, namely these estimates are optimal. To illustrate the idea, let us consider the following example: if  $\bar{\mu}$ satisfies \eqref{eq:leftcondition2} for some $0<\theta_1 < 1$ and $A_1>0$, then we actually have
\begin{equation}\label{eq:Linfinitycontrolleft1}
\begin{aligned}
&\lim_{t\to +\infty}\frac{\left\|u(x,t)-\frac{t}{2} v\left(\frac{4x}{t^2}\right)\right\|_{L^\infty((-\infty,0))}}{t^{\theta_1}}= 2 \left(\frac{A_1}{4}\right)^{\frac{1+\theta_1}{2}}>0,\\
&\limsup_{t\to +\infty}\frac{\left\|u_x(x,t)-\partial_x\left[\frac{t}{2} v\left(\frac{4x}{t^2}\right)\right]\right\|_{L^2((-\infty,0))} }{{t}^{\frac{\theta_1-1}{2}}}=  2 \left(\frac{A_1}{4}\right)^{\frac{1+\theta_1}{4}}>0.
\end{aligned}
\end{equation}
%\tcr{According to \eqref{eq:termI1}, I believe (3.30)$_2$ should be changed into 
%\[
%\lim_{t\to +\infty}\frac{\left\|u_x(x,t)-\partial_x\left[\frac{t}{2} v\left(\frac{4x}{t^2}\right)\right]\right\|^2_{L^2((-\infty,0))} }{{t}^{\theta_1-1}}+\frac{\mu_s(t)((-\infty,0))}{t^{\theta_1-1}}=  4 \left(\frac{A_1}{4}\right)^{\frac{1+\theta_1}{2}}.
%\]
%If we want to prove (3.30)$_2$, we need to find some explicit example.
%}

Let us sketch the proof of $\eqref{eq:Linfinitycontrolleft1}_1$ as follows. It follows from the monotonicity of $y(\cdot,t)$ and definition~\eqref{eq:alpha01} of $\alpha_l(t)$ that
\[
0\leq \alpha-\bar{x}(\alpha)\leq \alpha_l(t)-\bar{x}(\alpha_l(t))
\]
for all $\alpha$ satisfying $y(\alpha,t)<0$. It follows from \eqref{eq:difference} that for any $t>0$,
\begin{equation*}
\frac{u(x,t)-\frac{t}{2} v\left(\frac{4x}{t^2}\right)}{t^{\theta_1}}=\frac{\bar{u}(\bar{x}(\alpha))}{t^{\theta_1}}+\frac{t^{1-\theta_1}}{2}[\alpha-\bar{x}(\alpha)],\quad\mbox{for all } x=y(\alpha,t)<0.
\end{equation*}
Since $u(x,t)-\frac{t}{2} v\left(\frac{4x}{t^2}\right)$ is continuous, we can obtain the following lower bound by using the continuity and evaluating the above identity at $x=0$, namely when $\alpha=\alpha_l(t)$:
\begin{equation}\label{eq:eq1}
\frac{\left\|u(x,t)-\frac{t}{2} v\left(\frac{4x}{t^2}\right)\right\|_{L^\infty((-\infty,0))}}{t^{\theta_1}}\geq \frac{|u(0,t)-\frac{t}{2} v\left(0\right)|}{t^{\theta_1}} = \left| \frac{\bar{u}(\bar{x}(\alpha_l(t)))}{t^{\theta_1}}+\frac{t^{1-\theta_1}}{2}[\alpha_l(t)-\bar{x}(\alpha_l(t))] \right|.
\end{equation}
On the other hand, it follows from \eqref{eq:estiatleft} that
\begin{equation}\label{eq:eq2}
\frac{\left\|u(x,t)-\frac{t}{2} v\left(\frac{4x}{t^2}\right)\right\|_{L^\infty((-\infty,0))}}{t^{\theta_1}}\leq \frac{\|\bar{u}\|_{L^\infty}}{t^{\theta_1}}+\frac{t^{1-\theta_1}}{2}[\alpha_l(t)-\bar{x}(\alpha_l(t))].
\end{equation}
Evaluating \eqref{eq:pro11} at $x=0$, and then passing to the limit as $t\to+\infty$, we have, after using \eqref{eq:leftlimit2},
\begin{equation}\label{eq:eq3}
\lim_{t\to+\infty} t^{1-\theta_1}[\alpha_l(t)-\bar{x}(\alpha_l(t))]=4\left(\frac{A_1}{4}\right)^{\frac{1+\theta_1}{2}},
\end{equation}
since $\bar{u}$ is bounded and $\alpha_l \equiv \alpha_0$.
Therefore, passing to the limit in \eqref{eq:eq1} and  \eqref{eq:eq2} as $t\to +\infty$, as well as using the limit \eqref{eq:eq3} and the fact that $\bar{u}$ is bounded, we obtain  ${\eqref{eq:Linfinitycontrolleft1}}_1$. 

To prove ${\eqref{eq:Linfinitycontrolleft1}}_2$, let us first recall the following important fact, which was shown in \cite[Theorem~2.1]{gao2021regularity}: there are at most countably many time $t$ so that the singular measure $\mu_{s}(t)\not\equiv 0$, and $\mathcal{L}(({B^L_t})^c)=0$ except at these countably many times. It is worth noting that if $\mathcal{L}(({B^L_t})^c)=0$, then the inequality in \eqref{eq:termI1} will become an equality. Therefore, combining \eqref{eq:termI1} and \eqref{eq:eq3} yields ${\eqref{eq:Linfinitycontrolleft1}}_2$. Let us also remark that for any conservative solution satisfying $\mu_{s}(t)\equiv 0$ for all large $t$\footnote{Using the characterization of regularity structure shown in \cite{gao2021regularity}, one can prove that there are many conservative solutions satisfying this property. For an explicit example, see Example~\ref{ex:nosingular} for instance.}, the limit superior in ${\eqref{eq:Linfinitycontrolleft1}}_2$ is indeed a limit, since \eqref{eq:termI1} is an equality for all large $t$.

Regarding the optimality of other cases of the estimates \eqref{eq:Linfinitycontrolleft}-\eqref{eq:Linfinitycontrolmiddle} in Theorem \ref{thm:mainthm2}, under the limits \eqref{eq:leftcondition2} and \eqref{eq:rightcondition2} with different parameters $\theta_i$ and $A_i$ for $i=1$, $2$, one can also obtain some other different estimates similar to \eqref{eq:Linfinitycontrolleft1}. Since the proofs are very similar, we leave the details to interested readers. 

%\
%
%\
%
%\textbf{Construction of an example for \eqref{eq:singularpartdeacyrate2}:} Let the interval $M_n$
\end{remark}
\begin{remark}\label{rem:singularpart}
	We conjecture that the convergence rate \eqref{eq:singularpartdeacyrate2} for the singular part $\mu_{s}(t)$ can be improved in general. For example, when the initial energy measure $\bar{\mu}$ has compact support, then we have, as $t\to\pm\infty$,
	\begin{align}\label{eq:fasterdecayforsingularmeasure}
		{\mu}_{s}(t)(\mathbb{R})=o\left({|t|}^{-2}\right).
	\end{align}
%	To see this, one may note that the set $\left\{x:\bar{u}_x(x) =-\frac{2}{t},t\neq 0\right\}$ $\subset $ $\{x:\bar{u}_x(x) \neq 0\}$ and $m=\mathcal{L}(\{x:\bar{u}_x(x) \neq 0\})$ is finite due to $\bar{u}$ has compact support. 
Let us outline the proof of \eqref{eq:fasterdecayforsingularmeasure} as follows. First of all, let us recall from \cite[Theorem 1.1(iii)]{gao2021regularity} that there are at most countably many times $\{t_n\}_{n=1}^\infty$ such that $\mu_{s}(t_n)\not\equiv 0$. 
%
%If the sequence $\{t_n\}_{n=1}^\infty$ is bounded, then we have $\mu_s(t)(\mathbb{R})=0$ for all sufficiently large $|t|$  indeed, and hence, \eqref{eq:fasterdecayforsingularmeasure} holds. 
%On the other hand, when the sequence $\{t_n\}_{n=1}^\infty$ is unbounded, we only need to 
%
Therefore, it suffices to show that for any $\epsilon>0$, there exists a constant $T_\epsilon>0$ such that if $|t_n|\geq T_\epsilon$, then $0\leq t_n^2\mu_s(t_n)(\mathbb{R})\leq\epsilon$. Seeking for a contradiction, we assume that there exist a constant $M>0$ and a subsequence $|t_{n_k}|\to +\infty$ such that $t_n^2\mu_s(t_{n_k})(\mathbb{R})\geq M$. It follows from \cite[Remark 2.3(iii)]{gao2021regularity} that $\mu_s(t)(\mathbb{R}) = \frac{4}{t^2}\mathcal{L}\left(\left\{x:\bar{u}_x(x)=-\frac{2}{t} \right\}\right)$, so $\mathcal{L}\left\{x:\bar{u}_x(x)=-\frac{2}{t_{n_k}} \right\}=\frac{1}{4}t_{n_k}^2\mu_s(t_{n_k})(\mathbb{R})\geq \frac{1}{4} M$. Since all of the sets $\left\{x:\bar{u}_x(x)=-\frac{2}{t_{n_k}} \right\}$'s are mutually disjoint and $\bigcup_{k=1}^{\infty}\left\{x:\bar{u}_x(x)=-\frac{2}{t_{n_k}} \right\}\subset \left\{x:\bar{u}_x(x)\neq 0\right\}$, we have $\mathcal{L}\left(\left\{x:\bar{u}_x(x)\neq 0\right\}\right)=+\infty$, which contradicts the assumption $\bar{\mu}$ has compact support.  This proves \eqref{eq:fasterdecayforsingularmeasure}.
%However, we are not able to improve \eqref{eq:Linfinitycontrolmiddle} even when the initial energy measure $\bar{\mu}$ has compact support; see the explicit example in Remark \ref{rem:exampleforcompactsupp} for instance. 
To obtain the optimal decay rates for the singular part $\mu_{s}(t)(\mathbb{R})$, one may have to carefully analyze the set $\left\{x:\bar{u}_x(x)=-\frac{2}{t} \right\}$, and we will not study this issue here.
\end{remark}
Next, we will consider the pointwise convergence for $u$, and more precisely, prove Theorem \ref{thm:mainthm3}. 
As we will see, the pointwise large time behavior of $u(x,t)$ is determined by the behavior of $\bar{u}$ and $\bar{\mu}$  around the negative infinity only. 

\begin{proof}[Proof of Theorem~\ref{thm:mainthm3}]
	%\tcb{Basically, to prove Theorem~\ref{thm:mainthm3}, we only need to combine \eqref{eq:measuresolutionu}  and \eqref{eq:pro11}, then use Lemma \ref{lemma:properties} to find the corresponding limits.} 
The pointwise limits stated in Theorem~\ref{thm:mainthm3} can be directly verified by using the following argument: for each fixed $x\in\mathbb{R}$, we aim at first dividing \eqref{eq:measuresolutionu} by an appropriate power of $t$, and then passing to the limit as $t\to +\infty$. In order to compute the limit corresponding to the last term in \eqref{eq:measuresolutionu}, we can make use of the identity \eqref{eq:pro11} and Lemma~\ref{lemma:properties}. Using the continuity and boundedness of $\bar{u}$ as well, we immediately obtain the results.
To illustrate the idea, we will prove part~(i), namely \eqref{eq:uxtlimit1}, below, and part~(ii)-(iv) in Theorem~\ref{thm:mainthm3}, which can be obtained by using Lemma~\ref{lemma:properties} in a similar manner, will be left to interested readers.
 Moreover, for the case that $\bar{\mu}$ satisfies \eqref{eq:leftcondition3}, the proof of \eqref{eq:limtingforfixedx} can be found in Appendix \ref{app:compact}.

Now let us prove part (i) as follows. Let $A_1>0$ and $0<\theta_1<1$. It follows from \eqref{eq:measuresolutionu} that
\begin{align}\label{eq:limitalpha2}
\frac{u(x,t)}{t^{\theta_1}}=\frac{\bar{u}(\bar{x}(\alpha_x(t)))}{t^{\theta_1}}+\frac{t^{1-\theta_1}}{2}[\alpha_x(t)-\bar{x}(\alpha_x(t))].
\end{align}
Passing to the limit in \eqref{eq:limitalpha2} as $t \to +\infty$, one finds that in order to  to obtain \eqref{eq:uxtlimit1}, we only need to show 
\begin{equation}\label{eq:limt^1-theta_1/2(alpha_xt-barxalpha_xt)}
	\lim_{t\to +\infty}\frac{t^{1-\theta_1}}{2}[\alpha_x(t)-\bar{x}(\alpha_x(t))] =2\left(\frac{A_1}{4}\right)^{\frac{1+\theta_1}{2}},
\end{equation}
since $\bar{u}$ is bounded. Passing to the limit in \eqref{eq:pro11} as $t \to +\infty$, and using \eqref{eq:leftlimit2}, we immediately obtain \eqref{eq:limt^1-theta_1/2(alpha_xt-barxalpha_xt)} since both $\bar{u}$ and $x$ are bounded.
\end{proof}

\begin{remark}
Combining \eqref{eq:pro11} and Lemma~\ref{lemma:properties}, we can obtain the decay properties of $t^{1-\theta_1}[\alpha_x(t)-\bar{x}(\alpha_x(t))]$ for all $0\leq \theta_1<1$.
Similarly, combining \eqref{eq:pro22} and Lemma~\ref{lemma:propertiesright}, we can also obtain the decay properties of $t^{1-\theta_2}[\alpha_r(t)-\bar{x}(\alpha_r(t))-\bar{\mu}(\mathbb{R})]$ for all $0\leq \theta_2<1$. In general, for any $\bar{\mu}$ that does not satisfy \eqref{eq:leftcondition3}, all the characteristics will asymptotically travel to the right with positive acceleration (as $t\to\pm\infty$), so the energy will be eventually distributed at the right hand side. Hence, the energy inside the spatial region $(-\infty,0)$ will basically decay. More precisely, it follows from \eqref{eq:L2} (as well as \eqref{eq:Linfinitycontrolmiddle} and \eqref{eq:singularpartdeacyrate2} if applicable) that almost all the energy will be eventually trapped in the region $0\leq x \leq \frac{t^2}{4}\bar{\mu}(\mathbb{R})$, which is the support of the support of $\partial_x\left[\frac{t}{2} v\left(\frac{4x}{t^2}\right)\right]$, as $t\to\pm\infty$.
\end{remark}

\begin{remark}[Pointwise limit of $u(x,t)$ for compactly supported $\bar{\mu}$]\label{rem:Discussion_of_Figure1}
%{\color{red} (To Hao and Yu: What case(s) are we discussing here? More precisely, I guess that the discussion below should somewhat related to Theorem~\ref{thm:mainthm3}, but my question is which limit(s)/result(s) we are discussing here.)}

In some cases, such as the case (a) of Figure~\ref{fig:chara}, in which Example~\ref{exm:k} with $k=1$ and $\ell=4$ is depicted, the pointwise limit can directly be visualized. Since $u(x,t)=u(y(\alpha,t),t)=y_t(\alpha,t)$, to find the value of $\lim_{t \to +\infty}u(x,t)$, we can first fix a particular position $x\in\mathbb{R}$, and then travel along the positive time direction, namely moving upward. This vertical line will keep intersecting different (generalized) characteristics $y(\alpha,t)$, and the value of $u(x,t)$ is indeed the slope of the corresponding (generalized) characteristics at $(x,t)$. In the case (a) of figure~\ref{fig:chara}, since $\bar{u}(-\infty)=1$, one can see that the vertical line will eventually intersect the straight lines with slope $1$ for all sufficiently large times, and hence, we read from the figure that $\lim_{t \to +\infty}u(x,t)=1$.

However, some pointwise limits, such as the case (b) of Figure~\ref{fig:chara}, in which Example~\ref{exm:k} with $k=-1$ and $\ell=4$ is depicted, may not be that directly visualized. For any fixed position $x\in\mathbb{R}$, as $t\to+\infty$, it is very hard to read the limit of $u(x,t)$ from the figure, by using the method of reading the slope of $y(\alpha,t)$. This implies that when $\bar{u}(-\infty)<0$, the limit \eqref{eq:limtingforfixedx} is somewhat nontrivial.
This example also shows that when the initial energy measure $\bar{\mu}$ has compact support, the sign of $\bar{u}(-\infty)$ (if it exists) plays an important role in determining the large time pointwise limit of conservative solutions.
\end{remark}
\begin{figure}[H]
\begin{center}
\includegraphics[width=1.0\textwidth]{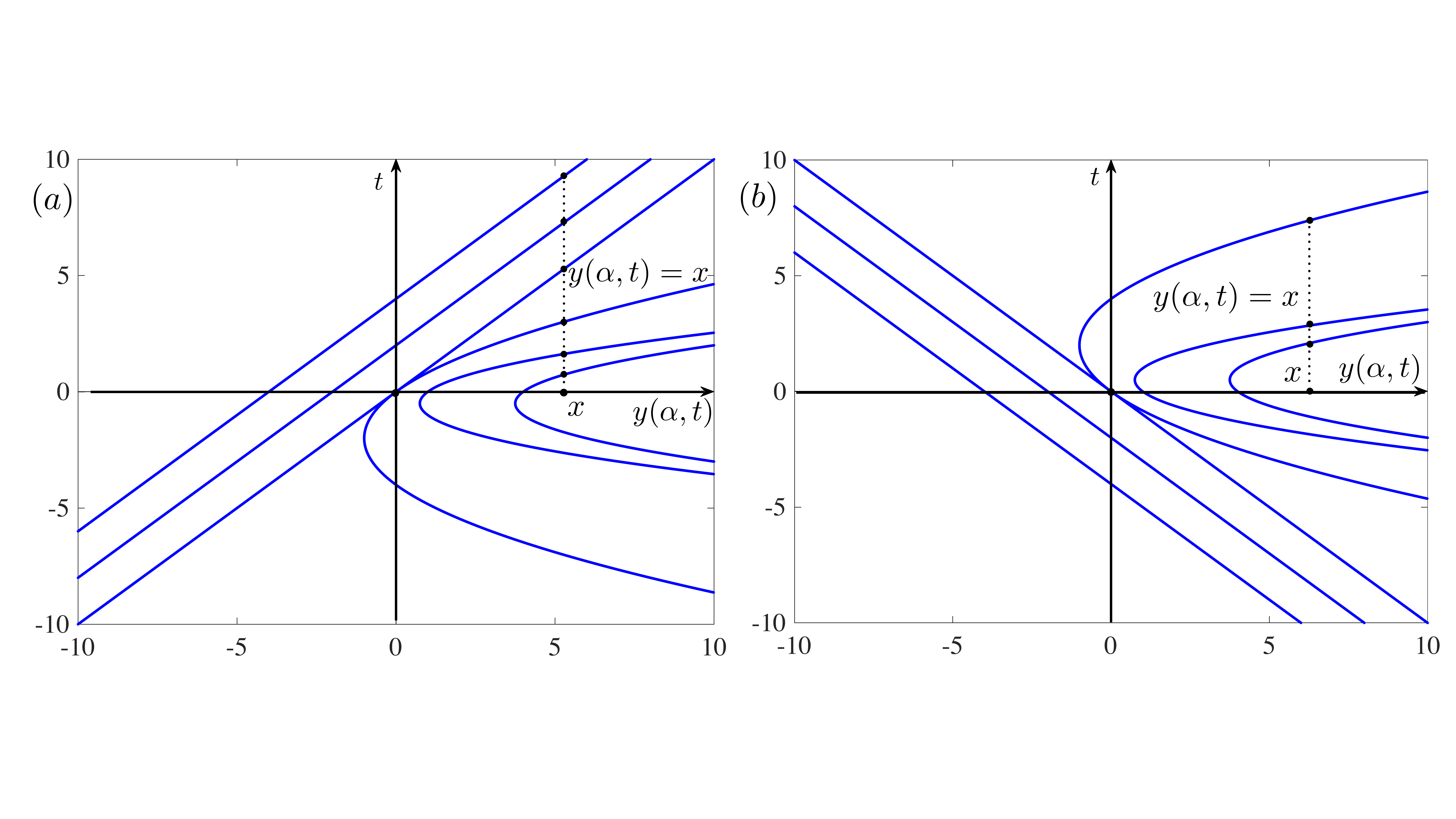}
\end{center}
\caption{Characteristics $y(\alpha,t)$: (a) The six blue lines are the characteristics $y(\alpha,t)$ in Example~\ref{exm:k} with $k=1$ and $\ell=4$, when $\alpha=-4$, $-2$, $0$, $1$, $5$, and $8$.  (b) The six blue lines are the characteristics $y(\alpha,t)$ in Example~\ref{exm:k} with $k=-1$ and $\ell=4$, when $\alpha=-4$, $-2$, $0$, $1$, $5$, and $8$. 
}
\label{fig:chara}
\end{figure}

\section{Asymptotic behavior for another form of Hunter-Saxton equation}\label{sec: another form}
In this section, we consider the generalized framework 
for the Hunter-Saxton equation in the form of \eqref{eq:HSanother}, and study the large time asymptotic behavior of its (energy) conservative solutions. The generalized framework for \eqref{eq:HSanother} is in the following form:
\begin{align}
&u_t+uu_x=\frac{1}{4}\left(\int_{-\infty}^x-\int^{+\infty}_x\right)\di \mu(t),\label{eq:gHS21}\\
&\mu_t+(u\mu)_x=0,\label{eq:gHS22}\\
&\di \mu_{ac}(t)=u_x^2(\cdot,t)\di x.\label{eq:gHS23}
\end{align} 
Similar to system \eqref{eq:gHS1}-\eqref{eq:gHS3}, the conservative solutions to \eqref{eq:gHS21}-\eqref{eq:gHS23} will also be solved subject to initial data in the space $\mathcal{D}$, which is defined in Definition~\ref{def:D}. The definition of conservative solutions to system \eqref{eq:gHS21}-\eqref{eq:gHS23} is almost the same as that of system \eqref{eq:gHS1}-\eqref{eq:gHS3}, namely Definition~\ref{def:weak}, so we will not provide the precise definition of conservative solutions to system \eqref{eq:gHS21}-\eqref{eq:gHS23} here. Instead, we will discuss the corresponding generalized characteristics and their applications as follows. 

For any initial data $(\bar{u},\bar{\mu})\in \mathcal{D}$, we can first define $\bar{x}(\alpha)$ via \eqref{eq:barx1}. Then one can directly verify\footnote{For the formal derivation of Formula~\eqref{eq:generalizedchara1}, see Appendix \ref{app:D} for instance.}
%\footnote{Apply a similar argument as in \cite{gao2021regularity}.} 
that the explicit formula of the global characteristics for \eqref{eq:gHS21}-\eqref{eq:gHS23} is given by
\begin{align}\label{eq:generalizedchara1} 
y(\alpha,t):=x(\beta(t),t)=\bar{x}(\alpha)+\bar{u}(\bar{x}(\alpha))t+\frac{t^2}{4}\left[\alpha-\bar{x}(\alpha)-\frac{1}{2}\bar{\mu}(\mathbb{R})\right].
\end{align}
Moreover, the global-in-time (energy) conservative solution $(u,\mu)$ can be expressed explicitly as follows: 
\begin{equation}\label{eq:measuresolutionu1}
\begin{aligned}
u(x,t)&:=\bar{u}(\bar{x}(\alpha))+\frac{t}{2}\left[\alpha-\bar{x}(\alpha)-\frac{1}{2}\bar{\mu}(\mathbb{R})\right], ~\textrm{for}~  x =y(\alpha,t),\\
\mu(t)&:=y(\cdot,t)\# (f\di \alpha), 
\end{aligned}
\end{equation}
where $f(\alpha) := 1-\bar{x}'(\alpha)$. 
Following the same argument as in \cite{gao2021regularity}, one can show that $(u,\mu)$ given in \eqref{eq:measuresolutionu1} is the unique global-in-time conservative solution to the generalized framework \eqref{eq:gHS21}-\eqref{eq:gHS23} subject to the initial data $(\bar{u},\bar{\mu})$. Using \eqref{eq:generalizedchara1} and \eqref{eq:measuresolutionu1}, one can also prove the following rescaled limit:
\begin{theorem}\label{thm:limitfunction2}
Let $(u,\mu)$ be the conservative solution to the generalized framework~\eqref{eq:gHS21}-\eqref{eq:gHS23} of the Hunter-Saxton equation \eqref{eq:HSanother} subject to an initial data $(\bar{u},\bar{\mu})\in\mathcal{D}$.
Then
\begin{equation}\label{eq:v2}
v_1(x) := \lim\limits_{t\to\pm\infty} \frac{2}{t}{u\left(\frac{t^2}{4}x,t\right)}= 
\left\{
\begin{aligned}
&-\frac{1}{2}\bar{\mu}(\mathbb{R}), \quad x< -\frac{1}{2}\bar{\mu}(\mathbb{R}),\\
&x, \quad -\frac{1}{2}\bar{\mu}(\mathbb{R})\le x\le \frac{1}{2}\bar{\mu}(\mathbb{R}),\\
&\frac{1}{2}\bar{\mu}(\mathbb{R}), \quad x > \frac{1}{2}\bar{\mu}(\mathbb{R}).
\end{aligned}
\right.
\end{equation}
\end{theorem}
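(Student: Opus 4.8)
The plan is to mimic the proof of Theorem~\ref{thm:mainthm1}, in particular Lemma~\ref{lem:important}, replacing the characteristic and solution formulae \eqref{eq:xbarbeta}--\eqref{eq:measuresolutionu} by their analogues \eqref{eq:generalizedchara1}--\eqref{eq:measuresolutionu1}. Fix $x\in\mathbb{R}$ and $t\neq0$, and let $\alpha(x,t)$ be the pseudo inverse, defined as in Definition~\ref{def:pseudo} but with $y(\alpha,t)$ now given by \eqref{eq:generalizedchara1}, so that $\frac{t^2}{4}x=y(\alpha(x,t),t)$. By \eqref{eq:measuresolutionu1},
\[
\frac{2}{t}\,u\!\left(\frac{t^2}{4}x,t\right)=\frac{2}{t}\,\bar u\big(\bar x(\alpha(x,t))\big)+\Big[\alpha(x,t)-\bar x(\alpha(x,t))-\tfrac12\bar\mu(\mathbb{R})\Big],
\]
and since $\bar u$ is bounded (part~(i) of Definition~\ref{def:D}), the first term is $o(1)$ as $t\to\pm\infty$, so that
\[
v_1(x)=\lim_{t\to\pm\infty}\Big[\alpha(x,t)-\bar x(\alpha(x,t))-\tfrac12\bar\mu(\mathbb{R})\Big].
\]
Expanding $\frac{t^2}{4}x=y(\alpha(x,t),t)$ via \eqref{eq:generalizedchara1} yields the key relation (the analogue of \eqref{eq:charater relation})
\[
\alpha(x,t)-\bar x(\alpha(x,t))-\tfrac12\bar\mu(\mathbb{R})=x-\frac{4}{t^2}\bar x(\alpha(x,t))-\frac{4}{t}\bar u\big(\bar x(\alpha(x,t))\big).
\]
Since \eqref{eq:barx1} is the same definition of $\bar x$ as before, $\alpha-\bar x(\alpha)$ is a continuous nondecreasing function of $\alpha$ valued in $[0,\bar\mu(\mathbb{R})]$; hence the quantity whose limit is $v_1(x)$ always lies in $[-\tfrac12\bar\mu(\mathbb{R}),\tfrac12\bar\mu(\mathbb{R})]$, in agreement with \eqref{eq:v2}.

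After first disposing of the trivial case $\bar\mu(\mathbb{R})=0$ (then $\bar x(\alpha)\equiv\alpha$, $\bar u\equiv C$, and \eqref{eq:generalizedchara1}--\eqref{eq:measuresolutionu1} give $u\equiv C$, hence $v_1\equiv0$), I would split the argument into the same four regimes as in Lemma~\ref{lem:important}. For $x<-\tfrac12\bar\mu(\mathbb{R})$: show $\alpha(x,t)\to-\infty$ by contradiction, for if $\alpha(x,t_n)\ge-M$ along some $t_n\to\pm\infty$ then \eqref{eq:barx1} gives $\bar x(\alpha(x,t_n))\ge-M-\bar\mu(\mathbb{R})$, and taking $\limsup$ in the key relation together with boundedness of $\bar u$ forces $-\tfrac12\bar\mu(\mathbb{R})\le x$, a contradiction; then $\alpha(x,t)\to-\infty$ implies $\bar x(\alpha(x,t))\to-\infty$ (the difference is bounded), so $\bar\mu((-\infty,\bar x(\alpha(x,t))])\to0$, and the sandwich $0\le\alpha-\bar x(\alpha)\le\bar\mu((-\infty,\bar x(\alpha(x,t))])$ gives $\alpha-\bar x(\alpha)\to0$, i.e.\ $v_1(x)=-\tfrac12\bar\mu(\mathbb{R})$. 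The regime $x>\tfrac12\bar\mu(\mathbb{R})$ is the mirror image: $\alpha(x,t)\to+\infty$, hence $\bar x(\alpha(x,t))\to+\infty$, hence $\alpha-\bar x(\alpha)\to\bar\mu(\mathbb{R})$, and $v_1(x)=\tfrac12\bar\mu(\mathbb{R})$. For $-\tfrac12\bar\mu(\mathbb{R})<x<\tfrac12\bar\mu(\mathbb{R})$: show $\alpha(x,t)$ is bounded in $t$, since if $\alpha(x,t_n)\to-\infty$ then $\alpha-\bar x(\alpha)\to0$ and, using $-\tfrac{4}{t_n^2}\bar x(\alpha(x,t_n))\ge0$ eventually, passing to the limit in the key relation yields $-\tfrac12\bar\mu(\mathbb{R})\ge x$, a contradiction, and symmetrically $\alpha(x,t_n)\to+\infty$ contradicts $x<\tfrac12\bar\mu(\mathbb{R})$; boundedness of $\alpha(x,t)$ (and hence of $\bar x(\alpha(x,t))$) then makes $\tfrac{4}{t^2}\bar x(\alpha)$ and $\tfrac{4}{t}\bar u(\bar x(\alpha))$ vanish, so $v_1(x)=x$.

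The main obstacle, exactly as in Lemma~\ref{lem:important}, is the two endpoints $x=\pm\tfrac12\bar\mu(\mathbb{R})$, where $\alpha(x,t)$ need neither stay bounded nor diverge to $\pm\infty$, so the squeeze arguments above do not apply directly. There I would reproduce the two-step contradiction of Case~4 of the proof of Lemma~\ref{lem:important}: at $x=\tfrac12\bar\mu(\mathbb{R})$, first rule out $\alpha(x,t_n)\to-\infty$ along any sequence (the key relation would otherwise give $\bar\mu(\mathbb{R})\le x=\tfrac12\bar\mu(\mathbb{R})$, impossible since $\bar\mu(\mathbb{R})>0$ in the nontrivial case), so that $\limsup_{t\to\pm\infty}\bar x(\alpha(x,t))<+\infty$ and, by \eqref{eq:barx1}, also $\limsup_{t\to\pm\infty}[\alpha(x,t)-\bar x(\alpha(x,t))]<+\infty$; then, assuming for contradiction that $\alpha(x,t_n)-\bar x(\alpha(x,t_n))-\tfrac12\bar\mu(\mathbb{R})\to k$ along some $t_n\to\pm\infty$ with $k<\tfrac12\bar\mu(\mathbb{R})$, use \eqref{eq:barx1} once more to confine $\bar x(\alpha(x,t_n))$ to a bounded set, and let $n\to\infty$ in the key relation to obtain $k=\tfrac12\bar\mu(\mathbb{R})$, a contradiction. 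The endpoint $x=-\tfrac12\bar\mu(\mathbb{R})$ is handled symmetrically. (Alternatively, since the three pieces in \eqref{eq:v2} already match at the junctions, it suffices to know the limit exists at the endpoints, which is exactly what this Case~4 type argument provides.) Assembling the four regimes gives \eqref{eq:v2} for $t\to+\infty$; the $t\to-\infty$ case follows at once from the symmetry $(u(\cdot,t),\mu(t))\mapsto(-u(\cdot,-t),\mu(-t))$, which preserves the generalized framework \eqref{eq:gHS21}--\eqref{eq:gHS23}.
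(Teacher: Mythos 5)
Your proposal is correct and is precisely the adaptation of Lemma~\ref{lem:important} that the paper itself prescribes: the paper omits the proof of Theorem~\ref{thm:limitfunction2}, stating only that it is ``basically the same as the proof of \eqref{eq:v}'', and your four-regime argument built on the new key relation $\alpha-\bar x(\alpha)-\tfrac12\bar\mu(\mathbb{R})=x-\frac{4}{t^2}\bar x(\alpha)-\frac{4}{t}\bar u(\bar x(\alpha))$ is exactly that. The only blemish is a mislabeled bound at the endpoint $x=\tfrac12\bar\mu(\mathbb{R})$: ruling out $\alpha(x,t_n)\to-\infty$ yields $\liminf_{t\to\pm\infty}\bar x(\alpha(x,t))>-\infty$ rather than $\limsup<+\infty$, but your subsequent two-step contradiction still confines $\bar x(\alpha(x,t_n))$ to a bounded set exactly as needed, so this does not affect the argument.
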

\noindent
The proof of Theorem~\ref{thm:limitfunction2}, which is basically the same as the proof of \eqref{eq:v} given in Section \ref{sec:Asymptotic behavior}, will be left to interested readers.  

Asymptotic expansions similar to Theorem \ref{thm:mainthm1} and error estimates (in the $L^\infty(\mathbb{R})$ and $\dot{H}^1(\mathbb{R})$-norms) similar to Theorem \ref{thm:mainthm2} will also hold, if we use this new $v_1$ to define the leading order term $\frac{t}{2} v_1\left(\frac{4x}{t^2}\right)$. However, as previously mentioned in Section~\ref{sec:intro}, the pointwise limit $\lim\limits_{t\to+\infty} u(x,t) $ is very different; for instance, see Theorem~\ref{mainthm:pointwiselimit2}, which will be shown in this section. The key observation is that in this case for each fixed $x$, the value of $\alpha_x(t)$, which is chosen so that $y(\alpha_x(t),t)=x$ where $y(\alpha,t)$ is in the form of \eqref{eq:generalizedchara1}, 
is bounded in $t$. The main difference is that for the estimates in $L^{\infty}({\mathbb{R}})$ and $\dot{H}^1(\mathbb{R})$-norms, all the characteristics will contribute, no matter whether we consider the Hunter-Saxton equation in the form of \eqref{eq:HS} or \eqref{eq:HSanother}. As a result, the error estimates %around $-\infty$, $+\infty$ and the middle 
similar to \eqref{eq:Linfinitycontrolleft}, \eqref{eq:Linfinitycontrolright} and \eqref{eq:Linfinitycontrolmiddle} should also hold, except that we have to separate the whole real line into another three different regions: $(-\infty, -\frac{t^2}{8}\bar{\mu}(\mathbb{R}))$, $(-\frac{t^2}{8}\bar{\mu}(\mathbb{R}), \frac{t^2}{8}\bar{\mu}(\mathbb{R}))$ and $(\frac{t^2}{8}\bar{\mu}(\mathbb{R}), +\infty)$. On the other hand, for the large time pointwise behavior, the specific form of the   Hunter-Saxton equation does matter. Heuristically, this is due to the fact that for any fixed $x$, the $\alpha_x(t)$, which is chosen so that $y(\alpha_x(t),t)=x$, will converge to the point that makes the characteristics have zero acceleration as $t \to \pm \infty$, namely the point that makes the coefficient of $t^2$ in $y(\alpha,t)$  become 0. 
%\tcb{In other words, for the Hunter-Saxton equation~\eqref{eq:HS}, it follows from the explicit formula \eqref{eq:xbarbeta} for $y(\alpha,t)$ that $\lim_{t \to \pm\infty} \alpha_x(t)=-\infty$ (at least when the support of  $\bar{\mu}$ is not compact); on the other hand, for the Hunter-Saxton equation~\eqref{eq:HSanother}, it follows from the explicit formula \eqref{eq:generalizedchara1} for $y(\alpha,t)$ that $\lim_{t \to \pm\infty} \alpha_x(t) = \alpha^*$, for some $\alpha^*\in\mathbb{R}$ so that $\alpha^* -\bar{x}(\alpha^*)-\frac{1}{2}\bar{\mu}(\mathbb{R})=0$.} {\color{red} (To Hao and Yu: I suggest to replace the green part below by the blue part above.)}
%{\color{green}
Then it follows the form of $y(\alpha,t)$ in \eqref{eq:xbarbeta} and  \eqref{eq:generalizedchara1} respectively, we have $\lim_{t \to \pm\infty} \alpha_x(t)=-\infty$ (at least when the support of  $\bar{\mu}$ is not compact) and $\lim_{t \to \pm\infty} \alpha_x(t) = \alpha^*$ respectively, where $\alpha^* -\bar{x}(\alpha^*)-\frac{1}{2}\bar{\mu}(\mathbb{R})=0$ (for simplicity, we assume such $\alpha^*$ is unique here), so the pointwise behavior really depends on the specific form of the Hunter-Saxton equation.%}
%that makes  \eqref{eq:limitalphax} and \eqref{eq:relationnew} hold

\begin{proof}[Proof of Theorem~\ref{mainthm:pointwiselimit2}]
 For the special case $\bar{\mu}(\mathbb{R})=0$, one can verify the limit~\eqref{eq:v2} via a direct computation, so we omit it. In the following, without loss of generality, we assume that $\bar{\mu}(\mathbb{R})>0$.
It follows from \eqref{eq:generalizedchara1} and \eqref{eq:measuresolutionu1} that
\begin{align*}
u(x,t)=\bar{u}(\bar{x}(\alpha))+\frac{t}{2}\left[\alpha-\bar{x}(\alpha)-\frac{1}{2}\bar{\mu}(\mathbb{R})\right]~\textrm{for}~ x = \bar{x}(\alpha)+\bar{u}(\bar{x}(\alpha))t+\frac{t^2}{4}\left[\alpha-\bar{x}(\alpha)-\frac{1}{2}\bar{\mu}(\mathbb{R})\right],
\end{align*}
where $\alpha = \alpha_x(t)$, which is defined in \eqref{eq:alphaxt}, depends on both $x$ and $t$. 
We claim that for any fixed $x$, $\alpha_x(t)$ is bounded in $t$. 
Seeking for a contradiction, we assume that there exists a sequence $t_n\to +\infty$ such that $|\alpha_x(t_n)| \to +\infty$ as $n\to +\infty$. Without loss of generality, let us further assume that $\alpha_x(t_n) \to +\infty$ as $n\to +\infty$. First of all, it follows from \eqref{eq:alphaxt} and \eqref{eq:generalizedchara1} that for any $x$ and $t$, we have %$\alpha=\alpha_x(t)$ satisfies the following identity
\begin{align}\label{eq:relationnewalpha_x(t)}
	\frac{2}{t}x = \frac{2}{t}\bar{x}(\alpha_x(t)) + 2\bar{u}(\bar{x}(\alpha_x(t))) +\frac{t}{2}\left[\alpha_x(t)-\bar{x}(\alpha_x(t))-\frac{1}{2}\bar{\mu}(\mathbb{R})\right].
\end{align}
Evaluating at $t=t_n$ in \eqref{eq:relationnewalpha_x(t)}, and then passing to the limit as $n\to +\infty$, we have
\begin{align*}
	0 &= \lim_{n\to \infty}\frac{2}{t_n}\bar{x}(\alpha_x(t_n)) + 2\bar{u}(\bar{x}(\alpha_x(t_n))) +\frac{t_n}{2}\left[\alpha_x(t_n)-\bar{x}(\alpha_x(t_n))-\frac{1}{2}\bar{\mu}(\mathbb{R})\right]\\
	&\geq \lim_{n\to +\infty} \frac{t_n}{2}\cdot \frac{1}{4} \bar{\mu}(\mathbb{R}) - 2\|\bar{u}\|_{L^{\infty}} = +\infty,
\end{align*}
which is a contradiction. In the last inequality, we applied the facts that $\bar{x}(\alpha_x(t_n))\to +\infty$ and $\alpha_x(t_n)-\bar{x}(\alpha_x(t_n)) \geq \bar{\mu}((-\infty,\alpha_x(t_n))) \to \bar{\mu}(\mathbb{R})$, as $n\to +\infty$.

We have just shown that for any fixed $x$,  $\alpha_x(t)$ must be bounded in $t$. Using \eqref{eq:relationnewalpha_x(t)} again, we have
\begin{equation}\label{eq:alpha_x(t)-xbaralpha_x(t)}
\begin{aligned}
\alpha_x(t)-\bar{x}(\alpha_x(t))-\frac{1}{2}\bar{\mu}(\mathbb{R}) = \frac{4}{t^2}x - \frac{4}{t^2}\bar{x}(\alpha_x(t)) - \frac{4}{t}\bar{u}(\bar{x}(\alpha_x(t))).
\end{aligned}
\end{equation}
Since $\alpha_x(t)$ is bounded and $0\leq \alpha_x(t)-\bar{x}(\alpha_x(t))\leq\bar{\mu}(\mathbb{R})$,  we also have the boundedness of $\bar{x}(\alpha_x(t))$. Hence, passing to the limit in \eqref{eq:alpha_x(t)-xbaralpha_x(t)} as $t \to +\infty$, we conclude that for any fixed $x$,
\begin{align}\label{eq:lim}
\lim_{t\to +\infty}[\alpha_x(t)-\bar{x}(\alpha_x(t))]=\frac{1}{2}\bar{\mu}(\mathbb{R}),
\end{align}
since $\bar{u}$ is also a bounded function. Substituting \eqref{eq:relationnewalpha_x(t)} into $\eqref{eq:measuresolutionu1}_1$, we obtain, for any $x$ and $t$,
\begin{align}\label{eq:expressionforunewalpha_x(t)}
u(x,t)= - \bar{u}(\bar{x}(\alpha_x(t)))+\frac{2}{t}x- \frac{2}{t}\bar{x}(\alpha_x(t)).
\end{align}
It is worth noting that because of \eqref{eq:expressionforunewalpha_x(t)}, to show \eqref{eq:pointwizsev2}, it suffices to verify the following limit:
\begin{equation}\label{eq:target}
\lim_{t\to+\infty}\bar{u}(\bar{x}(\alpha_x(t)))=\bar{u}(\bar{x}(\alpha^*))
\end{equation}
for any $\alpha^*$ satisfying $\alpha^*-\bar{x}(\alpha^*)=\frac{1}{2}\bar{\mu}(\mathbb{R})$. To this end, we define
\[
\alpha_1^*=\inf\left\{\alpha:~~\alpha-\bar{x}(\alpha)=\frac{1}{2}\bar{\mu}(\mathbb{R})\right\},\quad\mbox{and}\quad \alpha_2^*=\sup\left\{\alpha:~~\alpha-\bar{x}(\alpha)=\frac{1}{2}\bar{\mu}(\mathbb{R})\right\}.
\]
If $\alpha^*_1=\alpha^*_2$, due to the non-decreasing property of $\alpha-\bar{x}(\alpha)$ in $\alpha$ and \eqref{eq:lim} we must have 
$
\lim_{t\to+\infty}\alpha_x(t)=\alpha^*_1=\alpha^*_2,
$
and hence, \eqref{eq:target} holds. For the case $\alpha^*_1<\alpha^*_2$, since $\alpha_1^*-\alpha_2^*=\bar{x}(\alpha^*_1)-\bar{x}(\alpha^*_2)$, we have $\bar{x}(\alpha^*_1)<\bar{x}(\alpha^*_2)$. Evaluating $\alpha$ at $\alpha^*_1$ and $\alpha^*_2$ in \eqref{eq:barx1}, we obtain, for any $i=1$, $2$, 
\[
\bar{\mu}((-\infty,\bar{x}(\alpha_i^*)))\leq\alpha_i^*-\bar{x}(\alpha_i^*)\leq \bar{\mu}((-\infty,\bar{x}(\alpha_i^*)]),
\]
so 
\[
0=[\alpha_2^*-\bar{x}(\alpha_2^*)]-[\alpha_1^*-\bar{x}(\alpha_1^*)]\geq\bar{\mu}((\bar{x}(\alpha_1^*),\bar{x}(\alpha_2^*)))\geq\int_{\bar{x}(\alpha^*_1)}^{\bar{x}(\alpha^*_2)}\bar{u}_x^2(x)\di x \geq 0,
\]
which implies that $\bar{u}_x(x)=0$ for  all $x\in[\bar{x}(\alpha^*_1),\bar{x}(\alpha^*_2)]$.  Using the monotonicity of $\bar{x}$, we have
\begin{align}\label{eq:equiv}
\bar{u}(\bar{x}(\alpha^*))\equiv \bar{u}(\bar{x}(\alpha_1^*)),\quad\mbox{for all } \alpha^*\in[\alpha_1^*,\alpha_2^*].
\end{align}

Now, for any sequence $t_n\to+\infty$, since $\alpha_x(t_n)$ is bounded, there exists a subsequence $t_{n_k} \to+\infty$ as $k\to+\infty$ such that 
\[
\lim_{k\to+\infty}\alpha_x(t_{n_k})=\tilde{\alpha}.
\] 
It follows from \eqref{eq:lim} that $\tilde{\alpha}\in[\alpha_1,\alpha_2]$, so passing to the limit in \eqref{eq:expressionforunewalpha_x(t)} and using \eqref{eq:equiv}, we obtain
\begin{align*}
\lim_{k\to+\infty}\bar{u}(\bar{x}(\alpha_x(t_{n_k})))=\bar{u}(\bar{x}(\tilde{\alpha}))=\bar{u}(\bar{x}(\alpha_1^*)).
\end{align*}
Indeed, we have just proved that for any sequence of $\{\bar{u}(\bar{x}(\alpha_x(t_n)))\}_{n=1}^\infty$, there exists a subsequence $\{\bar{u}(\bar{x}(\alpha_x(t_{n_k})))\}_{k=1}^\infty$ converging to the same limit $\bar{u}(\bar{x}(\alpha_1^*)).$ This is sufficient to show \eqref{eq:target}. Actually, if \eqref{eq:target} does not hold, there exists a subsequence  $\{\bar{u}(\bar{x}(\alpha_x(t_n)))\}_{n=1}^\infty$ converging to some value $C\neq \bar{u}(\bar{x}(\alpha_1^*))$. For this sequence, we cannot find a subsequence converging to $\bar{u}(\bar{x}(\alpha_1^*))$, which contradicts the above conclusion. This completes the proof.

\end{proof}

\appendix
\section{Explicit examples}\label{app}
In this appendix we will provide two explicit examples that illustrate two important facts.
The first explicit example, namely Example~\ref{exm:k} below, illustrates the fact that the rescaled limit $\lim\limits_{t\to\pm\infty} \frac{2}{t}{u\left(\frac{t^2}{4}x,t\right)}$, and hence, the leading order term in the large time expansion of the conservative solution with initial data $(\bar{u},\bar{\mu})$ is completely determined by the total energy $\bar{\mu}(\mathbb{R})$ only and is independent of the profile of $\bar{u}$. This is consistent with the results shown in Theorem \ref{thm:mainthm1}. In the second example, namely Example~\ref{ex:nosingular} below, we explicitly construct an initial data $(\bar{u},\bar{\mu})$ that fulfills the conditions \eqref{eq:leftcondition2} and \eqref{eq:rightcondition2}. 
\begin{example}\label{exm:k}
Consider the generalized framework \eqref{eq:gHS1}-\eqref{eq:gHS3} subject to an initial data $(\bar{u},\bar{\mu}):=(k, \ell\delta_{x_0})$, where $k\in\mathbb{R}$, $\ell:=\bar{\mu}(\mathbb{R})>0$, and $x_0\in\mathbb{R}$ are three arbitrary constants. Then a direction computation yields
\begin{equation*}
\bar{x}(\alpha)=\left\{
\begin{split}
&\alpha,\quad \alpha<x_0,\\
&x_0,\quad x_0\leq\alpha\leq x_0+\ell,\\
&\alpha-\ell,\quad\alpha>x_0+\ell,
\end{split}
\right.\quad \quad
f(\alpha):=1-\bar{x}'(\alpha)=\left\{
\begin{split}
&0,\quad \alpha<x_0,\\
&1,\quad x_0\leq\alpha\leq x_0+\ell,\\
&0,\quad \alpha>x_0+\ell.
\end{split}
\right.
\end{equation*}
Using \eqref{eq:xbarbeta}, we have
\begin{equation*}
y(\alpha,t)  = \left\{
\begin{aligned}
&\alpha+kt, \quad \alpha<x_0,\\
&x_0+kt+\frac{t^2}{4}(\alpha-x_0), \quad x_0\le \alpha \le x_0+\ell,\\
&\alpha-\ell+kt+\frac{t^2}{4}\ell, \quad \alpha>x_0+\ell.
\end{aligned}
\right.
\end{equation*}
It follows from the definition \eqref{eq:defalpha} of $\alpha(x,t)$ that for any $(x,t)$,
\begin{equation*}
\alpha(x,t)  = \left\{
\begin{aligned}
&\frac{t^2}{4} x-kt, \quad x<\frac{4}{t^2}(x_0+kt),\\
&x+x_0-\frac{4}{t}k-\frac{4}{t^2}x_0, \quad \frac{4}{t^2}(x_0+kt)\le x \le \frac{4}{t^2}(x_0+kt)+\ell,\\
&\frac{t^2}{4}(x-\ell)-kt+\ell, \quad x> \frac{4}{t^2}(x_0+kt)+\ell,
\end{aligned}
\right.
\end{equation*}
and hence,
\begin{equation*}
\lim\limits_{t\to\pm \infty} \frac{4 \alpha(x,t)  }{t^2}= \left\{
\begin{aligned}
&x, \quad x<0,\\
&0, \quad 0\le x \le \ell,\\
&x-\ell, \quad x>\ell.
\end{aligned}
\right.
\end{equation*}
Now, using \eqref{eq:asy}, we obtain the following large time asymptotic behavior:
\begin{equation}\label{eq:example}
\lim\limits_{t\to \pm\infty} \frac{2}{t}{u\left(\frac{t^2}{4}x,t\right)}= 
\left\{
\begin{aligned}
0&, \quad x< 0,\\
x&, \quad 0\le x\le \ell,\\
\ell&, \quad x > \ell.
\end{aligned}
\right.
\end{equation}
In conclusion, for the special initial data $(\bar{u},\bar{\mu}):=(k, \ell\delta_{x_0})$, the asymptotic behavior \eqref{eq:example} of the corresponding conservative solution only depends the initial total energy measure $l:=\bar{\mu}(\mathbb{R})$, but not on $k$ and $x_0$. This is consistent with the asymptotic limit \eqref{eq:v} for $v$.
\end{example}

\begin{example}\label{ex:nosingular}
	 We construct an initial data that satisfies \eqref{eq:leftcondition2} and \eqref{eq:rightcondition2}, as well as other properties as follows. Let $a\in \left(\frac{1}{2},1\right]$ be a constant, and define
	\begin{equation*}
		\bar{u}(x) := \int_{0}^{x} \frac{\sin y}{|y|^a} \di y,\quad \mbox{for any $x\in \mathbb{R}$}.
	\end{equation*}
	Then one can directly verify that $\bar{u}$ is a bounded and continuously differentiable function; furthermore, both the limits $\bar{u}(-\infty):=\lim_{x\to-\infty}\bar{u}(x)$ and $\bar{u}(+\infty):=\lim_{x\to+\infty}\bar{u}(x)$ exist, and equal to the conditionally convergent integrals $\int_{0}^{+\infty}\frac{\sin y}{|y|^a} \di y$ and $-\int_{-\infty}^{0}\frac{\sin y}{|y|^a} \di y$ respectively.
	A direct differentiation yields $\bar{u}_x(x)=\frac{\sin x}{|x|^a}$ for all $x\in \mathbb{R}$, and we can define $\di \bar{\mu}:= \bar{u}_x^2(x) \di x = \frac{\sin^2 x }{|x|^{2a}} \di x$. Since $2a>1$, we have $\int_{-\infty}^{+\infty} \frac{\sin^2 x }{|x|^{2a}} \di x <+\infty$, namely $\bar{u}_x \in L^2(\mathbb{R})$. Therefore, $(\bar{u},\bar{\mu})\in\mathcal{D}$, which is defined in Definition~\ref{def:D}.

	Let $\theta_1:=\theta_2:=\frac{1}{a}-1\in[0,1)$. 
	Using the fact that $a>\frac{1}{2}$, one can directly verify that there exists a constant $C(a)>0$ such that
	\begin{equation*}
		\lim_{x\to +\infty}x^{1-\theta_2}{\bar{\mu}((x^{1+\theta_2},+\infty))}
		=\lim_{x\to +\infty}x^{1-\theta_2} \int_{x^{1+\theta_2}}^{+\infty}\frac{\sin^2 y}{y^{2a}} \di y =: C(a).
	\end{equation*}
	Similarly, we also have $
	\lim_{x\to -\infty}|x|^{1-\theta_1}{\bar{\mu}((-\infty, -|x|^{1+\theta_1}))} =C(a)$. In addition, one can use different values of $a$ around $-\infty$ and $+\infty$ to construct a function $\bar{u}$, so that \eqref{eq:leftcondition2} and \eqref{eq:rightcondition2} hold with different $\theta_1$ and $\theta_2$.

	Finally, let us also discuss the singular part $\mu_{s}(t)$ of the energy measure generated by this initial data. It follows from \cite[Theorem 2.1]{gao2021regularity} that the set $\{x:\bar{u}_x (x) =-\frac{2}{t}\}$ will generate singular part $\mu_{s}(t)$, and from \cite[Remark 2.3(iii)]{gao2021regularity} that $\mu_s(t)(\mathbb{R}) = \frac{4}{t^2}\mathcal{L}\left(\left\{x:\bar{u}_x(x)=-\frac{2}{t} \right\}\right)$, where $\mathcal{L}$ is the Lebesgue measure. However, the set $\left\{x:{\sin x} =-\frac{2}{t}|x|^a\right\}$ only has finite elements, and hence, is of measure $0$ for all $t\neq0$. As a result, the singular part (of the energy measure) $\mu_{s}(t)\equiv0$ for all $t\in \mathbb{R}$. 
\end{example}

\section{Proof of $\mathrm{(ii)(b)}$ of Lemma \ref{lemma:properties}}\label{app:proofiib}

%\subsection{Proof of $\mathrm{(ii)(b)}$ of Lemma \ref{lemma:properties}}
First, it is worth noting that the hypotheses stated in part (ii)(b) of Lemma~\ref{lemma:properties} allows us to apply Claim~\ref{claim:alpha_x->infty}, so \eqref{eq:negative} also holds in this case.
\begin{proof}[Proof of \eqref{eq:leftlimit3}.]
Seeking for a contradiction, we assume that the limit \eqref{eq:leftlimit3} does not hold. 
%{\color{purple} Then it follows from \eqref{eq:negative} that there exists a sequence $t_n\to+\infty$ such that 
%$
%\lim_{n\to +\infty}\frac{\bar{x}(\alpha_x(t_n))}{t_n^{1+\theta_1}}
%$ exists and is strictly negative,
%which implies, there exists a constant $c>0$ such that for all sufficiently large $n$,
%\[
%\bar{x}(\alpha_x(t_n))<-ct_n^{1+\theta_1}.
%\]
%}
%{\color{red} (To Hao and Yu: The purple part above is not that correct, because the limit $\lim_{n\to +\infty}\frac{\bar{x}(\alpha_x(t_n))}{t_n^{1+\theta_1}}$ may not exist.)} {\color{red} (To Hao and Yu: I want to replace the above purple part by the blue part below. )} 
 Then it follows from \eqref{eq:negative} that the quotient $\frac{\bar{x}(\alpha_x(t))}{t^{1+\theta_1}}$ is always negative for all sufficiently large $t$, so $\limsup_{t\to +\infty}\frac{\bar{x}(\alpha_x(t))}{t^{1+\theta_1}}\leq 0$. Hence, the invalidity of limit \eqref{eq:leftlimit3} implies that $\liminf_{t\to +\infty}\frac{\bar{x}(\alpha_x(t))}{t^{1+\theta_1}}<0$, so there exist a constant $c>0$ and a sequence $t_n\to+\infty$ such that for all sufficiently large $n$,
\[
	\bar{x}(\alpha_x(t_n))<-ct_n^{1+\theta_1}.
\]

Hence, using \eqref{eq:pro11}, the second inequality in \eqref{eq:barx1} and hypothesis \eqref{eq:leftcondition2}, we obtain
\begin{align*}
0 &< c \leq -\liminf_{n\to +\infty}\frac{4}{t_n^{1+\theta_1}}\bar{x}(\alpha_x(t_n)) \leq \limsup_{n\to +\infty}t_n^{1-\theta_1}[\alpha_x(t_n)-\bar{x}(\alpha_x(t_n))]\\
&\leq \lim_{n\to +\infty}t_n^{1-\theta_1}\bar{\mu}((-\infty, -ct_n^{1+\theta_1})) = \frac{1}{c^{1-\theta_1}}\lim_{n\to +\infty}(ct_n)^{1-\theta_1}\bar{\mu}((-\infty, -ct_n^{1+\theta_1})) = 0,
\end{align*}
which is a contradiction. Therefore, \eqref{eq:leftlimit3} holds.
\end{proof}

\begin{proof}[Proof of \eqref{eq:leftlimit4}.]
To prove  \eqref{eq:leftlimit4}, we of course  consider  two cases: 1. $\bar{u}(-\infty)\leq 0$; and 2. $\bar{u}(-\infty)>0$.

\textbf{Case 1:} {$\bar{u}(-\infty)\leq 0$. The proof of this case is highly similar to that of \eqref{eq:leftlimit3}.
Seeking for a contradiction, we assume that \eqref{eq:leftlimit4} does not hold. Then similar to the beginning of the proof of \eqref{eq:leftlimit3}, it follows from \eqref{eq:negative} that there exist a constant $c>0$ and a sequence $t_n\to+\infty$ such that  $\bar{x}(\alpha_x(t_n))<-ct_n$ for all sufficiently large $n$. Using \eqref{eq:pro11}, \eqref{eq:barx1} and hypothesis \eqref{eq:leftcondition2} with $A_1=\theta_1=0$, we have 
\begin{equation*}
	\begin{aligned}
		0 &< c \leq -\liminf_{n \to +\infty}\frac{4}{t_n}\bar{x}(\alpha_x(t_n)) - 4\bar{u}(-\infty)= \limsup_{n \to +\infty} t_n\left[\alpha_x(t_n)-\bar{x}(\alpha_x(t_n)) \right]\\
		&\leq \limsup_{n \to +\infty} t_n\bar{\mu}((-\infty,\bar{x}(\alpha_x(t_n))])\leq \limsup_{n \to +\infty} t_n\bar{\mu}((-\infty,-ct_n))= 0,
	\end{aligned}
\end{equation*}
which is a contradiction. Hence, \eqref{eq:leftlimit4} actually holds.}

\textbf{Case 2:} $\bar{u}(-\infty)>0$. 
First of all, we claim
\begin{claim}\label{claim:B1}
There exist positive constants $a_1$ and $b_1$ such that
\begin{align}\label{eq:A10theta1equal0}
\liminf_{t\to +\infty}\frac{4}{t}\bar{x}(\alpha_x(t))=-a_1<0~\textrm{ and }~\limsup_{t\to +\infty}\frac{4}{t}\bar{x}(\alpha_x(t))=-b_1<0.
\end{align}
\end{claim}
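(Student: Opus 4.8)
\textbf{Proof proposal for Claim~\ref{claim:B1}.} The plan is to read off both assertions in \eqref{eq:A10theta1equal0} from the rearranged identity \eqref{eq:pro111}, namely
\[
-\frac{4}{t}\bar{x}(\alpha_x(t))=t[\alpha_x(t)-\bar{x}(\alpha_x(t))]+4\bar{u}(\bar{x}(\alpha_x(t)))-\frac{4x}{t},
\]
using \eqref{eq:negative} (which is available here because the hypotheses of part (ii)(b) include $\bar{\mu}((-\infty,z))>0$ for all $z$, so Claim~\ref{claim:alpha_x->infty} applies), the continuity of $\bar{u}$ at $-\infty$, and the assumption $A_1=\theta_1=0$ in \eqref{eq:leftcondition2}. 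First I would obtain the $\limsup$ bound: since $\alpha_x(t)-\bar{x}(\alpha_x(t))\ge 0$ by the first inequality in \eqref{eq:barx1}, since $\bar{u}(\bar{x}(\alpha_x(t)))\to\bar{u}(-\infty)>0$ by \eqref{eq:negative}, and since $4x/t\to 0$, passing to $\liminf_{t\to+\infty}$ in the identity gives $\liminf_{t\to+\infty}\bigl(-\tfrac{4}{t}\bar{x}(\alpha_x(t))\bigr)\ge 4\bar{u}(-\infty)>0$, i.e.\ $\limsup_{t\to+\infty}\tfrac{4}{t}\bar{x}(\alpha_x(t))\le -4\bar{u}(-\infty)<0$.

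For the $\liminf$ assertion I would argue by contradiction, assuming $\liminf_{t\to+\infty}\tfrac{4}{t}\bar{x}(\alpha_x(t))=-\infty$, so that along some sequence $t_n\to+\infty$ we have, for every fixed $c>0$, $\bar{x}(\alpha_x(t_n))<-ct_n$ for all large $n$. Then, by the second inequality in \eqref{eq:barx1} and monotonicity of $\bar{\mu}$,
\[
t_n[\alpha_x(t_n)-\bar{x}(\alpha_x(t_n))]\le t_n\,\bar{\mu}\bigl((-\infty,-ct_n)\bigr)=\frac{1}{c}\,(ct_n)\,\bar{\mu}\bigl((-\infty,-ct_n)\bigr)\longrightarrow \frac{A_1}{c}=0,
\]
by \eqref{eq:leftcondition2} with $\theta_1=0$, $A_1=0$. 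Substituting this bound and the boundedness of $\bar{u}$ into the displayed identity shows the right-hand side stays bounded, while the left-hand side $-\tfrac{4}{t_n}\bar{x}(\alpha_x(t_n))\to+\infty$, a contradiction. Hence $\liminf_{t\to+\infty}\tfrac{4}{t}\bar{x}(\alpha_x(t))>-\infty$; together with the $\limsup$ bound above, both $\liminf$ and $\limsup$ are finite and strictly negative, so setting $a_1:=-\liminf_{t\to+\infty}\tfrac{4}{t}\bar{x}(\alpha_x(t))$ and $b_1:=-\limsup_{t\to+\infty}\tfrac{4}{t}\bar{x}(\alpha_x(t))$ yields $0<b_1\le a_1<\infty$, which is exactly \eqref{eq:A10theta1equal0}.

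I do not expect a serious obstacle; the argument is a direct adaptation of the proof of \eqref{eq:leftlimclaim1} in part (ii)(a). The one point that needs care is that, unlike the case $A_1>0$ treated there, here the quantity $(ct_n)\bar{\mu}((-\infty,-ct_n))$ \emph{vanishes} rather than converging to a positive constant, so the contradiction in the $\liminf$ step cannot be extracted from the tail mass alone: instead it comes from combining the nonnegativity of $t[\alpha_x(t)-\bar{x}(\alpha_x(t))]$ with the strict positivity of $\bar{u}(-\infty)$, which also forces the $\limsup$ side of the estimate. One should also record explicitly that $\bar{x}(\alpha_x(t))\to-\infty$ (so that $\bar{u}(\bar{x}(\alpha_x(t)))\to\bar{u}(-\infty)$), which is precisely \eqref{eq:negative}.
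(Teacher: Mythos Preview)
Your proof is correct and follows essentially the same route as the paper: both parts rely on the rearranged identity \eqref{eq:pro111} together with \eqref{eq:negative}, the tail condition \eqref{eq:leftcondition2} with $A_1=\theta_1=0$, and the sign of $\bar{u}(-\infty)$, and the $\liminf$ step is the same contradiction argument (the paper simply takes $c=1$). Your direct estimate for the $\limsup$ bound, obtained by dropping the nonnegative term $t[\alpha_x(t)-\bar{x}(\alpha_x(t))]$ and passing to the limit, is slightly cleaner than the paper's contradiction version and even yields the quantitative bound $\limsup_{t\to+\infty}\tfrac{4}{t}\bar{x}(\alpha_x(t))\le -4\bar{u}(-\infty)$, but the underlying idea is the same.
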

The proof of \eqref{eq:A10theta1equal0} for $\theta_1=0$ is highly similar to that of \eqref{eq:leftlimclaim1}. For completeness, we also provide it here as follows.
\begin{proof}[Proof of Claim~\ref{claim:B1}]
{
First of all, it follows from \eqref{eq:negative} that the quotient $\frac{4\bar{x}(\alpha_x(t))}{t}$ is always negative for all sufficiently large $t$, so $\liminf_{t\to +\infty}\frac{4\bar{x}(\alpha_x(t))}{t}\leq \limsup_{t\to +\infty}\frac{4\bar{x}(\alpha_x(t))}{t}\leq 0$. Therefore, to verify \eqref{eq:A10theta1equal0}, we will show that $\liminf_{t\to +\infty}\frac{4\bar{x}(\alpha_x(t))}{t^{1+\theta_1}}=-\infty$ and $\limsup_{t\to +\infty}\frac{4\bar{x}(\alpha_x(t))}{t}= 0$ are impossible.
}

{
Seeking for a contradiction, we first assume that $\liminf_{t\to +\infty}\frac{4\bar{x}(\alpha_x(t))}{t^{1+\theta_1}}=-\infty$. In particular, there exists a sequence $t_n\to+\infty$ such that  $\bar{x}(\alpha_x(t_n))<-t_n$ for all sufficiently large $n$. Hence, using \eqref{eq:barx1}, we have
\begin{equation}\label{eq:alpha_x-xbaralpha_xleqmubar}
	\alpha_x(t_n)-\bar{x}(\alpha_x(t_n))\leq \bar{\mu}((-\infty,\bar{x}(\alpha_x(t_n))])\leq \bar{\mu}((-\infty, -t_n)).
\end{equation}
}
Using \eqref{eq:pro11},  \eqref{eq:alpha_x-xbaralpha_xleqmubar} and hypothesis \eqref{eq:leftcondition2} with $A_1=\theta_1=0$, we have
\begin{equation*}
	\begin{aligned}
		+\infty &= -\liminf_{n\to +\infty}\frac{4}{t_n}\bar{x}(\alpha_x(t_n)) \leq \limsup_{n\to +\infty}t_n[\alpha_x(t_n)-\bar{x}(\alpha_x(t_n))] + 4 \|\bar{u}\|_{L^\infty}\\
		&\leq \lim_{n\to +\infty}t_n\bar{\mu}((-\infty, -t_n))+4\|\bar{u}\|_{L^\infty}= 4\|\bar{u}\|_{L^\infty},
	\end{aligned}
\end{equation*}
which is a contradiction. Therefore, $\liminf_{t\to +\infty}\frac{4\bar{x}(\alpha_x(t))}{t^{1+\theta_1}}>-\infty$.

{Seeking for a contradiction, we now assume that $\limsup_{t\to +\infty}\frac{4\bar{x}(\alpha_x(t))}{t}= 0$. Then there exists a sequence $t_n\to+\infty$ such that $\lim_{n\to +\infty}\frac{4}{t_n}\bar{x}(\alpha_x(t_n))=0$. In particular, $\bar{x}(\alpha_x(t_n))>-t_n$ for all sufficiently large $n$, so using \eqref{eq:barx1}, we have
\begin{equation}\label{eq:alpha_x-xbaralpha_xgeqmubar}
\alpha_x(t_n)-\bar{x}(\alpha_x(t_n))\geq\bar{\mu}((-\infty, \bar{x}(\alpha_x(t_n))))\geq\bar{\mu}((-\infty, -t_n)).
\end{equation}
Then it follows from  \eqref{eq:pro11}, \eqref{eq:negative}, \eqref{eq:alpha_x-xbaralpha_xgeqmubar} and hypothesis \eqref{eq:leftcondition2} with $A_1=\theta_1=0$ again that
\begin{equation*}
	\begin{aligned}
		0&=-\lim_{n\to +\infty}\frac{4}{t_n}\bar{x}(\alpha_x(t_n))=\lim_{n\to +\infty}t_n[\alpha_x(t_n)-\bar{x}(\alpha_x(t_n))] + 4\bar{u}(-\infty)\\
		&\geq \lim_{n\to +\infty}t_n\bar{\mu}((-\infty, -t_n)) +4\bar{u}(-\infty)=4\bar{u}(-\infty) >0,
	\end{aligned}
\end{equation*}
} 
which is also a contradiction. Hence, $\limsup_{t\to +\infty}\frac{4\bar{x}(\alpha_x(t))}{t}< 0$. This completes the proof of \eqref{eq:A10theta1equal0}. 
\end{proof}
In order to show the limit \eqref{eq:leftlimit4} exists, it suffices to show $a_1=b_1$. More precisely, to verify \eqref{eq:leftlimit4}, we will apply \eqref{eq:A10theta1equal0} to prove that $a_1=4 \bar{u}(-\infty)=b_1$ as follows. 
Consider any sequence $t_n\to+\infty$ satisfying 
\[
\lim_{n\to+\infty}\frac{4}{t_n}\bar{x}(\alpha_x(t_n))=-a_1.
\]
For any sufficiently large $n$, we have
\[
-\frac{a_1}{2}t_n<\bar{x}(\alpha_x(t_n))<-\frac{a_1}{8}t_n,
\]
and hence,
\[
\begin{aligned}
0&= \liminf_{n\to +\infty}t_n{\bar{\mu}\left(\left(-\infty, -\frac{a_1}{2}t_n\right)\right)} \leq \liminf_{n\to +\infty}t_n\bar{\mu}\left(\left(-\infty, \bar{x}(\alpha_x(t_n))\right)\right) \\
&\leq \limsup_{n\to +\infty}t_n\bar{\mu}\left((-\infty, \bar{x}(\alpha_x(t_n))]\right)\leq \limsup_{n\to +\infty}t_n{\bar{\mu}\left(\left(-\infty, -\frac{a_1}{8}t_n\right)\right)} = 0,
\end{aligned}
\]
where we applied \eqref{eq:leftcondition2} to both the leftmost and rightmost limits.
This implies
\begin{align*}
\lim_{n\to+\infty}t_n\bar{\mu}((-\infty,\bar{x}(\alpha_x(t_n))))=\lim_{n\to+\infty}t_n\bar{\mu}((-\infty,\bar{x}(\alpha_x(t_n))])=0.
\end{align*}
Therefore using \eqref{eq:barx1}, we have
\[
	\lim_{n\to+\infty}t_n[\alpha_x(t_n)-\bar{x}(\alpha_x(t_n))]=0,
\]
%\[
%0=\lim_{n\to+\infty}t_n\bar{\mu}((-\infty,\bar{x}(\alpha_x(t_n))))\leq \lim_{n\to+\infty}t_n[\alpha_x(t_n)-\bar{x}(\alpha_x(t_n))]\leq \lim_{n\to+\infty}t_n\bar{\mu}((-\infty,\bar{x}(\alpha_x(t_n))])=0.
%\]
so evaluating \eqref{eq:pro11} at $t=t_n$, and then passing to the limit as $n\to +\infty$, as well as using \eqref{eq:negative}, we have 
\begin{align*}
a_1-4 \bar{u}(-\infty) = -\lim_{n\to+\infty}\frac{4}{t_n}\bar{x}(\alpha_x(t_n)) -4\lim_{n\to +\infty}\bar{u}(\bar{x}(\alpha_x(t_n))
=\lim_{n\to+\infty}t_n[\alpha_x(t_n)-\bar{x}(\alpha_x(t_n))] =0.
\end{align*}
Thus, $a_1=4 \bar{u}(-\infty)$. Similarly, one can also show that $b_1 = 4\bar{u}(-\infty)$ in the same manner, by considering any $t_n\to+\infty$ satisfying 
$
\lim_{n\to+\infty}\frac{4}{t_n}\bar{x}(\alpha_x(t_n))=-b_1$.
As a result, $a_1=4 \bar{u}(-\infty)=b_1$, and hence, the limit $\lim_{t\to+\infty}\frac{\bar{x}(\alpha_x(t))}{t}$ exists and \eqref{eq:leftlimit4} holds when $\bar{u}(-\infty) >0$.

\end{proof}

\section{Properties of conservative solutions with initial energy measure $\bar{\mu}$ supported on the right half-line $[\ell,+\infty)$}\label{app:compact} 

In this section we will study the properties of conservative solutions subject to the initial energy measure $\bar{\mu}$ supported on the half-line $[\ell,+\infty)$, namely satisfying the condition \eqref{eq:leftcondition3}. As a consequence, we will also provide proofs of part $\mathrm{(iii)}$ of Lemma~\ref{lemma:properties}, and \eqref{eq:limtingforfixedx} for $\bar{\mu}$ satisfying \eqref{eq:leftcondition3}

Throughout this appendix, we will consider the unique conservative solution $(u,\mu)$ subject to the initial data $(\bar{u},\bar{\mu})\in\mathcal{D}$ that satisfies the condition \eqref{eq:leftcondition3}. Without loss of generality\footnote{For the trivial case $\bar{\mu}\equiv 0$, the initial data $\bar{u}$ must be a constant. The unique conservative solution to this special case is a constant solution, so all the properties can be directly verified by using the explicit solution.}, we assume that $\bar{\mu}\not\equiv 0$. First of all, we have the following
\begin{proposition}\label{prop:Basic_Properties_under_leftcondition3}
	Let $\bar{\mu}\not\equiv 0$ and satisfy \eqref{eq:leftcondition3}. Then we have the following statements:
	\begin{enumerate}[(i)]
		\item The given initial data $\bar{u}$ and the function $\bar{x}$ defined by \eqref{eq:barx1} satisfy
		\begin{align}\label{eq:left}
			\bar{u}(x)\equiv \bar{u}(-\infty)~\textrm{ for }~x\in (-\infty, \ell],
		\end{align}
		\begin{equation}\label{eq:leftright}
			\bar{x}(\alpha)=\alpha,\quad \mbox{for all }\alpha <\ell.
		\end{equation}
		\item The function $\alpha_x(t)$ defined by definition \eqref{eq:alphaxt} satisfies 
		\begin{align}\label{eq:leqell}
			\limsup_{t\to+\infty}\bar{x}(\alpha_x(t))\leq\ell,
		\end{align}
		\begin{align}\label{eq:baru}
			\lim_{t\to+\infty}\bar{u}(\bar{x}(\alpha_x(t)))=\bar{u}(-\infty).
		\end{align}
		\item The functions $\bar{x}$ and $\alpha_x(t)$ also satisfy the following inequalities:
		\begin{align}\label{eq:limsupxbaralpha_xt}
			\limsup_{t\to+\infty} \frac{\bar{x}(\alpha_x(t))}{t} \leq 0,
		\end{align}
		\begin{align}\label{eq:liminftalpha_xt-xbaralpha_xt}
			\liminf_{t\to+\infty} t [\alpha_x(t)-\bar{x}(\alpha_x(t))]\geq 0.
		\end{align}
	\end{enumerate}
\end{proposition}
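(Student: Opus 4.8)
The proof of Proposition~\ref{prop:Basic_Properties_under_leftcondition3} should be a fairly direct consequence of the explicit formulae \eqref{eq:barx1} and \eqref{eq:xbarbeta} together with the key identity \eqref{eq:pro11} (equivalently \eqref{eq:charater relation}), specialized to the situation where $\bar{\mu}$ is supported on $[\ell,+\infty)$. I would organize it as three short steps, treating the items (i), (ii), (iii) in order, since the later ones build on the earlier.

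\emph{Step 1 (item (i)).} The identities \eqref{eq:left} and \eqref{eq:leftright} are immediate structural consequences of $\operatorname{supp}\{\bar{\mu}\}\subset[\ell,+\infty)$. For \eqref{eq:leftright}: if $\alpha<\ell$, then $\bar{x}(\alpha)\le\alpha<\ell$, so $\bar{\mu}((-\infty,\bar{x}(\alpha)))=\bar{\mu}((-\infty,\bar{x}(\alpha)])=0$; plugging this into the defining inequalities \eqref{eq:barx1} forces $\bar{x}(\alpha)=\alpha$. For \eqref{eq:left}: since $(\bar u,\bar\mu)\in\mathcal{D}$ we have $\di\bar\mu_{ac}=\bar u_x^2\,\di x$, and $\bar\mu$ vanishing on $(-\infty,\ell)$ forces $\bar u_x=0$ a.e.\ on $(-\infty,\ell)$; as $\bar u\in C_b(\mathbb{R})$ (indeed it is at least continuous), $\bar u$ is constant on $(-\infty,\ell]$, and that constant is by definition $\bar u(-\infty)$.

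\emph{Step 2 (item (ii)).} To get \eqref{eq:leqell}, I would argue by contradiction exactly in the style of Case~1 of the proof of Lemma~\ref{lem:important}: if $\bar{x}(\alpha_x(t_n))\ge\ell+\varepsilon$ along a sequence $t_n\to+\infty$, then by \eqref{eq:barx1} and the support condition $\alpha_x(t_n)-\bar x(\alpha_x(t_n))\le\bar\mu((-\infty,\bar x(\alpha_x(t_n))])\le\bar\mu(\mathbb{R})$, while plugging $t=t_n$ into \eqref{eq:pro11} (with $\theta_1=0$) and using boundedness of $\bar u$ yields $-\frac{4}{t_n}\bar x(\alpha_x(t_n))=t_n[\alpha_x(t_n)-\bar x(\alpha_x(t_n))]+4\bar u(\cdot)-\frac{4x}{t_n}$; the left side is $\le-\frac{4(\ell+\varepsilon)}{t_n}\to0$ (if $\ell+\varepsilon>0$) or handled similarly, and comparing growth rates gives a contradiction — more cleanly, rewrite $x=y(\alpha_x(t_n),t_n)$ from \eqref{eq:xbarbeta} and note the $\frac{t_n^2}{4}(\alpha_x(t_n)-\bar x(\alpha_x(t_n)))$ term would have to stay bounded, forcing $\alpha_x(t_n)-\bar x(\alpha_x(t_n))=O(t_n^{-2})\to0$, hence $\bar x(\alpha_x(t_n))$ sits where $\bar\mu$ charges arbitrarily little mass to its left, which combined with monotonicity of $\bar x$ and $\bar\mu$ being supported near and above $\ell$ contradicts $\bar x(\alpha_x(t_n))\ge\ell+\varepsilon$. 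Then \eqref{eq:baru} follows from \eqref{eq:leqell}, \eqref{eq:left}, and continuity of $\bar u$: for $t$ large, $\bar x(\alpha_x(t))\le\ell+o(1)$, and on $(-\infty,\ell]$ (plus a shrinking neighbourhood to its right, by continuity) $\bar u\equiv\bar u(-\infty)$ up to an $o(1)$ error.

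\emph{Step 3 (item (iii)).} Inequality \eqref{eq:limsupxbaralpha_xt} is weaker than \eqref{eq:leqell} when $\ell\le0$ and otherwise follows from a direct contradiction argument of the same flavor: if $\bar x(\alpha_x(t_n))\ge\delta t_n$ for some $\delta>0$, then from \eqref{eq:xbarbeta} the term $\frac{t_n^2}{4}(\alpha_x(t_n)-\bar x(\alpha_x(t_n)))$ must compensate a term growing like $\delta t_n$, but $\alpha_x(t)-\bar x(\alpha_x(t))\ge0$ always, so $x=y(\alpha_x(t_n),t_n)\ge\delta t_n-\|\bar u\|_{L^\infty}t_n\to+\infty$ if $\delta>\|\bar u\|_{L^\infty}$ — and a sharper bookkeeping using \eqref{eq:barx1} to bound $\alpha_x(t_n)-\bar x(\alpha_x(t_n))\le\bar\mu(\mathbb{R})$ handles the remaining range. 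Finally \eqref{eq:liminftalpha_xt-xbaralpha_xt} is immediate since $\alpha-\bar x(\alpha)\ge0$ for all $\alpha$ by the first inequality in \eqref{eq:barx1}, so $t[\alpha_x(t)-\bar x(\alpha_x(t))]\ge0$ for all $t>0$.

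\textbf{Main obstacle.} The only genuinely delicate point is \eqref{eq:leqell}: controlling $\limsup\bar x(\alpha_x(t))$ from above. Unlike the non-compact-support cases in Lemma~\ref{lemma:properties}, here one cannot use a lower bound on $\bar\mu((-\infty,z))$ to drive $\alpha_x(t)-\bar x(\alpha_x(t))$ away from $0$, so the argument must instead extract the decay $\alpha_x(t)-\bar x(\alpha_x(t))=O(t^{-2})$ directly from \eqref{eq:xbarbeta} with $x$ fixed, and then translate "$\bar x(\alpha_x(t))$ lies where $\bar\mu$ to its left is almost all of $\bar\mu(\mathbb{R})$ minus $O(t^{-2})$" into "$\bar x(\alpha_x(t))\le\ell+o(1)$" using that $\operatorname{supp}\{\bar\mu\}\subset[\ell,+\infty)$ and $\bar\mu\not\equiv0$; one should be slightly careful if $\bar\mu$ has an atom exactly at $\ell$, but the $\limsup$ (rather than $\lim$) formulation absorbs this.
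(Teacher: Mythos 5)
Your proposal is correct and follows essentially the same route as the paper: item (i) read off directly from \eqref{eq:barx1} and the compatibility condition in $\mathcal{D}$, item (ii) by contradiction using the characteristic identity $x=y(\alpha_x(t),t)$ together with the fact that $\bar\mu((-\infty,z))>0$ for $z>\ell$ (the paper runs the same contradiction in the opposite direction, first lower-bounding $\alpha_x(t_n)-\bar x(\alpha_x(t_n))$ by a positive constant and then blowing up the $t_n^2$ term), and item (iii) as immediate consequences of \eqref{eq:leqell} and the first inequality in \eqref{eq:barx1}. Two harmless imprecisions: under your contradiction hypothesis the $\frac{t_n^2}{4}$-term is only $O(t_n)$ rather than bounded (because of the $\bar u\,t_n$ term), so you get $\alpha_x(t_n)-\bar x(\alpha_x(t_n))=O(t_n^{-1})$ rather than $O(t_n^{-2})$, which still suffices; and \eqref{eq:limsupxbaralpha_xt} follows from \eqref{eq:leqell} for every real $\ell$, so your separate argument for $\ell>0$ is unnecessary.
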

Before proving Proposition~\ref{prop:Basic_Properties_under_leftcondition3}, let us recall that for any fixed number $x\in\mathbb{R}$, it follows from \eqref{eq:xbarbeta} and the definition \eqref{eq:alphaxt} of $\alpha_x(t)$ that
\begin{align}\label{eq:xbarbetax}
	x=y(\alpha_x(t),t)=\bar{x}(\alpha_x(t))+\bar{u}(\bar{x}(\alpha_x(t)))t+\frac{t^2}{4}(\alpha_x(t)-\bar{x}(\alpha_x(t))),
\end{align} 
which is equivalent to (as long as $t\neq 0$)
\begin{equation}\label{eq:xbarbetax/t}
	\frac{x}{t}=\frac{\bar{x}(\alpha_x(t))}{t}+\bar{u}(\bar{x}(\alpha_x(t)))+\frac{t}{4}(\alpha_x(t)-\bar{x}(\alpha_x(t))).
\end{equation}
Furthermore, it follows from \eqref{eq:measuresolutionu} that
\begin{align}\label{eq:Formula_for_u_in_terms_of_alpha_xt}
	u(x,t)=\bar{u}(\bar{x}(\alpha_x(t)))+\frac{t}{2}(\alpha_x(t)-\bar{x}(\alpha_x(t))).
\end{align}
All of \eqref{eq:xbarbetax}, \eqref{eq:xbarbetax/t} and \eqref{eq:Formula_for_u_in_terms_of_alpha_xt} will be useful in the analysis below.
\begin{proof}[Proof of Proposition~\ref{prop:Basic_Properties_under_leftcondition3}]
	\begin{enumerate}[(i)]
		\item Since $\bar{\mu}\not\equiv 0$ and satisfies \eqref{eq:leftcondition3},  $\ell:=\inf\mathrm{supp}\{\bar{\mu}\}\in\mathbb{R}$. It follows from $(\bar{u},\bar{\mu})\in\mathcal{D}$ that $\bar{u}_x^2\di x=\di \mu_{ac}$, so $\bar{u}_x(x)=0$ for a.e. $x\in (-\infty, \ell)$, and hence, we have \eqref{eq:left}. Furthermore, \eqref{eq:leftright} follows immediately from definition~\eqref{eq:barx1} and the fact that $\bar{\mu}(-\infty,\ell)=0$.
		\item First of all, let us show \eqref{eq:leqell} by using proof by contradiction. Seeking for a contradiction, we assume that there exists a sequence $t_n\to+\infty$ such that $\lim_{n\to+\infty}\bar{x}(\alpha_x(t_n))>\ell$. Using \eqref{eq:barx1} and the definition $\ell:=\inf\mathrm{supp}\{\bar{\mu}\}$, we have
		\[
		\lim_{n\to+\infty}[\alpha_x(t_n)-\bar{x}(\alpha_x(t_n))]\geq \lim_{n\to+\infty}\bar{\mu}((-\infty,\bar{x}(\alpha_x(t_n)))>\bar{\mu}((-\infty,\ell))=0,
		\] 
		and hence, evaluating \eqref{eq:xbarbetax} at $t=t_n$ and passing to the limit as $n\to +\infty$, we know that the limit on the right hand side of \eqref{eq:xbarbetax} is $+\infty$ since $\lim_{n\to+\infty}\bar{x}(\alpha_x(t_n))>\ell$ and $\bar{u}$ is bounded. This contradicts the finiteness of $x$ on the left hand side of \eqref{eq:xbarbetax}. Therefore, \eqref{eq:leqell} holds. Finally, using the continuity of $\bar{u}$, \eqref{eq:left} and \eqref{eq:leqell}, we immediately have \eqref{eq:baru}.
		\item The inequality~\eqref{eq:limsupxbaralpha_xt} follows directly from \eqref{eq:leqell} and the fact that $\ell\in\mathbb{R}$. On the other hand, \eqref{eq:liminftalpha_xt-xbaralpha_xt} follows immediately from the first inequality in \eqref{eq:barx1}.
	\end{enumerate}
\end{proof}
In order to effectively analyze the large time asymptotic behavior in different cases, let us introduce the following useful lemma, which basically means that both inequalities \eqref{eq:limsupxbaralpha_xt} and \eqref{eq:liminftalpha_xt-xbaralpha_xt} cannot be simultaneously strict.
\begin{lemma}\label{lemma:dilemma}
	Let $\bar{\mu}\not\equiv 0$ and satisfy \eqref{eq:leftcondition3}. Then the following two statements hold:
	\begin{enumerate}[(i)]
		\item If
		$
			\limsup_{t\to+\infty} \frac{\bar{x}(\alpha_x(t))}{t} < 0,
		$
		then
		\begin{equation}\label{eq:limtalpha_xt-xbaralpha_xt=0}
			\lim_{t\to+\infty} t[\alpha_x(t)-\bar{x}(\alpha_x(t))]= 0.
		\end{equation}
		\item If
		$
			\liminf_{t\to+\infty} t[\alpha_x(t)-\bar{x}(\alpha_x(t))]> 0,
		$
		then
		\begin{equation}\label{eq:limxbaralpha_xt/t=0}
			\lim_{t\to+\infty} \frac{\bar{x}(\alpha_x(t))}{t} = 0.
		\end{equation}
	\end{enumerate}
\end{lemma}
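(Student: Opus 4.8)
\textbf{Proof proposal for Lemma~\ref{lemma:dilemma}.}

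The plan is to exploit the single identity \eqref{eq:xbarbetax/t}, which says
\[
\frac{x}{t}=\frac{\bar{x}(\alpha_x(t))}{t}+\bar{u}(\bar{x}(\alpha_x(t)))+\frac{t}{4}\bigl(\alpha_x(t)-\bar{x}(\alpha_x(t))\bigr),
\]
together with the already-established facts from Proposition~\ref{prop:Basic_Properties_under_leftcondition3}: namely \eqref{eq:baru} gives $\bar{u}(\bar{x}(\alpha_x(t)))\to\bar{u}(-\infty)\in\mathbb{R}$, \eqref{eq:limsupxbaralpha_xt} gives $\limsup_{t\to+\infty}\bar{x}(\alpha_x(t))/t\le 0$, and \eqref{eq:liminftalpha_xt-xbaralpha_xt} gives $\liminf_{t\to+\infty}t[\alpha_x(t)-\bar{x}(\alpha_x(t))]\ge 0$. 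Rearranging the identity, for $t>0$,
\[
\frac{\bar{x}(\alpha_x(t))}{t}+\frac{t}{4}\bigl(\alpha_x(t)-\bar{x}(\alpha_x(t))\bigr)=\frac{x}{t}-\bar{u}(\bar{x}(\alpha_x(t)))\ \longrightarrow\ -\bar{u}(-\infty),
\]
so the sum of the two quantities in question converges to the finite limit $-\bar{u}(-\infty)$. This is the crux: one term is asymptotically $\le 0$ and the other is asymptotically $\ge 0$, yet their sum stays bounded, which should force each to be pinned down.

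For part (i), I would argue as follows. Suppose $\limsup_{t\to+\infty}\bar{x}(\alpha_x(t))/t=:-c<0$. Pick any sequence $t_n\to+\infty$ along which $t_n[\alpha_x(t_n)-\bar{x}(\alpha_x(t_n))]$ approaches its limit superior, call it $L\in[0,+\infty]$. Along a further subsequence $\bar{x}(\alpha_x(t_n))/t_n$ converges to some limit in $[-\infty,-c]$; since the sum converges to $-\bar{u}(-\infty)$ (a finite number) and $\tfrac14 t_n[\alpha_x(t_n)-\bar{x}(\alpha_x(t_n))]\to \tfrac14 L$, we get that $\bar{x}(\alpha_x(t_n))/t_n$ converges to the finite number $-\bar{u}(-\infty)-\tfrac14 L$. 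If $L>0$, I want a contradiction. The mechanism: $\alpha_x(t)-\bar{x}(\alpha_x(t))$ lies in $[\bar{\mu}((-\infty,\bar{x}(\alpha_x(t)))),\bar{\mu}((-\infty,\bar{x}(\alpha_x(t))])]$ by \eqref{eq:barx1}, and $\bar{x}(\alpha_x(t_n))\le -\tfrac{c}{2}t_n\to-\infty$, so $\alpha_x(t_n)-\bar{x}(\alpha_x(t_n))\le\bar{\mu}((-\infty,\bar{x}(\alpha_x(t_n))])\to 0$. Hence $t_n[\alpha_x(t_n)-\bar{x}(\alpha_x(t_n))]\le t_n\,\bar{\mu}((-\infty,-\tfrac{c}{2}t_n])$. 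To conclude $L=0$ I need that $t\,\bar{\mu}((-\infty,-\tfrac{c}{2}t])\to 0$; this is exactly where I use that $\bar{\mu}$ has a left tail supported in $[\ell,+\infty)$, i.e.\ \eqref{eq:leftcondition3}: then $\bar{\mu}((-\infty,-\tfrac{c}{2}t])=0$ for all $t$ large enough, and the product is identically zero. Thus $L=0$, which is precisely \eqref{eq:limtalpha_xt-xbaralpha_xt=0} (the liminf is $\ge 0$ by \eqref{eq:liminftalpha_xt-xbaralpha_xt}, so limsup $=0$ gives the limit).

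For part (ii), suppose $\liminf_{t\to+\infty} t[\alpha_x(t)-\bar{x}(\alpha_x(t))]=:d>0$. Then $\alpha_x(t)-\bar{x}(\alpha_x(t))\ge \tfrac{d}{2t}>0$ for $t$ large, so by \eqref{eq:barx1} one has $\bar{\mu}((-\infty,\bar{x}(\alpha_x(t))])\ge \alpha_x(t)-\bar{x}(\alpha_x(t))>0$, forcing $\bar{x}(\alpha_x(t))\ge\ell$ for all large $t$ (since $\bar{\mu}$ is supported in $[\ell,+\infty)$, the measure of $(-\infty,z]$ is zero for $z<\ell$). Combined with \eqref{eq:leqell}, $\bar{x}(\alpha_x(t))$ is eventually trapped in the bounded interval $[\ell,\limsup+1]$, or more carefully $\bar{x}(\alpha_x(t))$ is bounded; hence $\bar{x}(\alpha_x(t))/t\to 0$, which is \eqref{eq:limxbaralpha_xt/t=0}. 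I should double-check the upper bound: \eqref{eq:leqell} already gives $\limsup_{t\to+\infty}\bar{x}(\alpha_x(t))\le\ell$, and combined with the lower bound $\bar{x}(\alpha_x(t))\ge\ell$ eventually, actually $\bar{x}(\alpha_x(t))\to\ell$, which is more than enough.

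The main obstacle I anticipate is purely bookkeeping in part (i): making the subsequence extraction clean when $L$ could a priori be $+\infty$, and being careful that "the sum converges" genuinely pins down both pieces rather than allowing a $+\infty$ / $-\infty$ cancellation. The resolution is that the $\limsup$ of $\bar{x}(\alpha_x(t))/t$ is $\le 0$ (not $-\infty$ a priori, but bounded above), and the other term has $\liminf\ge 0$; since their sum has a finite limit, neither can run to $\pm\infty$, so all the relevant subsequential limits are finite and the contradiction argument via the tail condition \eqref{eq:leftcondition3} goes through. Everything else is a routine application of \eqref{eq:barx1}, the boundedness of $\bar{u}$, and Proposition~\ref{prop:Basic_Properties_under_leftcondition3}.
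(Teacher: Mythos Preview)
Your proposal is correct and arrives at the same core mechanism as the paper: in part (i), the hypothesis forces $\bar{x}(\alpha_x(t))\to-\infty$, and then the support condition \eqref{eq:leftcondition3} makes $\bar{\mu}((-\infty,\bar{x}(\alpha_x(t))])=0$ for large $t$, so $\alpha_x(t)=\bar{x}(\alpha_x(t))$ eventually; in part (ii), the hypothesis forces $\bar{x}(\alpha_x(t))\ge\ell$ eventually (you use $\bar{\mu}((-\infty,\bar{x}(\alpha_x(t))])>0$, the paper equivalently shows $\alpha_x(t)\ge\ell$), and combined with \eqref{eq:leqell} this gives boundedness of $\bar{x}(\alpha_x(t))$. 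The main difference is that your initial framing around the sum identity \eqref{eq:xbarbetax/t} and the subsequence extraction in part (i) is entirely superfluous---once you observe $\bar{x}(\alpha_x(t))\le -\tfrac{c}{2}t$ for all large $t$ (which follows directly from the $\limsup$ hypothesis), the support condition immediately gives $\alpha_x(t)-\bar{x}(\alpha_x(t))=0$ for all large $t$, with no need to discuss $L$, subsequences, or the convergence of the sum; the paper proceeds this way directly.
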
 
\begin{proof}
	\begin{enumerate}[(i)]
		\item Using the second inequality in \eqref{eq:barx1} and the assumption that $\limsup_{t\to+\infty} \frac{\bar{x}(\alpha_x(t))}{t} < 0$, we have
		\[
			\limsup_{t\to+\infty} \frac{\alpha_x(t)}{t} \leq \limsup_{t\to+\infty} \frac{\bar{x}(\alpha_x(t))}{t} + \lim_{t\to+\infty} \frac{\bar{\mu}(\mathbb{R})}{t} < 0,
		\]
		and hence, there exists a constant $c>0$ such that $\limsup_{t\to+\infty} \frac{\alpha_x(t)}{t} < -c < 0$. As a result, for any sequence $t_n\to +\infty$, we have
		\[
			\alpha_x(t_n) \leq -c t_n,
		\]
		so for any sufficiently large $n$, $\alpha_x(t_n) < \ell$, which and \eqref{eq:leftright} imply that $\alpha_x(t_n)=\bar{x}(\alpha_x(t_n))$. Thus, we have $\lim_{n\to+\infty} t_n[\alpha_x(t_n)-\bar{x}(\alpha_x(t_n))]= 0$. Since the sequence $\{t_n\}_{n=1}^\infty$ is arbitrary,
		the limit~\eqref{eq:limtalpha_xt-xbaralpha_xt=0} follows immediately.
		\item First of all, we claim that there exists a $T>0$ such that
		\begin{equation}\label{eq:alpha_xt_geq_ell}
			\alpha_x(t) \geq \ell, \quad\mbox{for all }t>T.
		\end{equation}
		Seeking for a contradiction, we assume that there exists a sequence $t_n\to +\infty$ such that $\alpha_x(t_n)<\ell$ for all $n$. Using \eqref{eq:leftright}, we know that $\alpha_x(t_n)=\bar{x}(\alpha_x(t_n))$ for all $n$, and hence, $\lim_{n\to +\infty} t_n[\alpha_x(t_n)-\bar{x}(\alpha_x(t_n))] = 0$, which contradicts the assumption that $\liminf_{t\to+\infty} t[\alpha_x(t)-\bar{x}(\alpha_x(t))]> 0$.
		
		Now, it follows from \eqref{eq:alpha_xt_geq_ell} that $\liminf_{t\to+\infty} \frac{\alpha_x(t)}{t} \geq 0$. On the other hand, using \eqref{eq:barx1} and the boundedness of $\bar{\mu}$, one can verify that $\liminf_{t\to+\infty} \frac{\bar{x}(\alpha_x(t))}{t} = \liminf_{t\to+\infty} \frac{\alpha_x(t)}{t}$, and hence,
		\begin{equation}\label{eq:liminf_xbaralpha_xt/tgeq0}
			\liminf_{t\to+\infty} \frac{\bar{x}(\alpha_x(t))}{t} \geq 0.
		\end{equation}
		The limit \eqref{eq:limxbaralpha_xt/t=0} is an immediate consequence of \eqref{eq:limsupxbaralpha_xt} and \eqref{eq:liminf_xbaralpha_xt/tgeq0}.
	\end{enumerate}
\end{proof}
Now, we are ready to analyze the situation in three different cases: $\bar{u}(-\infty)>0$; $\bar{u}(-\infty)<0$; and $\bar{u}(-\infty)=0$. In particular, we will verify \eqref{eq:leftlimit4} and \eqref{eq:compact10}, as well as \eqref{eq:limtingforfixedx}.

\textbf{Case 1.} Let $\bar{u}(-\infty)>0$. Then taking the limit superior in \eqref{eq:xbarbetax/t} as $t\to +\infty$, we obtain
\begin{equation*}\label{e:}
	\begin{aligned}
		0 = \limsup_{t\to +\infty} \frac{x}{t} &= \limsup_{t\to +\infty} \left\{ \frac{\bar{x}(\alpha_x(t))}{t} + \bar{u}(\bar{x}(\alpha_x(t))) + \frac{1}{4} t [\alpha_x(t) - \bar{x}(\alpha_x(t))] \right\}\\
		&\geq \limsup_{t\to +\infty} \frac{\bar{x}(\alpha_x(t))}{t} + \bar{u}(-\infty) + \frac{1}{4} \liminf_{t\to +\infty} t [\alpha_x(t) - \bar{x}(\alpha_x(t))] > \limsup_{t\to +\infty} \frac{\bar{x}(\alpha_x(t))}{t},
	\end{aligned}
\end{equation*}
where we applied \eqref{eq:baru} and \eqref{eq:liminftalpha_xt-xbaralpha_xt} above. Therefore, it follows from part (i) of Lemma~\ref{lemma:dilemma} that the limit \eqref{eq:limtalpha_xt-xbaralpha_xt=0} holds. Now, passing to the limit in \eqref{eq:xbarbetax/t} as $t\to +\infty$ again, and using \eqref{eq:baru} and \eqref{eq:limtalpha_xt-xbaralpha_xt=0}, we finally obtain
\[
	\lim_{t\to+\infty}\frac{\bar{x}(\alpha_x(t))}{t}=-\bar{u}(-\infty),
\]
which is \eqref{eq:leftlimit4} and implies \eqref{eq:compact10}. Using \eqref{eq:barx1}, we also obtain
\[
	\lim_{t\to+\infty}\frac{\alpha_x(t)}{t}=-\bar{u}(-\infty),\quad\mbox{and}\quad \lim_{t\to+\infty}\alpha_x(t) = -\infty.
\]
Finally, passing to the limit in \eqref{eq:Formula_for_u_in_terms_of_alpha_xt} as $t\to +\infty$, and using the limits \eqref{eq:baru} and \eqref{eq:limtalpha_xt-xbaralpha_xt=0}, we also have \eqref{eq:limtingforfixedx}.

\textbf{Case 2.} Let $\bar{u}(-\infty)<0$. Then taking the limit inferior in \eqref{eq:xbarbetax/t} as $t\to +\infty$, we obtain
\begin{equation*}\label{e:}
	\begin{aligned}
		0 = \liminf_{t\to +\infty} \frac{x}{t} &= \liminf_{t\to +\infty} \left\{ \frac{\bar{x}(\alpha_x(t))}{t} + \bar{u}(\bar{x}(\alpha_x(t))) + \frac{1}{4} t [\alpha_x(t) - \bar{x}(\alpha_x(t))] \right\}\\
		&\leq \limsup_{t\to +\infty} \frac{\bar{x}(\alpha_x(t))}{t} + \bar{u}(-\infty) + \frac{1}{4} \liminf_{t\to +\infty} t [\alpha_x(t) - \bar{x}(\alpha_x(t))] < \frac{1}{4} \liminf_{t\to +\infty} t [\alpha_x(t) - \bar{x}(\alpha_x(t))],
	\end{aligned}
\end{equation*}
where we applied \eqref{eq:baru} and \eqref{eq:limsupxbaralpha_xt} above. Therefore, using part (ii) of Lemma~\ref{lemma:dilemma}, we obtain the limit \eqref{eq:limxbaralpha_xt/t=0}, which is indeed \eqref{eq:leftlimit4} in this case. Using \eqref{eq:barx1}, \eqref{eq:limxbaralpha_xt/t=0} and the boundedness of $\bar{\mu}$, we also have
\[
	\lim_{t\to+\infty}\frac{\alpha_x(t)}{t}=0.
\]

Seeking for a contradiction, we assume that $\liminf_{t\to +\infty} \bar{x}(\alpha_x(t)) < \ell$. Then there exist a constant $\delta>0$ and a sequence $t_n\to +\infty$ such that $\bar{x}(\alpha_x(t_n)) < \ell-\delta$ for all $n$. It follows from \eqref{eq:leftright} that $\bar{x}(\ell-\delta)=\ell-\delta$, so we have $\bar{x}(\alpha_x(t_n))<\bar{x}(\ell-\delta)$. Since the function $\bar{x}$ is nondecreasing, we know that $\alpha_x(t_n)\leq\ell-\delta$, which and \eqref{eq:leftright} imply $\bar{x}(\alpha_x(t_n))\equiv\alpha_x(t_n)$. Hence, $\liminf_{n\to +\infty} t_n [\alpha_x(t_n) - \bar{x}(\alpha_x(t_n))]=0$, which contradicts $\liminf_{t\to +\infty} t [\alpha_x(t) - \bar{x}(\alpha_x(t))]>0$. As a result, we have shown $\liminf_{t\to +\infty} \bar{x}(\alpha_x(t)) \geq \ell$, which and \eqref{eq:leqell} imply \eqref{eq:compact10}. Dividing \eqref{eq:xbarbetax} by $t^2$, and then passing to the limit as $t\to +\infty$, as well as using the boundedness of $x$, $\bar{x}(\alpha_x(t))$ and $\bar{u}(\bar{x}(\alpha_x(t)))$, we also obtain
\[
	\lim_{t\to +\infty} \alpha_x(t) = \lim_{t\to +\infty} \bar{x}(\alpha_x(t)) = \ell.
\] 

Now, passing to the limit in \eqref{eq:xbarbetax/t} as $t\to +\infty$ again, and using \eqref{eq:baru} and \eqref{eq:limxbaralpha_xt/t=0}, we finally obtain
\begin{equation}\label{eq:lim_t(alpha-xbaralpha)}
	\lim_{t\to+\infty} t[\alpha_x(t)-\bar{x}(\alpha_x(t))]=-4\bar{u}(-\infty).
\end{equation}
Finally, passing to the limit in \eqref{eq:Formula_for_u_in_terms_of_alpha_xt} as $t\to +\infty$, and using the limits \eqref{eq:baru} and \eqref{eq:lim_t(alpha-xbaralpha)}, we also have \eqref{eq:limtingforfixedx}.

\textbf{Case 3.} Let $\bar{u}(-\infty)=0$. Then taking the limit superior and inferior in \eqref{eq:xbarbetax/t} as $t\to +\infty$, we obtain, after using \eqref{eq:baru},
\begin{equation*}\label{e:}
	\begin{aligned}
		0 &= \limsup_{t\to +\infty} \left\{ \frac{\bar{x}(\alpha_x(t))}{t} + \frac{1}{4} t [\alpha_x(t) - \bar{x}(\alpha_x(t))] \right\} \geq \limsup_{t\to +\infty} \frac{\bar{x}(\alpha_x(t))}{t} + \frac{1}{4} \liminf_{t\to +\infty} t [\alpha_x(t) - \bar{x}(\alpha_x(t))] \\
		&\geq \liminf_{t\to +\infty} \left\{ \frac{\bar{x}(\alpha_x(t))}{t} + \frac{1}{4} t [\alpha_x(t) - \bar{x}(\alpha_x(t))] \right\} = 0,
	\end{aligned}
\end{equation*}
which implies
\begin{equation}\label{eq:limsup+liminf=0}
	\limsup_{t\to +\infty} \frac{\bar{x}(\alpha_x(t))}{t} + \frac{1}{4} \liminf_{t\to +\infty} t [\alpha_x(t) - \bar{x}(\alpha_x(t))] = 0.
\end{equation}
Using \eqref{eq:limsupxbaralpha_xt}, \eqref{eq:liminftalpha_xt-xbaralpha_xt}, \eqref{eq:limsup+liminf=0} and Lemma~\ref{lemma:dilemma}, we can conclude that
\begin{equation}\label{eq:limsup=liminf=0}
	\limsup_{t\to +\infty} \frac{\bar{x}(\alpha_x(t))}{t} = \liminf_{t\to +\infty} t [\alpha_x(t) - \bar{x}(\alpha_x(t))] = 0.
\end{equation}
This verifies \eqref{eq:leftlimit4}. Using \eqref{eq:barx1}, \eqref{eq:limxbaralpha_xt/t=0} and the boundedness of $\bar{\mu}$, we can also conclude that
\[
	\lim_{t\to+\infty}\frac{\alpha_x(t)}{t}=0.
\]
Finally, passing to the limit in \eqref{eq:Formula_for_u_in_terms_of_alpha_xt} as $t\to +\infty$, and using the limits \eqref{eq:baru} and \eqref{eq:limsup=liminf=0}, we also verifies \eqref{eq:limtingforfixedx}.

\section{Formal derivation of \eqref{eq:generalizedchara1} for classical solutions}\label{app:D}
To motivate the generalized characteristics \eqref{eq:generalizedchara1} under the generalized framework \eqref{eq:gHS21}-\eqref{eq:gHS23}, let us begin with the case that the conservative solution $(u(x,t),\mu(t))$ is always classical, in the sense that $\di \mu(t)=u_x^2(x,t)\di x$ for all time $t\in \mathbb{R}$. In this case, the energy conservation  \eqref{eq:conservationlaw} also holds.
Following the same idea as in \cite{gao2021regularity}, we can derive \eqref{eq:generalizedchara1} as follows: for any given $\beta$, $t\in\mathbb{R}$, we define $x(\beta,t)$ by
\begin{align}\label{eq:defx}
x(\beta,t)+\int_{(-\infty, x(\beta,t))}u_y^2(y,t)\di y=\beta. 
\end{align}
Differentiating \eqref{eq:defx} with respect to $t$ and $\beta$ respectively, we obtain
\begin{equation}\label{eq:dtx&dbetax}
\partial_tx(\beta,t)=\frac{(uu_x^2)(x(\beta,t),t)}{1+u_x^2(x(\beta,t),t)},\quad\mbox{and}\quad \partial_{\beta}x(\beta,t)=\frac{1}{1+u_x^2(x(\beta,t),t)},
\end{equation}
where we have applied \eqref{eq:conservationlaw} while computing $\partial_tx(\beta,t)$.
For any fixed $\alpha\in \mathbb{R}$, denote by $\beta(t)$ the unique solution to the following initial-value problem:
\begin{align}\label{eq:betat}
\beta'(t)=u(x(\beta(t),t),t),\quad\mbox{and}\quad \beta(0)=\alpha.
\end{align}
It follows from the chain rule, \eqref{eq:dtx&dbetax} and \eqref{eq:betat} that
\begin{align}\label{eq:translation}
	\frac{\di }{\di t}x(\beta(t),t)=\partial_tx(\beta(t),t)+\partial_\beta x(\beta(t),t)\cdot \beta'(t)=u(x(\beta(t),t),t)=\beta'(t).
\end{align}
Denote by $\bar{x}(\alpha)$ the unique solution to
\begin{align}\label{eq:smooth barx} 
\bar{x}(\alpha)+\int_{(-\infty, \bar{x}(\alpha))}u_y^2(y,0)\di y=\alpha.
\end{align}
Then it follows from \eqref{eq:defx}, \eqref{eq:betat} and \eqref{eq:translation} that 
\begin{align}\label{eq:rela}
\bar{x}(\beta(0),0)=\bar{x}(\alpha),\quad\mbox{and}\quad \beta(t)-x(\beta(t),t)=\alpha-\bar{x}(\alpha).
\end{align}
{ Furthermore, differentiating \eqref{eq:betat} with respect to $t$, and then using the chain rule, \eqref{eq:translation}, \eqref{eq:betat}, \eqref{eq:HSanother} and \eqref{eq:defx}, one can verify that }
\[
\beta''(t)=\frac{1}{2}[\beta(t)-x(\beta(t),t)]-\frac{1}{4}\bar{\mu}(\mathbb{R}).
\]
Taking one more time derivative and using \eqref{eq:translation}, we finally obtain
\begin{align}\label{eq:thirdorder}
\beta'''(t)=0.
\end{align}
Moreover, we have the following initial data for the ODE \eqref{eq:thirdorder}: 
\begin{align}\label{eq:initial}
\beta(0)=\alpha,\quad \beta'(0)=u(\bar{x}(\alpha),0),\quad \beta''(0)=\frac{1}{2}(\alpha-\bar{x}(\alpha))-\frac{1}{4}\bar{\mu}(\mathbb{R}).
\end{align}
Therefore, solving \eqref{eq:thirdorder} and \eqref{eq:initial} yields
\begin{align}\label{eq:global1}
\beta(t)=\alpha+u(\bar{x}(\alpha),0)t+\frac{t^2}{4}\left[\alpha-\bar{x}(\alpha)-\frac{1}{2}\bar{\mu}(\mathbb{R})\right],
\end{align}
and hence, using \eqref{eq:rela}, we also have 
\begin{align}\label{eq:global2}
x(\beta(t),t)=\bar{x}(\alpha)+u(\bar{x}(\alpha),0)t+\frac{t^2}{4}\left[\alpha-\bar{x}(\alpha)-\frac{1}{2}\bar{\mu}(\mathbb{R})\right].
\end{align}
Therefore, to obtain the global formulae for $\beta(t)$ and $x(\beta(t),t)$, we only need the information of initial data $u(x,0)$. 
For the general initial data $(\bar{u},\bar{\mu})$ in $\mathcal{D}$, we first define $\bar{x}(\alpha)$ by \eqref{eq:barx1}, and then we can still use \eqref{eq:global2}
to define the generalized characteristics. Finally we set
$y(\alpha,t)=x(\beta(t),t)$, this is exactly \eqref{eq:generalizedchara1}. Moreover, one can directly check that \eqref{eq:measuresolutionu1} indeed provides the global-in-time conservative solution to \eqref{eq:gHS21}-\eqref{eq:gHS23} subject to the initial data $(\bar{u},\bar{\mu})\in \mathcal{D}$.

\

\noindent\textbf{Acknowledgements} Y. Gao is supported by  the National Natural Science Foundation grant 12101521 of China  and  the Start-up fund from The Hong Kong Polytechnic University with project number P0036186. 
T. K. Wong is partially supported by the HKU Seed Fund for Basic Research under the project code 201702159009, the Start-up Allowance for Croucher Award Recipients, and Hong Kong General Research Fund (GRF) grants with project number 17306420, 17302521, and 17315322.

\bibliographystyle{plain}
\bibliography{bibofChara}

\begin{thebibliography}{10}

\bibitem{bressan2005global}
A.~Bressan and A.~Constantin.
\newblock Global solutions of the {H}unter--{S}axton equation.
\newblock {\em SIAM J. Math. Anal.}, 37(3):996--1026, 2005.

\bibitem{Bressan2010}
A.~Bressan, H.~Holden, and X.~Raynaud.
\newblock Lipschitz metric for the {H}unter--{S}axton equation.
\newblock {\em J. Math. Pures Appl.}, 94(1):68--92, 2010.

\bibitem{bressan2007asymptotic}
A.~Bressan, P.~Zhang, and Y.~Zheng.
\newblock Asymptotic variational wave equations.
\newblock {\em Arch. Ration. Mech. Anal.}, 183(1):163--185, 2007.

\bibitem{antonio2019lipschitz}
J.~A. Carrillo, K.~Grunert, and H.~Holden.
\newblock A {L}ipschitz metric for the {H}unter--{S}axton equation.
\newblock {\em Comm. Partial Differential Equations}, 44(4):309--334, 2019.

\bibitem{cieslak2016maximal}
T.~Cie{\'s}lak and G.~Jamr{\'o}z.
\newblock Maximal dissipation in {H}unter--{S}axton equation for bounded energy
  initial data.
\newblock {\em Adv. Math.}, 290:590--613, 2016.

\bibitem{dafermos2011generalized}
C.~M. Dafermos.
\newblock Generalized characteristics and the {H}unter--{S}axton equation.
\newblock {\em J. Hyperbolic Differ. Equ.}, 8(01):159--168, 2011.

\bibitem{dafermos2012maximal}
C.~M. Dafermos.
\newblock Maximal dissipation in equations of evolution.
\newblock {\em J. Differential Equations}, 252(1):567--587, 2012.

\bibitem{gao2021regularity}
Y.~Gao, H.~Liu, and T.~K. Wong.
\newblock Regularity structure of conservative solutions to the
  {H}unter--{S}axton equation.
\newblock {\em SIAM J. Math. Anal.}, 54(1):423--452, 2022.

\bibitem{hunter1991dynamics}
J.~K. Hunter and R.~Saxton.
\newblock Dynamics of director fields.
\newblock {\em SIAM J. Appl. Math.}, 51(6):1498--1521, 1991.

\bibitem{hunter1995nonlinear1}
J.~K. Hunter and Y.~Zheng.
\newblock On a nonlinear hyperbolic variational equation: I. {G}lobal existence
  of weak solutions.
\newblock {\em Arch. Ration. Mech. Anal.}, 129(4):305--353, 1995.

\bibitem{hunter1995nonlinear}
J.~K. Hunter and Y.~Zheng.
\newblock On a nonlinear hyperbolic variational equation: {II}. the
  zero-viscosity and dispersion limits.
\newblock {\em Arch. Ration. Mech. Anal.}, 129(4):355--383, 1995.

\bibitem{zhang1998oscillations}
P.~Zhang and Y.~Zheng.
\newblock On oscillations of an asymptotic equation of a nonlinear variational
  wave equation.
\newblock {\em Asymptot. Anal.}, 18(3-4):307--327, 1998.

\bibitem{zhang1999existence}
P.~Zhang and Y.~Zheng.
\newblock On the existence and uniqueness of solutions to an asymptotic
  equation of a variational wave equation.
\newblock {\em Acta Math. Sin.}, 15(1):115--129, 1999.

\bibitem{zhang2000existence}
P.~Zhang and Y.~Zheng.
\newblock Existence and uniqueness of solutions of an asymptotic equation
  arising from a variational wave equation with general data.
\newblock {\em Arch. Ration. Mech. Anal.}, 155(1):49--83, 2000.

\end{thebibliography}

\end{document}